\newcommand{\Ast}{\mathop{\scalebox{1.9}{\raisebox{1.07ex}{$\ast$}}}}%
\theoremstyle{definition}
\newtheorem{thm}{Theorem}[section]
\newtheorem{lem}[thm]{Lemma}
\newtheorem{propo}[thm]{Proposition}
\newtheorem{cor}[thm]{Corollary}
\newtheorem{defn}[thm]{Definition}
\newtheorem{hyp}[thm]{Hypothesis}
\newtheorem{cons}[thm]{Construction}
\newtheorem{exmp}[thm]{Example}
\newtheorem{rmk}[thm]{Remark}
\DeclarePairedDelimiterX\SET[2]{\{}{\}}{\,#1 \;\delimsize\vert\; #2\,}
\DeclareMathOperator*{\stens}{\text{\raisebox{0.4ex}{\scalebox{0.67}{$\bigotimes$}}}}
\DeclareMathOperator*{\ssum}{\text{\raisebox{0.15ex}{\scalebox{0.78}{$\sum$}}}}
\DeclareMathOperator*{\bsum}{\text{\raisebox{0.15ex}{\scalebox{1.2}{$\sum$}}}}
\DeclareMathOperator*{\soplus}{\text{\raisebox{0.15ex}{\scalebox{0.85}{$\bigoplus$}}}}
\newcommand{\DD}{\mathds{D}}
\newcommand{\FF}{\mathds{F}}
\newcommand{\GG}{\mathds{G}}
\newcommand{\QQ}{\mathds{Q}}
\newcommand{\RR}{\mathds{R}}
\newcommand{\ZZ}{\mathds{Z}}
\newcommand{\NN}{\mathds{N}}
\newcommand{\dA}{\mathds{A}}
\newcommand{\kk}{\kappa}
\newcommand{\cE}{\mathcal{E}}
\newcommand{\cR}{\mathcal{R}}
\newcommand{\cO}{\mathcal{O}}
\newcommand{\cF}{\mathcal{F}}
\newcommand{\cW}{\mathcal{W}}
\newcommand{\cB}{\mathcal{B}}
\newcommand{\cHN}{\mathcal{HN}}
\newcommand{\fg}{\mathfrak{g}}
\newcommand{\fG}{\mathfrak{G}}
\newcommand{\fP}{\mathfrak{P}}
\newcommand{\bB}{\mathbf{B}}
\newcommand{\bv}{\mathbf{v}}
\newcommand{\bw}{\mathbf{w}}
\newcommand{\func}{\rule{2mm}{0.15mm}}
\DeclareMathOperator{\dR}{dR}
\renewcommand{\phi}{\varphi}
\DeclareMathOperator{\muu}{\boldsymbol\mu}
\DeclareMathOperator{\Aut}{Aut}
\DeclareMathOperator{\Diag}{Diag}
\DeclareMathOperator{\Frac}{Frac}
\DeclareMathOperator{\Gal}{Gal}
\DeclareMathOperator{\SL}{SL}
\DeclareMathOperator{\GL}{GL}
\DeclareMathOperator{\Lie}{Lie}
\DeclareMathOperator{\Spec}{Spec}
\DeclareMathOperator{\Hom}{Hom}
\DeclareMathOperator{\id}{\mathds {1}}
\DeclareMathOperator{\Mat}{Mat}
\DeclareMathOperator{\Der}{Der}
\DeclareMathOperator{\sep}{sep}
\DeclareMathOperator{\Ker}{Ker}
\DeclareMathOperator{\nr}{nr}
\DeclareMathOperator{\End}{End}
\DeclareMathOperator{\rk}{rk}
\DeclareMathOperator{\Supp}{Supp}
\DeclareMathOperator{\longto}{\longrightarrow}
\DeclareMathOperator{\la}{\langle}
\DeclareMathOperator{\ra}{\rangle}
\DeclareMathOperator{\fil}{fil}
\DeclareMathOperator{\gr}{gr} 
\newcommand{\lb}{(\!(} 
\newcommand{\rb}{)\!)}
\DeclareMathOperator{\uHom}{\underline{Hom}}
\DeclareMathOperator{\uIsom}{\underline{Isom}}
\DeclareMathOperator{\uAut}{\underline{Aut}}
\DeclareMathOperator{\Ob}{Ob}
\DeclareMathOperator{\Id}{Id} 
\DeclareMathOperator{\Rep}{\mathbf{Rep}}
\DeclareMathOperator{\Vect}{\mathbf{Vec}}
\DeclareMathOperator{\Mod}{\mathbf{Mod}}
\DeclareMathOperator{\Alg}{\mathbf{Alg}} 
\DeclareMathOperator{\forg}{forg}
\DeclareMathOperator{\HN}{HN} 
\DeclareMathOperator{\tR}{\tilde\cR} 
\DeclareMathOperator{\tE}{\tilde\cE}
\DeclareMathOperator{\I}{I}
\DeclareMathOperator{\GphimodR}{G-\mathbf{Mod}^{\phi}_R}
\DeclareMathOperator{\phimodR}{\mathbf{Mod}^{\phi}_R} 
\DeclareMathOperator{\phimodcR}{\mathbf{Mod}^{\phi}_\cR} 
\DeclareMathOperator{\phinabR}{\mathbf{Mod}^{\phi,\nabla}_R}
\DeclareMathOperator{\GphinabR}{G-\mathbf{Mod}^{\phi,\nabla}_R}
\DeclareMathOperator{\phinabcR}{\mathbf{Mod}^{\phi,\nabla}_{\cR}}
\DeclareMathOperator{\Fil}{\mathbf{Fil}} 
\DeclareMathOperator{\Grad}{\mathbf{Grad}}
\DeclareMathOperator{\Frob}{Frob}
\DeclareMathOperator{\dlog}{dlog} 
\DeclareMathOperator{\Ad}{Ad} 
\DeclareMathOperator{\fgl}{\mathfrak{gl}}
\author{Shuyang Ye}
\title{A group-theoretic generalization of the $p$-adic local monodromy theorem}
\begin{document}

\maketitle

\begin{abstract}
Let $G$ be a connected reductive group over a $p$-adic local field $F$. We propose and study the notions of $G$-$\phi$-modules and $G$-$(\phi,\nabla)$-modules over the Robba ring, which are exact faithful $F$-linear tensor functors from the category of $G$-representations on finite-dimensional $F$-vector spaces to the categories of $\phi$-modules and $(\phi,\nabla)$-modules over the Robba ring, respectively, commuting with the respective fiber functors. We study Kedlaya's slope filtration theorem in this context, and show that $G$-$(\phi,\nabla)$-modules over the Robba ring are ``$G$-quasi-unipotent'', which is a generalization of the $p$-adic local monodromy theorem proven independently by Y. Andr\'e, K. S. Kedlaya, and Z. Mebkhout.

\end{abstract}

\setcounter{tocdepth}{1}
{\hypersetup{hidelinks}
\tableofcontents
}

\section{Introduction}

Let $p$ be a prime number and $q$ a power of $p$. Let $K$ be a complete non-archimedean discretely valued field of characteristic $0$ equipped with an isometric automorphism $\phi$, the \emph{Frobenius}, inducing the $q$-power map on the residue field $\kk\supseteq \FF_q$. We also require $K$ to be unramified over the fixed subfield $F$ under $\phi$. See Hypothesis~\ref{H:FK} for a concrete example.

The \emph{Robba ring} $\cR=\cR(K,t)$ is the ring of bidirectional power series $\sum\limits_{i\in\ZZ}c_it^i$ in one variable $t$ with coefficients in $K$ which converge in an annulus $[\alpha,1)$ for some series-dependent $0<\alpha<1$. The Robba ring $\cR$ is endowed with an absolute Frobenius lift $\phi$ which extends the Frobenius on $K$ and lifts the $q$-power map on $\kk \lb t \rb$, and with the derivation $\partial=d/dt$.

A \emph{$(\phi,\nabla)$-module} over $\cR$ is a triple $(M,\Phi,\nabla)$, where $M$ is a finite free $\cR$-module, $\Phi$ is a \emph{Frobenius}, i.e.\ a $\phi$-linear endomorphism of $M$  whose image spans $M$ over $\cR$, and $\nabla\colon M\to M\bigotimes_\cR \cR dt$ is a connection. Moreover, $\Phi$ and $\nabla$ should satisfy the \emph{gauge compatibility condition}, which says that, after choosing an $\cR$-basis for $M$ the actions $\Phi$ and $\nabla$ are given by matrices $A$ and $N$ respectively, and these matrices should satisfy $N=\muu\cdot A(\phi(N))A^{-1}-\partial(A)A^{-1}$, where $\muu\coloneqq \partial(\phi(t))$.

The $(\phi,\nabla)$-modules, also known as the \emph{overconvergent ($\mathrm{F}$-)isocrystals} in the literature, are closely related to $p$-adic local systems on $\Spec \kk \lb t \rb$ (for a summary, we refer to~\cite{ked-iso}), for which the correct monodromy theorem is the \emph{$p$-adic local monodromy theorem} ($p$LMT), proven independently by Andr\'e~\cite{andre}, Kedlaya~\cite{ked-ann} and Mebkhout~\cite{meb}. It states that every $(\phi,\nabla)$-module over $\cR$ is quasi-unipotent. Concretely, a $(\phi,\nabla)$-module $M$ over $\cR$, after an \'etale extension to $\cR_L$ (the Robba ring canonically associated to some finite separable extension $L$ of $\kk \lb t \rb$), admits a filtration by sub-$(\phi,\nabla)$-modules such that the connections induced on the gradiation are trivial. A matricial description of the theorem is given as follows. Let $d$ be the rank of $M$ over $\cR$, and let $A\in \GL_d(\cR)$ (resp.\ $N\in \Mat_{d,d}(\cR)$) be the matrix of $\Phi$ (resp.\ $\nabla$) in some basis. Then there exists $U\in \GL_d(\cR_L)$ such that $U^{-1}NU-U^{-1}\partial(U)$ is an upper-triangular block matrix with zero blocks in the diagonal.

We mention two applications of the $p$LMT in $p$-adic Hodge theory. 
\begin{itemize}
\item[$\bullet$] In~\cite{ber-diff}, Berger associated to every $p$-adic de Rham representation $V$ a $(\phi,\nabla)$-module $\mathrm{N}_{\dR}(V)$ over $\cR$. Using the $p$LMT, he proved the $p$-adic monodromy theorem (previously a conjecture of Fontaine): every $p$-adic de Rham representation is potentially semistable. 
\item[$\bullet$] In~\cite{mar}, Marmora used the $p$LMT to construct a functor from the category of $(\phi,\nabla)$-modules over $\cR$ to that of $K^{\nr}$-valued Weil-Deligne representations of the Weil group $\cW_{\kk \lb t \rb}$, where $K^{\nr}$ is the maximal unramified extension of $K$ in a fixed algebraic closure of $K$.
\end{itemize}

Rather than the general linear group, a Galois representation may take values in some connected reductive group, such as the special linear group or the symplectic group. In order to have appropriate formulations of the above results in this context, it is helpful to establish a $G$-version of the $p$LMT for a connected reductive group $G$, which is the main motivation of our present paper. 

In this paper, we introduce the notion of \emph{$G$-$\phi$-modules over $\cR$} (resp.\ \emph{$G$-$(\phi,\nabla)$-modules over $\cR$}), which are exact faithful $F$-linear tensor functors from the category $\Rep_F(G)$ of $G$-representations on finite-dimensional $F$-vector spaces to the category $\phimodcR$ of $\phi$-modules over $\cR$ (resp.\ to the category $\phinabcR$ of $(\phi,\nabla)$-modules over $\cR$), commuting with the respective fiber functors. These constructions are inspired by that of \emph{$G$-isocrystals} introduced in~\cite[\S~IX. 1]{dat}. Our main result is the following $G$-version of the $p$LMT.

\begin{thm}[Theorem~\ref{T:G-mono}]\label{T':G-mono}
Let $G$ be a connected reductive $F$-group and let $\fg$ be its Lie algebra. If $g\in G(\cR)$ and $X\in \fg\otimes_F\cR$ satisfy the gauge compatibility condition $X=\Gamma_g\big(\mu\phi(X) \big)$, then there exists a finite separable extension $L$ over $\kk \lb t \rb$ and an element $b\in G(\cR_L)$ such that $\Gamma_b(X)\in \Lie \big(U_{G_\cR}(-\lambda_g)\big)\bigotimes_\cR {\cR_L}$.
\end{thm}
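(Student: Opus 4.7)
The plan is to deduce this $G$-theoretic monodromy statement from the classical $p$LMT via a faithful representation of $G$, combined with the $G$-slope filtration machinery developed earlier in the paper. The conceptual point is that the target condition $\Gamma_b(X)\in \Lie\bigl(U_{G_\cR}(-\lambda_g)\bigr)\otimes_\cR \cR_L$ is a $G$-intrinsic reformulation of ``strictly upper triangular with respect to the HN filtration in every representation,'' so the proof splits naturally into (a) producing a quasi-unipotent gauge transformation in the ambient $\GL$ using the classical $p$LMT, and (b) upgrading it to an element of $G(\cR_L)$.

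For (a), I would fix a faithful representation $\rho\colon G\hookrightarrow \GL(V)$ with $\dim_F V=d$. The pair $(\rho(g), d\rho(X))$ satisfies the classical gauge compatibility condition in $\GL_d(\cR)$, so it defines an overconvergent $F$-isocrystal. Applying the $p$LMT of Andr\'e--Kedlaya--Mebkhout produces a finite separable extension $L/\kk\lb t\rb$ and an element $U\in \GL_d(\cR_L)$ such that $U^{-1}d\rho(X)U - U^{-1}\partial(U)$ is strictly upper-triangular with the block structure dictated by the HN slopes of $(V\otimes\cR, \rho(g))$, i.e.\ lives in the Lie algebra of the unipotent radical of the parabolic $P_{\GL_d}(-d\rho(\lambda_g))$.

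For (b), the lifting step, I would combine two inputs. First, the $G$-slope filtration theorem supplies the cocharacter $\lambda_g\colon \GG_m\to G_\cR$ intrinsically and a canonical reduction of structure group of the $G$-$\phi$-module to $P_{G_\cR}(-\lambda_g)$, so the entire HN formalism already lives on the $G$-level. Second, the Tannakian interpretation of $G$-$(\phi,\nabla)$-modules as exact $F$-linear tensor functors $\Rep_F(G)\to \phinabcR$ guarantees that applying $p$LMT in the one representation $\rho$ yields coherent quasi-unipotent forms in every $\rho'\in\Rep_F(G)$, compatible with tensor products, duals, and subquotients. By Tannakian reconstruction, any such natural family of gauge transformations is induced by a single element $b\in G(\cR_L)$. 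Equivalently, one may descend the parabolic reduction obtained in step (a) along $\rho$ using the $G$-reduction already provided by the slope filtration, making the $\GL$-to-$G$ descent automatic.

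The main obstacle is precisely this descent in (b): the classical $p$LMT is insensitive to $G$-structure and produces only a matrix $U\in\GL_d(\cR_L)$, so one must leverage the Tannakian naturality of the HN filtration together with the $G$-slope filtration to rigidify $U$ into a $G$-valued gauge. Once such $b\in G(\cR_L)$ has been produced, the conclusion $\Gamma_b(X)\in \Lie\bigl(U_{G_\cR}(-\lambda_g)\bigr)\otimes_\cR \cR_L$ can be verified by checking in a single faithful $\rho$, where $d\rho\bigl(\Gamma_b(X)\bigr)$ is strictly upper triangular with the prescribed block pattern and hence lies in $\Lie\bigl(\rho(U_{G_\cR}(-\lambda_g))\bigr)$; faithfulness of $\rho$ then yields the statement.
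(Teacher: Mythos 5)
Your step (a) is fine, and the broad template (faithful representation, Tannakian duality) is in the spirit of the paper, but step (b) has a genuine gap, and it is precisely the step the paper has to work hard for.

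The claim that ``applying $p$LMT in the one representation $\rho$ yields coherent quasi-unipotent forms in every $\rho'\in\Rep_F(G)$, compatible with tensor products, duals, and subquotients'' is false as stated. The $p$LMT applied to $(\rho(g),d\rho(X))$ outputs a single matrix $U\in\GL_d(\cR_L)$ with no reason to lie in $\rho(G)(\cR_L)$ and no canonical way to propagate to other representations; it is not a natural transformation of anything. Tannakian reconstruction (Theorem~\ref{T:recover}) requires as input a \emph{compatible family} of $S$-linear automorphisms $\{\lambda_V\}_{V\in\Rep_F(G)}$ satisfying the tensor and naturality axioms, and producing such a family is exactly the hard part. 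Nothing in your proposal produces it.

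The paper's actual route to $b\in G(\cR_L)$ is structurally different. First, one reduces to a unit-root situation on each graded piece via the splitting $(z,X_0)$ of the slope filtration (Propositions~\ref{P:z}, \ref{P:X_0}, Corollary~\ref{C:unit-root}); this step uses the $\ZZ$-slope morphism $\lambda_g$ and a pushforward-and-twist that you do not invoke. Second, one applies the \emph{unit-root} $p$LMT graded-piece by graded-piece to show that the space of horizontal sections $(V_{\cB_0})^{\Theta_{X_0}=0}$ has the correct $K^{\nr}$-dimension $\dim_F V$ for \emph{every} $V$, so that $V\mapsto (V_{\cB_0})^{\Theta_{X_0}=0}$ is a genuine fiber functor $\omega_2\colon\Rep_F(G)\to\Vect_{K^{\nr}}$. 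Third, and crucially, $\uIsom^\otimes(\omega^G\otimes K^{\nr},\omega_2)$ is a $G_{K^{\nr}}$-torsor, and its \emph{triviality} is a nontrivial Galois-cohomological fact: it uses Steinberg's theorem $H^1(K^{\nr},G)=1$, which requires $G$ connected reductive and $K^{\nr}$ of cohomological dimension $\leq 1$. This is where the hypothesis ``connected reductive'' earns its keep, and it is entirely absent from your argument — without it there is no reason a $G$-valued gauge should exist. Finally, one descends from $\cB_0=\varinjlim_L\cR_L$ to some finite level $\cR_L$ using that $F[G]$ is of finite type. Your proposal needs to be rebuilt around the two-fiber-functor torsor argument and Steinberg's theorem; the ``descent along $\rho$'' you gesture at does not exist as a formal mechanism.
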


Here, $\Gamma_y(Y)=\Ad(y)(Y)-\dlog(y)$ for all $y\in G(\cR)$ and $Y\in \fg\otimes_F \cR$, and $\lambda_g \colon \GG_{m,\cR} \to G_\cR$  is a cocharacter associated to $g$ whose inverse is denoted by $-\lambda_g$. For example, $\Ad(y)(Y)=yYy^{-1}$ and $\dlog(y)=\partial(y)y^{-1}$, when $G=\GL_d$. In this context, $U_{G_\cR}(-\lambda_g)$ denotes the unipotent radical of the parabolic subgroup of $G_\cR$ associated to $-\lambda_g$.

When $G=\GL_d$, $g$ (resp.\ X) should be thought as the matrix of the Frobenius (resp.\ the matrix of the connection), and $\Gamma_b(\func)$ as the matrix of a connection under the change-of-basis via $b^{-1}$, in particular, the gauge compatibility condition coincides with the matricial one given before. Moreover, $\Lie \big(U_{G_\cR}(-\lambda_g)\big)\stens_\cR {\cR_L}$ consists of upper-triangular matrices over $\cR_L$ with zero-blocks (of certain sizes) in the diagonal. As such, Theorem~\ref{T':G-mono} recovers the matricial $p$LMT described above.

In Proposition~\ref{P:B}, we show that $G$-$(\phi,\nabla)$-modules over $\cR$ are indeed pairs $(g,X)$ subject to the gauge compatibility condition in the theorem. In this sense, the theorem can be interpreted as saying that $G$-$(\phi,\nabla)$-modules over $\cR$ are ``$G$-quasi-unipotent".

Our approach to the theorem closely follows that of the $p$LMT in~\cite{ked-ann} for absolute Frobenius lifts, wherein the author used his slope filtration theorem (along with applying the pushforward functor and twisting to each quotient of the filtration) to reduce the problem to the unit-root case, and then apply the unit-root $p$LMT attributed to Tsuzuki~\cite{tsu-mono} to finish. More precisely, we use Kedlaya's slope filtration theorem to construct a $\QQ$-filtered fiber functor $\HN_g$ from $\Rep_F(G)$ to  $ \QQ$-$\Fil_\cR$, the category of $\QQ$-filtered modules over $\cR$ (see Theorem~\ref{T:filtered-fiber}). We then reduce $\HN_g$ to a $\ZZ$-filtered fiber functor $\HN_g$ from $\Rep_F(G)$ to $\ZZ$-$\Fil_\cR$, the category of $\ZZ$-filtered modules over $\cR$ (see Lemma~\ref{L:Q-to-Z}). Then a result of Ziegler (Theorem~\ref{T:ziegler}) immediately implies that $\HN_g^\ZZ$ is \emph{splittable}, i.e.\ factors through a $\ZZ$-graded fiber functor (see Proposition~\ref{P:Z-splitting}). In particular, for any splitting of $\HN_g^\ZZ$, we construct a morphism $\lambda_g\colon \GG_{m,\cR} \to G_\cR$ of $\cR$-groups in \S~\ref{S:slope-morphism}, which is called the \emph{$\ZZ$-slope morphism} of $g$. With this, we can reduce the $G$-$(\phi,\nabla)$-module $(g,X)$ over $\cR$, involving the (generalized) pushforward functor and twisting, to a unit-root one (see Corollary~\ref{C:unit-root}). Theorem~\ref{T':G-mono} then follows from the unit-root $p$LMT and a tannakian argument.

The paper is organized as follows. In Section~\ref{S:pre}, we set up basic notation and conventions, and then recall some necessary background on the theory of slopes and tannakian formalism. In Section~\ref{S:G-phi}, we study $G$-$\phi$-modules over the Robba ring, and construct slope morphisms. In Section~\ref{C:G-phi-nabla}, we consider $G$-$(\phi,\nabla)$-modules over the Robba ring, and prove our main result, Theorem~\ref{T':G-mono}, in the last subsection.

\subsection*{Acknowledgement}
The content of this paper is part of the author's Ph.D. thesis carried out at Humboldt-Universit\"at zu Berlin. The author owes a deep gratitude to his supervisor Elmar Gro\ss e-Kl\"onne for providing him this problem, and for all the helpful discussions. The author would like to thank the external examiners of the thesis for their valuable feedback.  The author is also indebted to Claudius Heyer for many constructive suggestions.

\section{Preliminaries}\label{S:pre}

\subsection{Notation and conventions}

When $k$ is a field, we denote by $\Vect_k$ the category of finite-dimensional $k$-vector spaces. When $R\in$ is a $k$-algebra\footnote{By an algebra, we always mean a commutative algebra with $1$.}, we denote by $\Mod_R$ the category of $R$-modules, and by $\Alg_R$ the category of $R$-algebras. When $V,W \in \Vect_k$, we write $V_R$ for $V\stens_k R$, and write $\alpha_R\coloneqq \alpha\otimes R$, the $R$-linear extension of $\alpha$, for all $k$-linear maps $\alpha\colon V\to W$. When $G$ is an affine algebraic $k$-group, we denote by $k[G]$ the Hopf algebra of $G$, by $G_R\coloneqq G \times_{\Spec k} \Spec R$ the base extension, by $H^1(G,k)\coloneqq H^1 \big(\Gal(k^{\sep}/k),G(k^{\sep}) \big)$ the first Galois cohomology set, and by $\Rep_k(G)$ the category of representations of $G$ on finite-dimensional $k$-vector spaces. We denote by $\omega^G$ the (forgetful) fiber functor $\Rep_k(G)\to \Vect_k$.

By a reductive $k$-group, we mean a (not necessarily connected) affine algebraic $k$-group $G$ such that every smooth connected unipotent normal subgroup of $G_{\bar k}$ is trivial, where $\bar k$ is an algebraic closure of $k$.

For the rest of this paper, we work under the following hypothesis. 

\begin{hyp}\label{H:FK}
Let $p$ be a prime number and $q=p^f$ an integral power of $p$. Let $F$ be a finite extension of $\QQ_p$ with the ring of integers $\cO_F$, a fixed uniformizer $\pi_F$ and the residue field $\kk_F$ of $q$ elements. Let $\kk$ be a perfect field containing $\kk_F$. Let $\cO_K=\cO_F\stens_{W(\kk_F)} W(\kk)$, where $W(\kk_F)$ (resp.\ $W(\kk)$) denotes the ring of Witt vectors with coefficients in $\kk_F$ (resp.\ in $\kk$). Then $K\coloneqq \Frac(\cO_K)\cong F\stens_{W(\kk_F)} W(\kk)$ is a complete discretely valued field with ring of integers $\cO_K$, a uniformizer $\pi\coloneqq \pi_F\otimes 1$ and residue field $\kk$. Let $\Frob$ be the ring endomorphism of $W(\kk)$ induced by the $p$-power map on $\kk$, and let
\[\phi\coloneqq \Id_F\otimes \Frob^f \colon K\longto K\]
be the \emph{Frobenius automorphism} on $K$ relative to $F$. Then $\phi$ reduces to the $q$-power map on $\kk$, and the fixed field of $\phi$ on $K$ is $F\stens_{W(\kk_F)} W(\kk_F) \cong F$.
\end{hyp}

\subsection{The Robba ring and its variants}\label{S:robba}

For $\alpha\in(0,1)$, we put
\[\cR_\alpha\coloneqq \Big\{\ssum\limits_{i\in\ZZ} c_it^i~\Big|~\ c_i\in K, \lim\limits_{i\to\pm\infty} |c_i|\rho^i=0,~\forall\rho\in[\alpha,1) \Big\}.\]
For any $\rho\in[\alpha,1)$, we define the $\rho$-Gauss norm on $\tilde{\cR}_\alpha$ by setting $\big|\sum\limits_i c_it^i \big|_\rho \coloneqq\sup_i\{|c_i|\rho^i\}$.
The \emph{Robba ring} is defined to be the union $\cR \coloneqq \cR(K,t)\coloneqq \bigcup\limits_{\alpha\in(0,1)}\cR_\alpha$. For any $\sum\limits_i c_it^i\in\cR$, we define $\big|\sum\limits_i c_it^i\big|_1:=\sup_i\{|c_i|\}\in \RR_{\geq 0}\cup \{\infty\}$, the $1$-Gauss norm.

The \emph{bounded Robba ring} $\cE^\dagger=\cE^\dagger(K,t)$ is the subring of $\cR$ consisting of bounded elements (i.e.\ elements with finite $1$-Gauss norm), which is actually a henselian discretely valued field w.r.t.\ the $1$-Gauss norm with residue field $\kk \lb t\rb$.

Let $R\in\{\cR, \cE^\dagger\}$. An \emph{absolute $q$-power Frobenius lift} on $R$ is a ring endomorphism $\phi \colon R\to R$ given by $\sum\limits_{i\in \ZZ} c_it^i\longmapsto \sum\limits_{i\in \ZZ} \phi(c_i)u^i$.

For any $\alpha\in(0,1)$, we define $\tilde{\cR}_\alpha$ to be the ring of formal sums $\ssum\limits_{i\in\QQ} c_it^i$ with $c_i\in K$, subject to the following properties.
\begin{itemize}
\item  For any $c>0$, the set $\{i\in\QQ\mid |c_i|\geq c\}$ is well-ordered.
\item For any $\rho\in[\alpha,1)$, we have $\lim\limits_{i\to\pm\infty} |c_i|\rho^i=0$.
\end{itemize}
For any $\rho\in[\alpha,1)$, we define the $\rho$-Gauss norm on $\tilde{\cR}_\alpha$ by setting
\[\Big|\ssum\limits_i c_it^i \Big|_\rho=\sup\limits_{i\in\QQ}\{|c_i|\rho^i\}.\]
We define $\tilde{\cR} \coloneqq \tilde{\cR}(K,t)=\bigcup\limits_{\alpha\in(0,1)} \tilde{\cR}_\alpha$, the \emph{extended Robba ring}. The \emph{absolute Frobenius lift} on $\tilde{\cR}$ is a ring automorphism on $\tR$ given by $\sum\limits_{i\in\QQ} c_it^i \mapsto \sum\limits_{i\in\QQ} \phi
(c_i)t^{iq}$. We denote by $\tE^\dagger$ the subring of $\tR$ consisting of bounded elements. By~\cite[Proposition 2.2.6]{ked-relative}, we have a $\phi$-equivariant embedding $\psi:\cR\to\tilde{\cR}$ such that $|\psi(x)|_\rho=|x|_\rho$ for $\rho$ sufficiently close to $1$.

\subsection{The slope filtration theorem}\label{S:slope}

We recall Kedlaya's theory of slopes. Let $R\in\{\cE^\dagger,\cR,\tE^\dagger,\tR\}$ equipped with a Frobenius lift $\phi$. For the notions of $\phi$-modules and $(\phi,\nabla)$-modules over $R$, we refer to~\cite[\S 2.5]{ked-ann}. We denote by $\phimodR$ (resp.\ $\phinabR$) the category of $\phi$-modules (resp.\ $(\phi,\nabla)$-modules) over $R$.

Let $(M,\Phi)\in \phimodR$ and let $n$ be a positive integer. Then $(M,\Phi^n)$ is a $\phi^n$-module over $(R,\phi^n)$. The $n$-\emph{pushforward functor} is given by
\[[n]_*\colon \phimodR \longto \mathbf{Mod}^{\phi^n}_R, \;\;\;\;\; (M,\Phi) \longmapsto (M,\Phi^n).\]
For any $s\in \ZZ$, we define the \emph{twist} $M(s)$ of $(M,\Phi)$ by $s$ to be the $\phi$-module $(M,\pi^s\Phi)$. Now let $M$ be a $\phi$-module over $R$ of rank $d$.
\begin{itemize}
\item[(i)] We say that $M$ is \emph{unit-root} $\phi$-module if there exists a basis $\bv_1,\cdots,\bv_d$ of $M$ over $R$ in which $\Phi$ acts via an invertible matrix in $\GL_d(\cO_{\cE^\dagger})$ if $R\in \{\cE^\dagger,\cR\}$, or $\GL_d(\cO_{\tE^\dagger})$ if $R\in\{\tE^\dagger,\tR\}$.
\item[(ii)] Let $\mu=s/r \in\QQ$ with $r>0$ and $(s,r)=1$. We say that $M$ is \emph{pure of slope} $\mu$ if $([r]_*M)(-s)$ is unit-root.
\end{itemize}

Let $M$ be a $\phi$-module over $\cR$. By Kedlaya's \emph{slope filtration theorem} on $\phi$-modules (\cite[Theorem 6.10]{ked-ann}). We have a canonical filtration $0=M_0 \subseteq M_1 \subseteq \cdots \subseteq M_l=M$ of sub-$\phi$-modules over $\cR$ such that each quotient $M_i/M_{i-1}$ is pure of some slope $\mu_i$ with $\mu_1< \cdots <\mu_l$, which is called the \emph{slope filtration} of $M$. We call $\mu_1,\cdots,\mu_l$ the \emph{jumps} of the slope filtration. The (uniquely determined, not neccesarily strictly) increasing sequence $(\mu_1,\cdots,\mu_1,\cdots,\mu_l,\cdots,\mu_l)$, with each $\mu_i$ appearing $\rk_\cR(M_i/M_{i-1})$ times, is said to be the \emph{Newton slope sequence} for $M$. We call $\rk_\cR(M_i/M_{i-1})$ the \emph{multiplicity} of $\mu_i$ for all $1\leq i\leq l$. Moreover, if $M$ is a $(\phi,\nabla)$-module over $\cR$, then the slope filtration can be refined to a filtration of sub-$(\phi,\nabla)$-modules. This is~\cite[Theorem 6.12]{ked-ann}, and is referred to the \emph{slope filtration theorem for $(\phi,\nabla)$-modules}.

We next recollect some results on the theory of slopes for later use.

\begin{lem}\label{L:nonzero-morphism}
Let $R\in \{\cR,\tR\}$ and let $M$ and $N$ be $\phi$-modules over $R$. If the slopes of $M$ are all less than the smallest slope of $N$, then no non-zero morphism from $M$ to $N$ exists.
\end{lem}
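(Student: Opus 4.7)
A morphism $M \to N$ of $\phi$-modules over $R$ corresponds bijectively to a $\phi$-fixed vector in the internal Hom $P \coloneqq M^\vee \otimes_R N$, equipped with its natural $\phi$-module structure; so the plan is to show that $P$ has no nonzero $\phi$-fixed vector under the given slope hypothesis. By the slope calculus for $\phi$-modules over $R$ (additivity under tensor product, negation under duality, both standard in Kedlaya's theory over $\cR$ and $\tR$), the Newton slopes of $P$ are precisely the differences $\nu - \mu$ for $\mu$ a slope of $M$ and $\nu$ a slope of $N$, and by hypothesis every such difference is strictly positive.

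Next, I apply the slope filtration theorem \cite[Theorem 6.10]{ked-ann} to $P$ to obtain a filtration $0 = P_0 \subseteq P_1 \subseteq \cdots \subseteq P_l = P$ by sub-$\phi$-modules whose successive quotients $P_i/P_{i-1}$ are pure of strictly positive slopes $\mu_i$. Since the functor of $\phi$-invariants is left exact, a straightforward induction on the length $l$ reduces the problem to the following claim: a pure $\phi$-module $P'$ of slope $\lambda > 0$ admits no nonzero $\phi$-fixed vector.

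For this final point, suppose $v \in P'$ is nonzero with $\Phi(v) = v$. Then the $R$-span $Rv \subseteq P'$ is a rank-$1$ sub-$\phi$-module, and the assignment $r \mapsto rv$ is an isomorphism of $\phi$-modules from the trivial (slope $0$) $\phi$-module $R$ onto $Rv$. But sub-$\phi$-modules of a pure-of-slope-$\lambda$ $\phi$-module are themselves pure of slope $\lambda$ (a standard consequence of Kedlaya's slope formalism), which forces $0 = \lambda$ and contradicts $\lambda > 0$. The most delicate point in this plan is this last invariance of slopes under passage to sub-$\phi$-modules in the pure case; if needed one can make the contradiction concrete by applying a suitable combination of pushforward $[r]_\ast$ and twist to reduce $P'$ to a unit-root $\phi$-module, where the non-existence of $\phi$-fixed vectors follows from direct estimates in the $1$-Gauss norm.
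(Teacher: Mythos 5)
The paper does not give an argument of its own for this lemma: the proof in the text is a direct citation of Kedlaya's Proposition~1.4.18 in \cite{ked-relative}, so there is no in-paper proof to compare with. Your proposal essentially reconstructs the proof of that cited result, and it is correct: identify $\Hom_{\phi}(M,N)$ with the $\phi$-fixed vectors of the internal Hom $P=M^\vee\otimes_R N$, compute via additivity/negation of slopes under tensor and duality that all slopes of $P$ are strictly positive, pass through the slope filtration of $P$, and reduce (by left-exactness of $\phi$-invariants and induction on the length) to showing that a pure $\phi$-module of slope $\lambda>0$ has no nonzero $\phi$-fixed vector. One small remark on the last step: the parenthetical claim that every sub-$\phi$-module of a pure-of-slope-$\lambda$ module is again pure of slope $\lambda$ is true (one sees it for $\tR$ from the Dieudonn\'e--Manin decomposition over a strongly difference-closed extension and then descends to $\cR$), but it is slightly more than you need; for the contradiction it suffices that a nonzero sub-$\phi$-module of a pure slope-$\lambda$ object has all slopes $\geq\lambda$ (semistability), while your rank-$1$ sub $Rv\cong(R,\phi)$ has slope $0$. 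Your proposed fallback, applying the pushforward $[r]_*$ and a twist to make $P'$ unit-root and then arguing via the $1$-Gauss norm, is precisely how the cited reference settles this final point, so the plan as written is sound.
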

\begin{proof}
This is~\cite[Proposition 1.4.18]{ked-relative}.
\end{proof}

\begin{lem}[{\cite[Lemma 1.5.3]{liu-families}}]\label{L:admissible}
The field $K$ admits an admissible extension $E$ such that the residue field $\kk_E$ of $E$ is strongly difference-closed.
\end{lem}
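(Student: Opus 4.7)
The plan is to build $E$ in two stages: first construct a strongly difference-closed residue field extension $\kk_E$ of $\kk$, then lift it to a complete discretely valued extension of $K$ to which the Frobenius $\phi$ extends isometrically.

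For the first stage, I would obtain $\kk_E$ by transfinite iteration. Starting from $\kk_0\coloneqq \kk^{\alg}$, at each successor stage $\alpha+1$ I adjoin, for every nontrivial twisted polynomial $P(x)=\sum_{i=0}^n a_i x^{q^i}$ with coefficients in $\kk_\alpha$ and every $b\in \kk_\alpha$, a root of the equation $P(x)=b$; at limit stages I take unions. Each adjoined element is algebraic of degree at most $q^n$ over the previous stage, so the cardinality grows in a controlled manner. A standard cardinality argument shows that after sufficiently many steps the process stabilizes: at the stabilization ordinal, every twisted polynomial over $\kk_E$ has its coefficients already in some earlier $\kk_\alpha$, hence admits a root in $\kk_E$. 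This yields a field $\kk_E$ that is algebraically closed (containing $\kk^{\alg}$) and satisfies every nontrivial twisted polynomial equation over itself; in particular it is perfect, so the $q$-power map is an automorphism.

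For the second stage, I follow the recipe of Hypothesis~\ref{H:FK}: set $\cO_E\coloneqq \cO_F\otimes_{W(\kk_F)} W(\kk_E)$ and $E\coloneqq \Frac(\cO_E)$. Functoriality of Witt vectors extends the $q$-power map on $\kk_E$ to a ring automorphism of $E$ fixing $F$ pointwise, which restricts to the original $\phi$ on $K$. The inclusion $K\hookrightarrow E$ is then an isometric $\phi$-equivariant embedding of complete discretely valued fields inducing the extension $\kk\hookrightarrow \kk_E$ on residue fields. Since $E/K$ is unramified, the associated Robba-ring inclusion $\cR\hookrightarrow \cR_E$ is faithfully flat and preserves Gauss norms; this is precisely the notion of admissible extension used in Kedlaya's slope theory.

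The main obstacle is ensuring the closure step for $\kk_E$: one must rule out the possibility that new twisted equations, built from elements adjoined at late stages, forever outrun the iteration. The cardinality-bound argument sketched above resolves this, using the fact that only algebraic elements are adjoined at each step, so the total cardinality remains controlled by $\max\{|\kk|,\aleph_0\}$ and a fixed point is reached below the next uncountable regular cardinal. The admissibility verifications for $E/K$ are then formal and parallel the familiar unramified-extension facts for Witt vectors.
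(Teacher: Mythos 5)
The paper does not actually prove this lemma; it is taken over verbatim as a citation to Liu's work, namely \cite[Lemma 1.5.3]{liu-families}, and the text explicitly declines to recall the definitions of ``admissible extension'' and ``strongly difference-closed,'' pointing the reader to loc.\ cit.\ and to \cite{ked-relative}. So there is no internal argument against which to compare your proposal; you are attempting to reconstruct Liu's proof.

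As a reconstruction, your outline is in the right general spirit (close up the residue field by transfinite iteration, then lift via Witt vectors following Hypothesis~\ref{H:FK}), but it rests on a guessed definition of ``strongly difference-closed'' that is, in all likelihood, too weak. The condition used in the slope theory here is not merely that every nonzero additive twisted polynomial $\sum a_i x^{q^i}=b$ has a root; it is the triviality of all \'etale $\phi$-modules over $\kk_E$, i.e.\ the vanishing of $H^1$ of $\phi$ acting on $\GL_n(\kk_E)$ for every $n$. For $n=1$ this reduces to a polynomial equation, but for $n>1$ it is a genuinely matricial statement (a Lang-type surjectivity), and your iteration, which only adjoins roots of scalar twisted polynomials, does not obviously produce a field satisfying it. The closure target of the transfinite induction therefore needs to be chosen against the correct definition; as written, the iteration may stabilize at a field that is algebraically closed and additively difference-closed yet still fails the matricial condition, unless one additionally invokes a Lang-theorem argument valid over arbitrary algebraically closed fields of characteristic $p$ equipped with the $q$-power map, which you do not address. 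Similarly, ``admissible'' in Liu's sense packages nontrivial compatibilities for the associated (extended) Robba rings, including faithful flatness of $\cR\hookrightarrow\tR(E,t)$ and control of Gauss norms; the paper cites these as consequences (\cite[Remark 3.5.3]{ked-relative}), and they are not the ``formal'' verifications you describe. In short: your two-stage architecture is plausible, but the key step, identifying the correct closure condition and proving that your iteration achieves it, is missing, and without the precise definitions the sketch cannot be checked.
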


We need only the following consequences of the existence of such an $E$; the notion of \emph{admissible extensions} or \emph{strong difference-closeness} will not be explicitly used in this paper, for which we refer to loc.\ cit.. See also~\cite{ked-relative} (see in particular Hypothesis 2.1.1 for the condition of being strongly difference-closed).

\begin{lem}\label{L:tilde-tensor}
Let $E$ be an admissible extension of $K$ such that $\kk_E$ is strongly difference-closed.
\begin{itemize}
\item[(i)] Let $M \in \phimodcR$. If $M$ is pure of some slope $\mu$, then $M \stens_\cR \tilde\cR(E,t)$ is pure of slope $\mu$. 
\item[(ii)] Let $M \in \phimodcR$. Then tensoring the slope filtration of $M$ with $\tilde\cR(E,t)$ gives the slope filtration of $M\stens_{\cR}\tilde\cR(E,t)$.
\item[(iii)] Let $0 \longto M_1 \longto  M \longto M_2 \longto 0$
be a short exact sequence of $\phi$-modules over $\tilde\cR(E,t)$ such that the slopes of $M_1$ are all less than the smallest slope of $M_2$. Then the sequence splits.
\item[(iv)] Every $\phi$-module over $\tilde\cR(E,t)$ admits a \emph{Dieudonn\'e-Manin decomposition},~i.e.\ is a direct sum of standard $\phi$-submodules.
\end{itemize}
\end{lem}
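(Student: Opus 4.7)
The plan is to treat the four parts in the order (iv), (i), (ii), (iii), since (iv) is the deepest input and the remaining items can be deduced from it together with Lemma~\ref{L:nonzero-morphism}. For (iv), I would appeal directly to the Dieudonn\'e--Manin classification over the extended Robba ring with strongly difference-closed residue field, as established in Kedlaya's relative slope theory~\cite{ked-relative}. The point is that strong difference-closedness of $\kk_E$ is precisely the hypothesis needed so that the semilinear eigenvector equations $\Phi \bv = c \bv$ attached to each prospective slope admit enough solutions to decompose the Frobenius action into standard pure blocks.

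For (i), the natural approach is to extract a $\phi$-stable lattice witnessing pureness of $M$ over $\cR$ and extend it along the embedding $\cR \hookrightarrow \tilde\cR(E,t)$, appealing to the invariance of Newton polygons under admissible base change (again, as in~\cite{ked-relative}). Part (ii) then follows quickly: the slope filtration over $\cR$ is characterized by the property that its successive quotients are pure with strictly increasing slopes, and by (i) this property is preserved after tensoring with $\tilde\cR(E,t)$. By the uniqueness of the slope filtration, the base-changed filtration coincides with the slope filtration of $M \stens_\cR \tilde\cR(E,t)$.

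For (iii), I would first apply (iv) to decompose $M = \soplus_\mu M^{(\mu)}$ into isotypic pure-slope components. Let $\mu_0$ denote the smallest slope of $M_2$. For any $\mu \geq \mu_0$, the composition $M_1 \hookrightarrow M \twoheadrightarrow M^{(\mu)}$ has source whose slopes are all strictly less than $\mu$, so it must vanish by Lemma~\ref{L:nonzero-morphism}. Hence $M_1 \subseteq \soplus_{\mu < \mu_0} M^{(\mu)}$; since Newton polygons are additive along short exact sequences, a rank count upgrades this inclusion to an equality, and the inclusion of the complementary summand $\soplus_{\mu \geq \mu_0} M^{(\mu)} \hookrightarrow M$ provides the desired splitting of $M \twoheadrightarrow M_2$.

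The principal obstacle is (iv), which is the substantive technical input; granting it, parts (i)--(iii) become essentially formal consequences of slope invariance under admissible base change and the non-existence of cross-slope morphisms supplied by Lemma~\ref{L:nonzero-morphism}. The hypotheses that $E$ is admissible over $K$ and that $\kk_E$ is strongly difference-closed are arranged precisely so that the relative Dieudonn\'e--Manin decomposition, and hence (iv), goes through over $\tilde\cR(E,t)$.
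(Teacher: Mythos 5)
Your proof is essentially correct and, for parts (i), (ii), and (iv), matches the paper's approach in spirit: the paper cites \cite[Theorem 3.1.3]{ked-relative} for (i), derives (ii) by applying the pure-preservation statement and then identifying the resulting filtration, and cites \cite{liu-families} (Proposition 1.5.12) for (iv). The genuine departure is part (iii): the paper simply cites \cite[Proposition 1.5.11]{liu-families}, whereas you derive it from (iv) and Lemma~\ref{L:nonzero-morphism}. This is a perfectly reasonable, more self-contained route, and it nicely explains \emph{why} (iii) should be true rather than black-boxing it.

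One step in your argument for (iii) deserves more care. You assert that since Newton polygons are additive across short exact sequences, a rank count upgrades the inclusion $M_1 \subseteq \soplus_{\mu < \mu_0} M^{(\mu)}$ to equality. Over a general ring, an inclusion of free modules of the same rank is not automatically an equality; you need the inclusion to be saturated. Here it is, but the reason should be made explicit: $M_2 = M/M_1$ is itself a $\phi$-module, hence free (and in particular torsion-free), so $M_1$ is saturated in $M$, hence also saturated in the direct summand $\soplus_{\mu < \mu_0} M^{(\mu)}$. Combined with the rank equality and the fact that $\tR(E,t)$ is a B\'ezout domain, this forces $M_1 = \soplus_{\mu < \mu_0} M^{(\mu)}$, after which the complementary summand indeed provides the section of $M \twoheadrightarrow M_2$. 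With that wrinkle ironed out, your argument is sound. Finally, your phrasing of (ii) (apply (i) to the graded pieces of the slope filtration over $\cR$, then invoke uniqueness of the slope filtration over $\tR(E,t)$) is cleaner and more transparent than the paper's somewhat compressed wording, which routes through semistability of $M\otimes_\cR\tR(E,t)$ and purity via Theorem~2.1.8 of \cite{ked-relative}.
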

\begin{proof}
Assertion (i) is immediate from ~\cite[Theorem 3.1.3]{ked-relative}. For assertion (ii), we let $M$ be a $\phi$-module over $\cR$. Then $M \otimes_{\cR} \tilde\cR(E,t)$ is also semistable by~\cite[Theorem 3.1.2]{ked-relative}. Since $\kk_E$ is strongly difference-closed by assumption, we have that $M \otimes_{\cR} \tilde\cR(E,t)$ is pure of some slope by~\cite[Theorem 2.1.8]{ked-relative}. It follows from assertion (i) that $M$ is pure of the same slope, assertion (ii) then follows. Assertion (iii) is~\cite[Proposition 1.5.11]{liu-families}, and Assertion (iv) is Proposition 1.5.12 in loc.\ cit..

\end{proof}

\subsection{The tannakian duality}
In this subsection, $k$ denotes a field.
We follow the definitions and notations in~\cite{tc}.The following \emph{tannakian duality} will be repeatedly used in this paper, whose proof can be found, e.g.\ in~\cite[Theorem 9.2]{lag}.

\begin{thm}\label{T:recover}
Let $G$ be an affine algebraic $k$-group and let $R\in\Alg_k$. Suppose that for any $(V,\rho_V)\in\Rep_k(G)$ we are given an $R$-linear map $\lambda_V\colon V_R\to V_R$. If the family $\{\lambda_V\mid (V,\rho_V)\in \Rep_k(G)\}$ satisfies
\begin{itemize}
\item[(i)] $\lambda_{V\stens W}=\lambda_V\otimes\lambda_W$ for all $V,W\in\Rep_k(G)$;
\item[(ii)] $\lambda_{\id}$ is the identity map where $\id$ is the trivial representation on $k$;
\item[(iii)] for all $G$-equivariant maps $\alpha\colon V\to W$, we have $\lambda_W\circ \alpha_R=\alpha_R\circ\lambda_V$.
\end{itemize}
Then there exists a unique $g\in G(R)$ such that $\lambda_V=\rho_V(g)$ for all $V$.
\end{thm}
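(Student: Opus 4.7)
The plan is to interpret the family $\lambda := \{\lambda_V\}_{V\in\Rep_k(G)}$, subject to hypotheses (i)--(iii), as a tensor natural endomorphism of the forgetful fiber functor $\omega^G$ base-changed to $R$, and then to invoke the Tannakian reconstruction of the affine algebraic $k$-group $G$ from its category of finite-dimensional representations.

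For uniqueness, I would use that $G$ is affine algebraic over $k$, so it admits a faithful finite-dimensional representation $(W,\rho_W)$; thus $\rho_W\colon G\hookrightarrow \GL(W)$ is a closed immersion. If $g_1, g_2 \in G(R)$ both satisfy $\rho_V(g_i) = \lambda_V$ for every $V$, then specializing to $V=W$ gives $\rho_{W,R}(g_1) = \lambda_W = \rho_{W,R}(g_2)$, forcing $g_1 = g_2$.

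For existence, the strategy is to construct a $k$-algebra homomorphism $\varphi\colon k[G]\to R$ and take $g \in G(R) = \Hom_{\Alg_k}(k[G], R)$ to be the corresponding point. Every $f\in k[G]$ can be realized as a matrix coefficient of some representation: there exists a triple $(V, v, \ell)$ with $V\in\Rep_k(G)$, $v\in V$, $\ell\in V^\vee$ such that $f(h) = \ell(\rho_V(h)v)$ for every $S\in \Alg_k$ and every $h\in G(S)$. Define
\[\varphi(f) \coloneqq \ell_R\big(\lambda_V(v\otimes 1)\big) \in R.\]
Three things require verification: (a) $\varphi(f)$ is independent of the chosen triple $(V, v, \ell)$ representing $f$; (b) $\varphi$ is multiplicative; (c) $\varphi(1) = 1$. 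Item (a) follows from condition (iii) by a diagram chase comparing two presentations of $f$ via $G$-equivariant maps between the relevant representations (in effect, exploiting the presentation of $k[G]$ as matrix coefficients modulo the $G$-equivariance relations). Item (b) follows from (i), since the product $f_1 f_2$ of matrix coefficients of $(V_i, v_i, \ell_i)$ is the matrix coefficient of $(V_1\otimes V_2,\, v_1\otimes v_2,\, \ell_1\otimes \ell_2)$. Item (c) follows from (ii).

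Once $\varphi$ is established as a $k$-algebra homomorphism, the associated $g \in G(R)$ satisfies $\rho_V(g)(v\otimes 1) = \lambda_V(v\otimes 1)$ for every triple $(V, v, \ell)$ by construction, and since such triples span $V_R$ as $(v, \ell)$ varies, this yields $\rho_V(g) = \lambda_V$ for all $V$. The main obstacle is the well-definedness in (a): this is the technically delicate heart of the Tannakian reconstruction, and is precisely where all three hypotheses must be invoked in concert.
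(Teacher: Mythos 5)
The paper itself does not prove this statement---it simply cites \cite[Theorem 9.2]{lag}---and your matrix-coefficient reconstruction is the standard argument given in that reference, so your overall route is the right one: uniqueness via a faithful representation is correct (a closed immersion $G\hookrightarrow \GL_W$ corresponds to a surjection of Hopf algebras, hence induces an injection on $R$-points), and steps (b), (c) and the final identification $\rho_V(g)=\lambda_V$ go through as you indicate once $\varphi$ is known to be a $k$-algebra homomorphism.

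The place where you defer rather than argue is exactly the step carrying the content, your item (a), and as phrased (``a diagram chase comparing two presentations of $f$ via $G$-equivariant maps'') it is incomplete: two presentations $(V,v,\ell)$ and $(V',v',\ell')$ of the same $f$ need not be related by any $G$-map between $V$ and $V'$, so the comparison must be mediated by the regular representation. The standard fix is: for a presentation $(V,v,\ell)$ the map $V\to k[G]$, $w\mapsto \ell\big(\rho_V(\cdot)w\big)$, is $G$-equivariant for the right regular action and sends $v$ to $f$; its image lies in some finite-dimensional subrepresentation $W\subseteq k[G]$ containing $f$, and composing with evaluation at the identity recovers $\ell$. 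Applying hypothesis (iii) to this map (and to inclusions between such $W$'s) shows that every candidate value $\ell_R\big(\lambda_V(v\otimes 1)\big)$ equals $(\epsilon|_W)_R\big(\lambda_W(f\otimes 1)\big)$, which is visibly independent of the presentation. This same formula yields the $k$-linearity (in particular additivity) of $\varphi$, a requirement missing from your checklist (a)--(c) but needed both for $\varphi\in\Hom_{\Alg_k}(k[G],R)=G(R)$ and for your closing step, where you pass from agreement of all matrix coefficients $\ell_R\big(\rho_V(g)(v\otimes 1)\big)=\varphi\big(\ell(\rho_V(\cdot)v)\big)=\ell_R\big(\lambda_V(v\otimes 1)\big)$ to the equality $\rho_V(g)=\lambda_V$. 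With these points supplied, your proof is complete and agrees with the argument the paper invokes by citation.
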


\begin{cor}\label{C:recover}
Let $G$ be an affine algebraic $k$-group. We have an isomorphism $G \cong \uAut^\otimes(\omega^G)$ of affine algebraic $k$-groups.
\end{cor}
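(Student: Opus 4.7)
The plan is to construct an explicit natural transformation of group-valued functors on $\Alg_k$ and show that Theorem~\ref{T:recover} makes it an isomorphism pointwise. For each $R \in \Alg_k$, define
\[
\eta_R \colon G(R) \longto \uAut^\otimes(\omega^G)(R), \qquad g \longmapsto \{\rho_V(g)\}_{(V,\rho_V)\in \Rep_k(G)}.
\]
The first thing I would check is that this is well defined, i.e., that the family $\{\rho_V(g)\}_V$ is a tensor automorphism of the base change $\omega^G \otimes_k R \colon \Rep_k(G) \to \Mod_R$. Concretely, $\rho_{V\otimes W}(g)=\rho_V(g)\otimes \rho_W(g)$ by the definition of the tensor product representation, $\rho_{\id}(g)$ is the identity on $R$ since $\id$ is the trivial representation, and for every $G$-equivariant $\alpha\colon V \to W$ we have $\rho_W(g)\circ \alpha_R=\alpha_R\circ \rho_V(g)$ by the equivariance of $\alpha$; these are precisely conditions (i), (ii), (iii) of Theorem~\ref{T:recover}. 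Moreover $\eta_R$ is a group homomorphism because each $\rho_V$ is.

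Next I would use Theorem~\ref{T:recover} to show that $\eta_R$ is bijective. The existence statement of the theorem says that every family $\{\lambda_V\}$ satisfying (i)--(iii) is of the form $\{\rho_V(g)\}$ for some $g \in G(R)$, which gives surjectivity of $\eta_R$. The uniqueness statement says that $g$ is determined by the family, which gives injectivity. Strictly speaking the theorem is stated for $R$-linear maps rather than automorphisms, but any tensor endomorphism $\lambda_V=\rho_V(g)$ is automatically invertible with inverse $\rho_V(g^{-1})$, so the two formulations agree on $\uAut^\otimes(\omega^G)(R)$.

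Finally, I would verify that $\eta$ is natural in $R$: for a morphism $f\colon R\to R'$ of $k$-algebras and $g \in G(R)$, the formation of $\rho_V(g)$ commutes with applying $f$ coefficient-wise, since $\rho_V$ is a morphism of $k$-schemes and therefore of functors on $\Alg_k$. This gives a natural isomorphism of functors $G \cong \uAut^\otimes(\omega^G)$ on $\Alg_k$, and in particular shows that $\uAut^\otimes(\omega^G)$ is representable by the affine algebraic $k$-group $G$; the group-scheme structure transported across $\eta$ agrees with the one coming from composition of tensor automorphisms because $\eta_R$ is a group homomorphism for every $R$. There is no serious obstacle here: the entire content of the corollary is packaged inside Theorem~\ref{T:recover}, and the only mild point to keep straight is matching the functor-of-points description of $\uAut^\otimes(\omega^G)$ with the hypotheses of the theorem.
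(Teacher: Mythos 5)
Your argument is correct and is essentially the (unwritten) argument the paper has in mind: the corollary is stated without proof immediately after Theorem~\ref{T:recover} precisely because the pointwise bijection $G(R)\to \uAut^\otimes(\omega^G)(R)$ you describe is exactly what that theorem provides, and naturality in $R$ is immediate. The two mild points you flag --- that Theorem~\ref{T:recover} is stated for arbitrary tensor endomorphisms rather than automorphisms (so invertibility is a consequence, not a hypothesis), and that representability of $\uAut^\otimes(\omega^G)$ is transported along the natural isomorphism --- are exactly the things one should say, and you say them.
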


\begin{cor}\label{C:fibre-iso}
Let $G$ be a smooth affine algebraic $k$-group. Let $\ell/k$ be a field extension and let $\eta\colon \Rep_k(G)\to \Vect_\ell$ be a fibre functor over $\ell$. Then $\uHom^\otimes (\omega^G,\eta)$ is a $G$-torsor over $\ell$. In particular, if $H^1(\ell,G)=\{1\}$ and $G(\ell)\neq \emptyset$, then $\omega^G$ is isomorphic to $\eta$ over $\ell$.
\end{cor}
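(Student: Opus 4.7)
The plan is to exhibit $T \coloneqq \uHom^\otimes(\omega^G, \eta)$ as a $G_\ell$-torsor: I equip $T$ with a natural right $G_\ell$-action, verify simple transitivity via Theorem~\ref{T:recover}, and then invoke the standard Deligne representability and local-trivialization results to conclude, after which the ``in particular'' claim falls out of $H^1$-triviality.

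For $R \in \Alg_\ell$, an element of $T(R)$ is a collection $\lambda = \{\lambda_V\}_V$ of $R$-linear isomorphisms $V_R \to \eta(V)_R$ indexed by $(V,\rho_V) \in \Rep_k(G)$, natural in $V$ and compatible with tensor products and the unit object. For $g \in G_\ell(R)$, the family $\{\rho_V(g)\}_V$ satisfies conditions~(i)--(iii) of Theorem~\ref{T:recover}, so $\lambda \cdot g \coloneqq \{\lambda_V \circ \rho_V(g)\}_V$ again lies in $T(R)$; this defines a functorial right $G_\ell$-action on $T$. If $\lambda \cdot g = \lambda$, then, since $\lambda$ is a tensor isomorphism (tensor natural transformations between fibre functors to $\Vect$ are automatically invertible), cancelling each $\lambda_V$ yields $\rho_V(g) = \mathrm{id}$ for all $V$, and Corollary~\ref{C:recover} applied to $G_\ell$ forces $g = 1$. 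Conversely, given $\lambda,\mu \in T(R)$, the composite $\lambda^{-1}\circ\mu$ is a tensor automorphism of $\omega^G$ over $R$, hence of the form $\{\rho_V(g)\}_V$ for a unique $g \in G_\ell(R)$ by Corollary~\ref{C:recover}, so $\mu = \lambda \cdot g$. Thus the action is simply transitive whenever $T(R) \neq \emptyset$.

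To upgrade this to a torsor, I will invoke two standard tannakian facts. Representability: $T$ is an affine $\ell$-scheme, realised as the intersection, inside a product of affine $\Isom$-schemes attached to a tensor-generating family of $\Rep_k(G)$, of the closed subfunctors cut out by naturality and tensor-compatibility. Local non-emptiness: any fibre functor of $\Rep_k(G)$ is isomorphic to $\omega^G$ after a faithfully flat extension of $\ell$, so $T$ is non-empty fpqc-locally on $\Spec \ell$. Smoothness of $G$ guarantees that the resulting torsor $T$ is itself smooth and therefore already trivialises over the separable closure of $\ell$. For the ``in particular'' claim, such smooth $G_\ell$-torsors are classified by $H^1(\ell,G)$; its triviality forces $T$ to be the trivial torsor, so $T(\ell) \neq \emptyset$, yielding a tensor isomorphism $\omega^G \cong \eta$ over $\ell$. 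The main obstacle is establishing the local non-emptiness of $T$, i.e.\ that any fibre functor on $\Rep_k(G)$ becomes isomorphic to $\omega^G$ after extension of scalars; this is the deep core of Deligne's tannakian formalism and is precisely where the smoothness of $G$ enters essentially.
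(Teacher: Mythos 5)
Your proof is correct and follows essentially the same route as the paper: both endow $\uHom^\otimes(\omega^G,\eta)$ with the pre-composition $G$-action, invoke the deep Tannakian result (packaged in the paper as \cite[Theorem~3.2(i)]{tc}, which is exactly your ``representability plus fpqc-local non-emptiness'') to obtain torsor-ness, and then use the $H^1(\ell,G)$-classification of torsors together with automatic invertibility of tensor morphisms (\cite[Proposition~1.13]{tc}) to conclude. Your explicit verification of simple transitivity via Theorem~\ref{T:recover} and Corollary~\ref{C:recover} is a worthwhile elaboration of what the citation implicitly provides, but it is the same argument, not a different one.
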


\begin{proof}
Notice that we have an action
\[\uHom^\otimes (\omega^G,\eta) \times \uAut^\otimes (\omega^G) \longto \uHom^\otimes (\omega^G,\eta)\]
by pre-composition. By~\cite[Theorem 3.2 (i)]{tc}, $\uHom^\otimes (\omega^G,\eta)$ is an $\uAut^\otimes (\omega^G)$-torsor. In particular, it is a $G$-torsor over $\ell$ by Corollary~\ref{C:recover}.

Because $G$ is a $\ell$-group variety, $G$-torsors over $\eta$ are $\ell$-varieties by~\cite[Proposition 2.69]{lag}, whose isomorphism classes are classified by $H^1(\ell,G)$. It follows from the triviality of $H^1(\ell,G)$ that $\uHom^\otimes (\omega^G,\eta)(\ell)\cong G(\ell)$, hence $\uHom^\otimes (\omega^G,\eta)(\ell)\neq \emptyset$. \cite[Proposition 1.13]{tc} then implies the second assertion.
\end{proof}

To end this subsection, we give a Lie algebra version of Theorem~\ref{T:recover}. We start with recalling the notion of the Lie algebra of a $k$-group functor (see~\cite[II, \S 4]{DG} for a more details).

For any $R\in \Alg_k$, we define the \emph{$R$-algebra of dual numbers} $R[\varepsilon]\coloneqq R[X]/(X^2)$. Put $\varepsilon\coloneqq X+(X^2)$, we then have the canonical projection $\pi_R\colon R[\varepsilon]\to R,~\varepsilon\mapsto 0$.  Let $G$ be a $k$-group functor. We define
\[\Lie(G)(R) \coloneqq \Ker G(\pi_R).\]
Let $f\colon G\to H$ be a morphism of $k$-group functors. The commutative diagram
\begin{equation}\label{D:Lie}
\begin{tikzcd}
\Lie(G)(R)=\Ker(G(\pi_R)) \ar[d,"\iota_G"]  && \Lie(H)(R)=\Ker(H(\pi_R)) \ar[d,"\iota_H"]\\
G(R[\epsilon]) \ar[rr,"{f(R[\epsilon])}"]  \ar[d,"G(\pi_R)"]  &&  H(R[\epsilon])\ar[d,"H(\pi_R)"]\\
G(R)\ar[rr,"f(R)"]  &&  H(R)
\end{tikzcd}
\end{equation}
implies that $f(R[\epsilon])\circ \iota_G(X)\in \Lie(H)(R)$ for all $X\in \Lie(G)(R)$. We define $\Lie(f)\coloneqq f(R[\epsilon])\circ \iota_G\colon \Lie(G)(R) \to \Lie(H)(R)$. Hence, $\Lie(\func)(R)$ is functor from the category of $k$-group functors to that of abelian groups.

For an affine algebraic $k$-group $G$, we write $I$ for the kernel of the counit $\epsilon_G\colon k[G]\to k$. We have the following familiar group isomorphisms 
\[\fg \coloneqq \Lie(G)(k) \cong \Hom_k(I/I^2,k)\cong \Der_k(k[G],k).\]
Moreover, we have $\Lie(G)(R)\cong \fg_R$. The Lie bracket on $\Der_k(k[G],k)$ then gives a Lie bracket on $\fg_R$ and hence on $\Lie(G)(R)$. We will identify $\Lie(G)(R)$ and $\fg_R$, and call it the \emph{Lie algebra} of $G$ over $R$, whenever $G$ is affine algebraic. In this case, $\Lie(\func)(R)$ is a functor from the category of affine algebraic $k$-groups to that of Lie algebras over $R$.

\begin{rmk}\label{R:Lie-endo}
For any $d$-dimensional $G$-representation $(V,\rho_V)$, we write $\fgl_V\coloneqq \Lie(\GL_V)(k)$. We then have $\fgl_{V,R}=\{I_d+\varepsilon B \mid  B\in\Mat_{d,d}(R)\}$, after choosing a $k$-basis for $V$. Then $I_d+\varepsilon B \mapsto B$ gives a group isomorphism from $\fgl_{V,R}$ to $\End_R(V_R)$. Henceforth, we will identify $\Lie(\rho_V)(X)$ as an endomorphism of $V_R$, for all $X\in \fg_R$.

Replacing $H$ with $\GL_V$ and $f$ with $\rho_V$ in diagram~\eqref{D:Lie}, we obtain a morphism $\Lie(\rho_V)=\rho_V(R[\epsilon])\circ\iota_G \colon \fg_R \to \fgl_{V,R}$ of Lie algebras over $R$. Let $(W,\rho_W)\in \Rep_k(G)$, and let $\alpha\in\Hom_G(V,W)$. We then have $\alpha_R\circ\Lie(\rho_V)(X)=\Lie(\rho_W)(X)\circ\alpha_R$ for all $X\in\fg_R$.
\end{rmk}

Applying the functor $\Lie(\func)(R)$ on both sides of the isomorphism in Corollary~\ref{C:recover} then gives us an isomorphism $\fg_R\cong \Lie(\uAut^\otimes (\omega^G))(R)$ of Lie algebras over $R$. The following lemma indicates that the elements in $\Lie(\uAut^\otimes (\omega^G))(R)$ are exactly the derivatives (in the sense of taking derivations of conditions (i,ii,iii) in Theorem~\ref{T:recover}) of elements in $\uAut^\otimes (\omega^G)(R)$.

\begin{cor}\label{C:Lie-tannakian}
Let $G$ be an affine algebraic $k$-group and let $R$ be a $k$-algebra. Suppose that for any $(V,\rho_V)\in \Rep_k (G)$ we are given an $R$-linear endomorphism $\theta_V$ of $V_R$ subject to the conditions
\begin{itemize}
\item[(i)] $\theta_{V\stens W}=\theta_V\otimes \Id_{W_R} +\Id_{V_R}\otimes \theta_W$ for all $V,W\in \Rep_k(G)$;
\item[(ii)] $\theta_{\id}=0$ where $\id=k$ is the trivial $G$-representation;
\item[(iii)] $\theta_W\circ \alpha_R=\alpha_R\circ \theta_V$ for all $\alpha\in\Hom_G(V,W)$.
\end{itemize}
Then there exists a unique element $X\in\fg_R$ such that $\theta_V=\Lie(\rho_V)(X)$ for all $(V,\rho_V)\in \Rep_k(G)$.
\end{cor}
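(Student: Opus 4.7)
The plan is to deduce the statement from the group-theoretic tannakian duality (Theorem~\ref{T:recover}) applied over the ring of dual numbers $R[\varepsilon]$. The idea is that the Leibniz-type condition (i) on $\{\theta_V\}$ is exactly what one gets by linearizing condition (i) of Theorem~\ref{T:recover} at $\varepsilon=0$, so $\Id + \varepsilon \theta_V$ should come from a group element, which by its triviality mod $\varepsilon$ lies in $\fg_R$.

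More precisely, for each $(V,\rho_V)\in\Rep_k(G)$, let $\tilde\theta_V$ denote the $R[\varepsilon]$-linear extension of $\theta_V$ to $V_{R[\varepsilon]} = V_R\otimes_R R[\varepsilon]$, and set
\[
\lambda_V \coloneqq \Id_{V_{R[\varepsilon]}} + \varepsilon\, \tilde\theta_V \in \End_{R[\varepsilon]}(V_{R[\varepsilon]}).
\]
First I would verify that $\{\lambda_V\}$ satisfies conditions (i), (ii), (iii) of Theorem~\ref{T:recover} over $R[\varepsilon]$. Condition (ii) is immediate from $\theta_{\id}=0$, and condition (iii) is a direct rewriting of (iii) in our hypothesis. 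For (i), the relation $\varepsilon^2=0$ gives
\[
\lambda_V \otimes \lambda_W = \Id + \varepsilon\bigl(\tilde\theta_V\otimes\Id + \Id\otimes\tilde\theta_W\bigr) = \Id + \varepsilon\,\widetilde{\theta_{V\otimes W}} = \lambda_{V\otimes W},
\]
using (i) of the hypothesis.

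Applying Theorem~\ref{T:recover} over $R[\varepsilon]$ then yields a unique $g\in G(R[\varepsilon])$ with $\rho_V(g) = \lambda_V$ for all $V$. Since $\lambda_V \bmod \varepsilon = \Id_{V_R}$, the image $G(\pi_R)(g)\in G(R)$ satisfies $\rho_V(G(\pi_R)(g)) = \Id_{V_R}$ for every $V$; by the uniqueness clause of Theorem~\ref{T:recover} applied over $R$ this forces $G(\pi_R)(g) = e$, i.e.\ $g\in \Ker G(\pi_R) = \Lie(G)(R) = \fg_R$. Set $X\coloneqq g$. By the description of $\Lie(\rho_V)$ in diagram~\eqref{D:Lie} and the identification $\fgl_{V,R}\ni I_d+\varepsilon B\mapsto B\in \End_R(V_R)$ from Remark~\ref{R:Lie-endo}, the equality $\rho_V(g) = \Id + \varepsilon\,\tilde\theta_V$ translates to $\Lie(\rho_V)(X) = \theta_V$ for every $V$. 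Uniqueness of $X$ follows from the uniqueness of $g$ in Theorem~\ref{T:recover}, since any other candidate $X'\in\fg_R$ would yield the same family $\{\lambda_V\}$ under $\rho_V$.

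The main step is really bookkeeping: there is no substantive obstacle, as the conditions (i)--(iii) on the $\theta_V$ were engineered to be exactly the first-order expansion of the corresponding conditions for $g\in G(R[\varepsilon])$. The only subtlety to watch is the identification in Remark~\ref{R:Lie-endo}, which ensures that the element of $\fg_R$ obtained via $G(\pi_R)$ indeed corresponds, under $\Lie(\rho_V)$, to the endomorphism $\theta_V$ rather than to some twist of it.
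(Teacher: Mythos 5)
Your proof is correct and follows essentially the same route as the paper: both define $\Id + \varepsilon\theta_V$ over $R[\varepsilon]$, check conditions (i)--(iii) of Theorem~\ref{T:recover} (with $\varepsilon^2=0$ killing the cross term in (i)), apply the theorem to produce $g\in G(R[\varepsilon])$, and identify $g$ with an element of $\fg_R$. The only cosmetic difference is that you deduce $g\in\Ker G(\pi_R)$ from the uniqueness clause of Theorem~\ref{T:recover}, whereas the paper invokes the isomorphism $\fg_R\cong \Lie\bigl(\uAut^\otimes(\omega^G)\bigr)(R)$; these are two phrasings of the same fact.
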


\begin{proof}  
For any $(V,\rho_V)\in \Rep_k(G)$ and $\theta_V\colon V_R\to V_R$, we define the following $R[\varepsilon]$-linear map
\[\varepsilon \theta_V \colon  V_{R[\varepsilon]} \longto  V_{R[\varepsilon]}, ~~~~~ v\otimes(x+y\varepsilon) \longmapsto \theta_V(v\otimes x)\varepsilon.\]
We then define the following $R[\varepsilon]$-linear endomorphism
\[\tilde \theta_V \coloneqq \Id_{V_{R[\varepsilon]}}+\varepsilon \theta_V \colon V_{R[\varepsilon]} \longto V_{R[\varepsilon]}. \]
Then $\tilde \theta_V\in \Lie(\GL_V)(R)\subseteq \GL_V(R[\varepsilon])$, because $\pi_R(\tilde \theta_V)=\Id_{V_R}$. 

We claim that the family 
\begin{equation}\label{E:Lie-tanakian}
\big\{\tilde \theta_V\colon V_{R[\varepsilon]}\to V_{R[\varepsilon]}  \mid (V,\rho_V)\in \Rep_k (G) \big\}   
\end{equation}
of $R[\varepsilon]$-linear endomorphisms satisfies conditions (i,ii,iii) in Theorem~\ref{T:recover}. Granting this claim for a moment, we then have that $\tilde\theta\in \uAut^\otimes(\omega^G)(R[\varepsilon])$. In particular, there exists a unique element $X\in G(R[\varepsilon])$ such that $\tilde\theta_V=\rho_V(X)$ for all $(V,\rho_V)\in\Rep_k(G)$. Since $\pi_R(\tilde\theta)=\Id\in \uAut^\otimes(\omega^G)(R)$, we have $\tilde\theta\in \Lie(\uAut^\otimes(\omega^G))(R)$. The isomorphism $\fg_R\cong \Lie(\uAut^\otimes (\omega^G))(R)$ then implies that $X\in \fg_R$. Furthermore, it follows from the construction that $\theta_V=\Lie(\rho_V)(X)$ for all $(V,\rho_V)\in\Rep_k(G)$, and the proposition follows.

It remains to prove the claim. Condition (ii) is clear from the construction. Given $(W,\rho_W)\in\Rep_k(G)$, we compute
\begin{align*}
\tilde \theta_{V\stens W} &=\Id_{(V\stens W)_R}+\varepsilon \theta_{V\stens W}\\
&=\Id_{(V\otimes W)_R}+\varepsilon(\theta_V\otimes \Id_{W_R} +\Id_{V_R}\otimes \theta_W)\\
&=(\Id_{V_R}+\varepsilon \theta_V)\otimes (\Id_{W_R}+\varepsilon \theta_W)\\
&=\tilde \theta_V\otimes \tilde \theta_W.
\end{align*}
Hence, \eqref{E:Lie-tanakian} satisfies condition (i). It remains to show that~~\ref{T:recover} satisfies condition (iii). Let $\alpha\in \Hom_G(V,W)$. For any $v\otimes(x+y\varepsilon)\in V_{R[\varepsilon]}$,  we compute
\begin{align*}
\alpha_{R[\varepsilon]}\circ \varepsilon \theta_V(v\otimes(x+y\varepsilon)) &=\alpha_{R[\varepsilon]} (\theta_V(v \otimes x))\varepsilon
=(\alpha_R\circ \theta_V)(v\otimes x )\varepsilon \\
&=(\theta_W\circ \alpha_R)(v\otimes x )\varepsilon 
=\theta_W (\alpha(v)\otimes x)\varepsilon
\\
&=\varepsilon\theta_W ( \alpha(v)\otimes (x+y\epsilon))
=\varepsilon\theta_W \circ \alpha_{R[\varepsilon]} (v\stens (x+y\varepsilon)).
\end{align*}
It follows that
\begin{align*}
\alpha_{R[\varepsilon]}\circ \tilde \theta_V&=\alpha_{R[\varepsilon]}\circ (\Id_{V_{R[\varepsilon]}}+\varepsilon \theta_V)
=\alpha_{R[\varepsilon]}+\alpha_{R[\varepsilon]}\circ \varepsilon \theta_V\\
&=\alpha_{R[\varepsilon]}+\varepsilon\theta_W \circ \alpha_{R[\varepsilon]}
=(\Id_{W_{R[\varepsilon]}}+\varepsilon \theta_W)\circ \alpha_{R[\varepsilon]}\\
&=\tilde \theta_W \circ \alpha_{R[\varepsilon]},
\end{align*}
as desired.

\end{proof}

\subsection{Filtered and graded fiber functors}\label{S:ziegler}
We recall the notion of filtered and graded fiber functors on tannakian categories following~\cite{ziegler}. Let $\Gamma$ be a totally ordered abelian group (written additively) and let $R\in \Alg_k$. A \emph{$\Gamma$-graded} $R$-module is an $R$-module $M$ together with a direct sum decomposition $M=\bigoplus\limits_{\gamma\in \Gamma} M_\gamma$.
A morphism between two $\Gamma$-graded $R$ modules $M$ and $N$ is an $R$-linear map $f\colon M\to N$ such that $f(M_\gamma)\subseteq N_\gamma$ for all $\gamma\in\Gamma$. We denote by $\Gamma$-$\Grad_R$ the category of $\Gamma$-graded modules over $R$. For $M,N\in \Gamma$-$\Grad_R$, we define the tensor product $(M\stens_R N)_\gamma= \bigoplus\limits_{\gamma'+\gamma''=\gamma} \big( M_{\gamma'}\stens_R N_{\gamma''}\big)$.

Let $M$ be an $R$-module. A \emph{$\Gamma$-filtration} on $M$ is an increasing map
\[\cF\colon \Gamma \longto \{R\text{-submodules of}~ M\},~~~\gamma \longmapsto \cF^\gamma M ,\]
such that $\cF^\gamma M=0$ for $\gamma\ll 0$ and $\cF^\gamma M=M$ for $\gamma\gg 0$, which is \emph{increasing} in the sense that $\cF^\gamma M \subseteq \cF^{\gamma'} M$ whenever $\gamma \leq \gamma'$. A \emph{$\Gamma$-filtered $R$-module} is an $R$-module $M$ with a $\Gamma$-filtration. To abbreviate notations, we sometimes denote $\cF^\gamma M$ by $M^\gamma$ if no confusion shall arise. A morphism between two $\Gamma$-filtered $R$-modules $M$ and $N$ is an $R$-linear map $f\colon M\to N$ such that $f(M^\gamma)\subseteq N^\gamma$ for all $\gamma\in \Gamma$. We denote by $\Gamma$-$\Fil_R$ the category of $\Gamma$-filtered modules over $R$.

Let $M$ be a $\Gamma$-filtered module over $R$. For any $\gamma\in \Gamma$, we put $\cF^{\gamma-} M\coloneqq \ssum\limits_{\gamma'<\gamma} \cF^{\gamma'} M$. We define
\[\gr_\cF^\gamma M\coloneqq \cF^\gamma M/\cF^{\gamma-}M.\]
Then $\gr_\cF M\coloneqq \bigoplus\limits_{\gamma\in \Gamma} \gr_\cF^\gamma M$ is a $\Gamma$-graded $R$ module, and is called the \emph{$\Gamma$-graded $R$-module associated to $\cF$}. We thus have a functor 
\[\gr\colon \Gamma\text{-}\Fil_R \longto \Gamma\text{-}\Grad_R.\]
Elements $\gamma\in \Gamma$ such that $\gr_\cF^\gamma M\neq 0$ are said to be the $\Gamma$-\emph{jumps} (or simply jumps) of $\cF$.

The tensor product structure in $\Gamma$-$\Fil_R$ is defined by
\[\cF^\gamma (M\stens_R N)=\ssum\limits_{\gamma'+\gamma''=\gamma} \cF^{\gamma'} M\stens_R \cF^{\gamma''}N,\]
for all $\Gamma$-filtered modules $M$ and $N$ over $R$.

A morphism $f\colon M\to N$ in $\Gamma$-$\Fil_R$ is said to be \emph{admissible} (or \emph{strict}) if 
\[f(M^\gamma)=f(M)\cap N^\gamma,~~~~~\forall \gamma\in \Gamma.\]
Following~\cite[\S 4.1]{ziegler}, we say that a short sequence 
$\begin{tikzcd}
0 \ar[r] &M' \ar[r,"f'"] & M  \ar[r,"f''"] &M''  \ar[r]  &0
\end{tikzcd}$
in $\Gamma$-$\Fil_R$ is \emph{exact} if  both of $f'$ and $f''$ are admissible, and the underlying short sequence in $\Mod_R$ is exact.

Let $\mathcal T$ be a tannakian category over $k$ and let $R$ be a $k$-algebra.
\begin{itemize}
\item[(i)] A \emph{$\Gamma$-graded fiber functor} on $\mathcal T$ over $R$ is an exact faithful $k$-linear tensor functor $\tau\colon \mathcal T \to \Gamma$-$\Grad_R$.
\item[(ii)] A \emph{$\Gamma$-filtered fiber functor} on $\mathcal T$ over $R$ is an exact faithful $k$-linear tensor functor $\eta\colon \mathcal T \to \Gamma$-$\Fil_R$.
\item[(iii)] Given an object $M=\bigoplus\limits_{\gamma\in\Gamma} M_\gamma$ in $\Gamma$-$\Grad_R$, we put $\cF^\gamma(M)\coloneqq\bigoplus\limits_{\gamma'\leq \gamma} M_{\gamma'}$. This gives rise to a functor $\fil\colon\Gamma$-$\Grad_R \to\Gamma$-$\Fil_R$.
\item[(iv)] A $\Gamma$-filtered  fiber functor $\eta$ is called \emph{splittable} if there exists a $\Gamma$-graded fiber functor  $\tau$ such that $\eta=\fil\circ \tau$, and $\tau$ is called a \emph{splitting} of $\eta$.
\end{itemize}

\begin{rmk}\label{R:filtered-fiber}
More concretely, a $\Gamma$-filtered fiber functor is a $k$-linear functor $\eta\colon \mathcal T \to \Gamma$-$\Fil_R$ satisfying the following properties (see \cite[Definition 4.2.6,~Remark 4.2.7]{dat}).
\begin{itemize}
\item[(i)] It is \emph{admissibly} (or \emph{strictly}) functorial, i.e., for any morphism $\alpha\colon X\to Y$ in $\mathcal T$, we have $\eta(\alpha)\big(\cF^\gamma  \eta(X)\big)=\eta(\alpha)(\eta(X))\cap\cF^\gamma \eta(Y)$ for all $\gamma\in\Gamma$.
\item[(ii)] It is compatible with tensor products, i.e., we have
\[ \cF^\gamma \big(\eta(X\stens Y)\big)=\ssum\limits_{\gamma'+\gamma''=\gamma} \cF^{\gamma'} \big(\eta(X)\big)\stens \cF^{\gamma''} \big(\eta(Y)\big),\]
for all $X,Y\in \Ob(\mathcal T)$ and $\gamma\in\Gamma$.
\item[(iii)] \[\cF^\gamma \eta(\id)=\left\{
\begin{array}{ll} 
R   &\text{for}~\gamma \geq 0\\
0    &\text{for}~ \gamma <0,
\end{array} \right.\]
where $\id$ is the identity object in $\mathcal T$. Note that $\cF^\gamma \eta(\id)$ is the identity object in $\Gamma$-$\Fil_R$.
\end{itemize}
\end{rmk}

\begin{cons}\label{C:Z-to-Q}
Let $(M,\cF)\in \ZZ$-$\Fil_R$ be a $\ZZ$-filtered module with $\ZZ$-jumps $\jmath_1< \cdots <\jmath_n$. For any $\gamma\in\Gamma\setminus \{0\}$, we define a $\Gamma$-filtered module $(M,[\gamma]_*\cF)$ by
\[([\gamma]_*\cF)^{x} M\coloneqq \left\{
\begin{array}{ll}
0       &\text{for}~x< \jmath_1\gamma \\
M^{\jmath_i}    ~~~ &\text{for}~ \jmath_i\gamma \leq x <\jmath_{i+1}\gamma,~ 1\leq i\leq n-1\\
M   &\text{for}~x\geq \jmath_n\gamma .
\end{array} \right.\]
We then have a fully faithful embedding $[\gamma]_*\colon \ZZ$-$\Fil_R \to \Gamma$-$\Fil_R$. Similarly, we have a fully faithful embedding $[\gamma]_*\colon \ZZ$-$\Grad_R \to \Gamma$-$\Grad_R$ by defining $[\gamma]_* \coloneqq \gr\circ [\gamma]_*\circ\fil$.

\end{cons}

To end this subsection, we exihibit the following theorem for later use. (Be aware that in \cite{ziegler}, the author only considers $\Gamma$-gradings and $\Gamma$-filtrations for $\Gamma=\ZZ$.)

\begin{thm}\cite[Theorem 4.15]{ziegler}\label{T:ziegler}
Let $\mathcal T$ be a tannakian category over a field $k$ and let $R$ be a $k$-algebra. Let $\eta\colon \mathcal T \to \ZZ$-$\Fil_R$ be a $\ZZ$-filtered fiber functor. If $\uAut^\otimes_R(\forg\circ \eta)$ is pro-smooth (i.e.\ a limit of smooth algebraic group schemes) over $R$, where $\forg\colon \ZZ$-$\Fil_R \to \Mod_R$ is the forgetful functor, then $\eta$ is splittable.
\end{thm}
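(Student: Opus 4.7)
The plan is to identify splittings of $\eta$ with trivializations of a natural torsor under the unipotent radical of a parabolic subgroup of $G \coloneqq \uAut^\otimes_R(\forg\circ\eta)$, and then exploit the pro-smoothness of $G$ to trivialize that torsor by reducing to the vanishing of $H^1$ for $\GG_a$-torsors over the affine base $\Spec R$.

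First, Tannakian reconstruction identifies (up to the appropriate pro-affine completion) $\mathcal T$ with $\Rep_R(G)$ via $\forg\circ\eta$. Under this identification, $\eta$ becomes a functorial $\ZZ$-filtration on every representation, compatible with tensor products and exact sequences. By the filtered Tannakian formalism of Saavedra-Rivano and Deligne, such data is equivalent to specifying a parabolic $R$-subgroup $P\subseteq G$ whose canonical filtration on each representation recovers $\eta$; splittings of $\eta$ then correspond bijectively to cocharacters $\lambda\colon\GG_{m,R}\to G$ whose attached filtration is $\eta$, equivalently to Levi decompositions $P = L \ltimes U$ compatible with $\eta$, where $U$ denotes the unipotent radical of $P$.

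The key technical device is the Rees construction: for each $X\in\mathcal T$, set
\[
\mathrm{Rees}(\eta(X)) \;\coloneqq\; \bsum_{n\in\ZZ}\cF^n\eta(X)\cdot t^{-n} \;\subseteq\; \forg\eta(X)\otimes_R R[t,t^{-1}].
\]
This is an $R[t]$-flat tensor functor whose fiber at $t=0$ is $\fil\circ\gr\circ\eta$ and whose restriction to $\Spec R[t,t^{-1}]$ is canonically isomorphic to the base change of $\forg\circ\eta$. Translated to group-theoretic terms, this produces a $\GG_{m,R}$-equivariant $G$-torsor over $\dA^1_R$ whose restriction to $\GG_{m,R}$ is trivial. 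Producing a splitting of $\eta$ then amounts to extending this trivialization across the origin $\GG_{m,R}$-equivariantly, and the set of such extensions forms a torsor under $U$, since any two candidate splittings differ by a tensor automorphism of $\fil\circ\gr\circ\eta$ whose induced endomorphism of $\gr\circ\eta$ is the identity.

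Finally, I would trivialize this $U$-torsor using pro-smoothness. Writing $G$ as a cofiltered limit of smooth algebraic $R$-groups, the parabolic $P$ and its unipotent radical $U$ inherit pro-smoothness, and $U$ is pro-unipotent: an inverse limit of smooth unipotent $R$-groups, each of which is a successive extension of copies of $\GG_{a,R}$. Since $\Spec R$ is affine, $H^1(\Spec R,\GG_{a,R})=0$, and a standard d\'evissage yields vanishing of $H^1$ for each finite-dimensional smooth unipotent quotient of $U$. The main obstacle is the inverse-limit step: one must assemble the trivializations at finite levels into a coherent trivialization of the pro-torsor, which requires the transition maps on sections over $\Spec R$ to be surjective. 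This Mittag-Leffler-type condition itself reduces to the same $\GG_a$-vanishing at each stage. Combining this limit argument with the tensor-compatibility bookkeeping built into the Rees construction produces a $\ZZ$-graded fiber functor $\tau$ with $\eta = \fil\circ\tau$, as required.
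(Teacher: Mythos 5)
The paper itself offers no proof of this statement; it simply cites Ziegler's Theorem~4.15, so there is no in-paper argument to compare against. Your sketch is therefore a genuine attempt at a proof, and it correctly identifies the overall shape of Ziegler's argument: reduce to trivializing a torsor under a pro-unipotent group, and use vanishing of degree-one cohomology of unipotent groups over the affine base $\Spec R$. However, several steps as written are either circular or have real gaps.

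First, the passage invoking ``the filtered Tannakian formalism of Saavedra-Rivano and Deligne'' to replace the filtered fiber functor by a parabolic $R$-subgroup $P\subseteq G$ with unipotent radical $U$, and to identify splittings with cocharacters $\lambda\colon\GG_{m,R}\to G$, begs the question. That equivalence (over a field, for reductive $G$) is essentially the splittability statement you are trying to prove; over a general base ring $R$ and for a merely pro-smooth (possibly non-reductive) $G$ it is exactly what Ziegler's theorem establishes. You also tacitly identify $\mathcal T$ with $\Rep_R(G)$, but $\mathcal T$ is only Tannakian over $k$, not necessarily neutral, and $G=\uAut^\otimes_R(\forg\circ\eta)$ is an $R$-group scheme, so this identification requires justification (flatness/descent of the fiber functor).

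Second, the torsor argument is incomplete in two ways. (i) You assert that the set of splittings ``forms a torsor under $U$,'' but a priori it is only a pseudo-torsor; one must first prove that splittings exist locally (fpqc or fppf) on $\Spec R$ before the $H^1$-vanishing argument can be applied, and this local splittability is a nontrivial step that you skip. (ii) Your dévissage reducing $H^1$ of $U$ to $H^1(\Spec R,\GG_{a,R})=0$ requires $U$ to be a \emph{split} pro-unipotent group, i.e.\ a limit of groups filtered with successive quotients isomorphic to $\GG_{a,R}$. Smoothness and unipotence alone do not guarantee splitness over a general base. In Ziegler's setting the relevant group is in fact an inverse limit of vector groups built from internal $\Hom$'s between graded pieces of $\gr\circ\eta$, so it is split for structural reasons; this must be established, not assumed.

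Finally, your Mittag-Leffler step is the right concern but needs an actual argument: you say the surjectivity of transition maps ``reduces to the same $\GG_a$-vanishing,'' but the relevant statement is surjectivity of $U_{n+1}(R)\to U_n(R)$, which requires the kernel to be a split unipotent group with vanishing $H^1$ over $R$ \emph{and} the transition maps of group schemes to be (fppf-)surjective — again needing the explicit vector-group description. So the blueprint is sound and in the spirit of Ziegler's proof, but the steps marked ``by filtered Tannakian formalism'' and ``standard dévissage'' hide precisely the substantive content of the theorem.
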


\section{$G$-$\phi$-modules over the Robba ring}\label{S:G-phi}

We fix an affine algebraic $F$-group $G$ in this section.

\subsection{Definition}
Let $R\in\{\cE^\dagger,\cR,\tE^\dagger,\tR\}$ equipped with an absolute Frobenius lift $\phi$.

\begin{defn}\label{D:GIsoc}
A \emph{$G$-$\phi$-module} over $R$ is an exact faithful $F$-linear tensor functor
\[\I\colon \Rep_F(G) \longto \phimodR\]
which satisfies $\forg \circ \I = \omega^G \otimes R$, where $\forg \colon \phimodR \to \Mod_R$ is the forgetful functor. The category of $G$-$\phi$-modules over $R$ is denoted by $\GphimodR$, whose morphisms are morphisms of tensor functors.
\end{defn}

Let $(V,\rho)\in \Rep_F(G)$ and let $g\in G(R)$. We define $\I(g)(V)\coloneqq (V_R,g\phi)$, where
\begin{align*}
 g\phi\colon V_R  \longto V_R, \;\;\;\;\; v\otimes f  \longmapsto \rho(g)(v\otimes 1)\phi(f).
\end{align*}
Let $V,W\in\Rep_F(G)$. We have a canonical isomorphism $(V\otimes W)_\cR \cong V_\cR \otimes_\cR W_\cR$, and we will henceforth identify them. Given any $\alpha\in \Hom_G(V,W)$, we define $\I(g)(\alpha)\coloneqq \alpha_R$. We thus have the following $G$-$\phi$-module over $R$ (associated to $g$).
\[\I(g)\colon \Rep_F(G) \longto \phimodR,\;\;\;\;\; V\longmapsto (V,g\phi).\]
We call $\I(g)(V)=(V_R,g\phi)$ a \emph{$G$-$\phi$-module} over $R$ (associated to $g$).

For any $g\in G(R)$, we sometimes write $\Phi_g= \Phi_{g,V}$ for the $\phi$-linear action $g\phi$ on $V_R$. Both notations have their own advantages in practice.

\begin{rmk}\label{R:monoid}
For any $g\in G(R)$, we define $\phi(g)\coloneqq G(\phi)(g)$. For any $(V,\rho)\in\Rep_F(G)$, we have a commutative diagram
\[\begin{tikzcd}
G(R) \ar{r}{\rho(R)}  \ar{d}[swap]{G(\phi)}  &\GL_V(R) \ar{d}{\GL_V(\phi)}\\
G(R) \ar{r}[swap]{\rho(R)}  & \GL_V(R)
\end{tikzcd}\]
Hence $\rho(\phi(g))=\phi(\rho(g))$. For any $h\in G(R)$ and $n,m\geq 0$, we have the following formula in $G(R)\rtimes \la\phi\ra$
\[(h\phi^n)\circ (g\phi^m)=\big(h\phi^n(g)\big)\phi^{n+m}.\]

\end{rmk}

\subsection{The $\QQ$-filtered fiber functor $\HN_g$}\label{S:HN}

We fix an element $g\in G(\cR)$. 

\begin{cons}
For any $V\in\Rep_F(G)$, we have a $\phi$-module $(V_\cR,g\phi)$ over $\cR$. Kedlaya's slope filtration theorem~\cite[Theorem 6.10]{ked-ann} then provides a filtration
\[0\subseteq V_\cR^{\mu_1}\subseteq \cdots\subseteq V_\cR^{\mu_l}=V_\cR\]
satisfying 
\begin{itemize}
\item[$\bullet$] $V_\cR^{\mu_1}$ is pure of some slope $\mu_1\in\QQ$ and each $V_\cR^{\mu_i}/V_\cR^{\mu_{i-1}}$ is pure of some slope $\mu_i\in\QQ$ for $2\leq i\leq l$;
\item[$\bullet$] $\mu_1<\cdots<\mu_l$. 
\end{itemize}
We thus have an increasing map
\begin{align*}
\cHN_g \colon \QQ &\longto \{\cR\text{-submodules of}~V_\cR\}\\
x &\longmapsto \cHN_g^x(V_\cR),
\end{align*}
where
\[\cHN_g^x(V_\cR)=\left\{
\begin{array}{ll}
0       &\text{for}~x<\mu_1\\
V_\cR^{\mu_i}    ~~~ &\text{for}~ \mu_i\leq x <\mu_{i+1}, 1\leq i\leq l-1\\
V_\cR   &\text{for}~x\geq \mu_l.
\end{array} \right.\]
Then $(V_\cR,\cHN_g)$ is a $\QQ$-filtered module over $\cR$ with $\QQ$-jumps $\mu_1<\cdots<\mu_l$. We will denote $\cHN_g^x(V_\cR)$ by $V_\cR^x$ when $\cHN_g$ is clear in the context.
\end{cons}

\begin{thm}\label{T:filtered-fiber}
The assignments
\[V \longmapsto (V_\cR,\cHN_g) ~~~\text{and}~~~\alpha \longmapsto \alpha_\cR,\]
for all $\alpha\in \Hom_G(V,W)$, define a $\QQ$-filtered fiber functor
\[\HN_g \colon \Rep_F(G)  \longto \QQ\text{-}\Fil_\cR.\] 
\end{thm}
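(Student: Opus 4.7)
The plan is to verify that $\HN_g$ satisfies the three conditions of Remark~\ref{R:filtered-fiber} for being a $\QQ$-filtered fiber functor, and additionally that it is exact, faithful, and $F$-linear. The starting observation, needed throughout, is that for any $G$-equivariant morphism $\alpha\colon V\to W$, the $\cR$-linear extension $\alpha_\cR\colon V_\cR\to W_\cR$ commutes with $g\phi$: using Remark~\ref{R:monoid}, $G$-equivariance of $\alpha$ yields $\rho_W(g)\circ\alpha_\cR=\alpha_\cR\circ\rho_V(g)$, and $\alpha_\cR$ obviously commutes with the scalar $\phi$. Hence $V\mapsto (V_\cR,g\phi)$ and $\alpha\mapsto \alpha_\cR$ define a functor $\Rep_F(G)\to \phimodcR$, and $\cHN_g$ is obtained from this by post-composing with the slope filtration, which is a canonical functorial construction by Kedlaya's slope filtration theorem~\cite[Theorem 6.10]{ked-ann}.

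Next I would check the admissibility condition (i) of Remark~\ref{R:filtered-fiber}. Given $\alpha\colon V\to W$ in $\Rep_F(G)$, the inclusion $\alpha_\cR(V_\cR^x)\subseteq\alpha_\cR(V_\cR)\cap W_\cR^x$ holds by the functoriality of the slope filtration: $\alpha_\cR(V_\cR^x)$ is a sub-$\phi$-module whose slopes are all $\leq x$, so by Lemma~\ref{L:nonzero-morphism} it lands in $W_\cR^x$. For the reverse inclusion I would use that the slope filtration of $\alpha_\cR(V_\cR)$, viewed as a sub-$\phi$-module of $W_\cR$, is the restriction of the filtration on $W_\cR$; combined with the fact that $\alpha_\cR$ surjects $V_\cR^x$ onto the maximal sub-$\phi$-module of $\alpha_\cR(V_\cR)$ of slopes $\leq x$ (again by Lemma~\ref{L:nonzero-morphism} applied to $V_\cR/V_\cR^x$), this yields strictness. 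This essentially recovers the standard fact that morphisms of $\phi$-modules over $\cR$ are strict for their slope filtrations.

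For condition (ii), I would invoke the tensor-compatibility of slopes: if $M$ is pure of slope $\mu$ and $N$ is pure of slope $\nu$, then $M\otimes_\cR N$ is pure of slope $\mu+\nu$. This is part of Kedlaya's theory (and can be reduced to the unit-root case via pushforward and twist, or verified on a Dieudonn\'e--Manin decomposition after extending scalars by Lemma~\ref{L:tilde-tensor}(iv) and descending using (i)). Granted this, the slope filtration of $(V\otimes W)_\cR\cong V_\cR\otimes_\cR W_\cR$ is precisely the convolution of the filtrations of $V_\cR$ and $W_\cR$, which is condition (ii). Condition (iii) is immediate: the trivial representation gives $(\cR,\phi)$, which is unit-root, i.e.\ pure of slope $0$. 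Faithfulness and $F$-linearity follow from faithfulness and $F$-linearity of $V\mapsto V_\cR$; exactness of short exact sequences in $\QQ$-$\Fil_\cR$ follows once admissibility is established, since the underlying sequence is obtained from an $F$-vector-space sequence by the flat base change $(-)\otimes_F\cR$.

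The main obstacle will be the two ``strictness-type'' assertions just described: the admissibility of $\eta(\alpha)$ in condition (i), and the equality (not just inclusion) of the convoluted filtration with the slope filtration of the tensor product in condition (ii). Both rest on the sharper features of Kedlaya's slope filtration---namely its uniqueness as the (unique) separating filtration by pure subquotients of increasing slope, together with the additivity of slopes under tensor products and duals. Once these facts are in hand, the remaining conditions are essentially formal consequences of the functoriality of $V\mapsto V_\cR$.
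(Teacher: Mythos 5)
Your decomposition of the task (functoriality, tensor compatibility, normalization, exactness) matches the paper's, and your treatment of tensor compatibility via the Dieudonn\'e--Manin decomposition over $\tR(E,t)$ and faithfully flat descent is essentially the paper's argument (Proposition~\ref{P:filtered-fiber}). The gap is in your treatment of admissibility/exactness.

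The forward inclusion $\alpha_\cR(V_\cR^x)\subseteq W_\cR^x$ is fine: the image is a quotient of $V_\cR^x$, so its largest slope is $\leq x$, and then Lemma~\ref{L:nonzero-morphism} kills the composite to $W_\cR/W_\cR^x$. But both of your steps for the reverse inclusion run into the same wall. (a) To prove that the slope filtration of $\alpha_\cR(V_\cR)$ is the restriction of that of $W_\cR$, you'd need to know that $\alpha_\cR(V_\cR)\cap W_\cR^x$ has slopes $\leq x$ because it sits inside $W_\cR^x$; but sub-$\phi$-modules of a module with slopes $\leq x$ need not have slopes $\leq x$ (only the \emph{minimal} slope of a sub is controlled from below, cf.\ the proof of Lemma~\ref{L:nonzero-morphism}), so this containment does not follow. (b) To show $\alpha_\cR(V_\cR^x)$ exhausts the low-slope part of $\alpha_\cR(V_\cR)$, you want to kill a map into $\alpha_\cR(V_\cR)/\alpha_\cR(V_\cR^x)$ by citing that it is a quotient of $V_\cR/V_\cR^x$ and hence has slopes $>x$; but quotients of a module with slopes $>x$ only have \emph{maximal} slope bounded above, the minimal slope can drop below $x$, so Lemma~\ref{L:nonzero-morphism} is not applicable in this direction. (There is also the lurking issue that $\alpha_\cR(V_\cR)/\alpha_\cR(V_\cR^x)$ may have torsion, so it need not be a $\phi$-module at all.)

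The paper's Proposition~\ref{P:exact} gets around this precisely by a trick you do not mention: it extends scalars to $\tR(E,t)$, where $\Phi = \psi(g)\phi$ is \emph{invertible} and the Dieudonn\'e--Manin decomposition holds, and then observes that since $\alpha$ is $G$-equivariant and $g^{-1}\in G(\cR)$, the map $\alpha_{\tR(E,t)}$ is simultaneously a morphism of $\phi$-modules and of $\phi^{-1}$-modules (with Frobenius $\Phi^{-1}=\phi^{-1}(\psi(g^{-1}))\phi^{-1}$). Passing to $\phi^{-1}$-modules \emph{negates} all slopes, so the problematic map $\xi\colon V_{\tR(E,t)}'\to W_{\tR(E,t)}^x$ (slopes $>x$ mapping to slopes $\leq x$, where Lemma~\ref{L:nonzero-morphism} is unusable) becomes a map between modules with slopes $<-x$ and slopes $\geq -x$, to which Lemma~\ref{L:nonzero-morphism} does apply. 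One then descends along the faithfully flat map $\cR\to\tR(E,t)$. Note that this reversal uses the $G$-structure in an essential way (it is what guarantees $\alpha$ is a $\phi^{-1}$-morphism), so it is not a purely formal consequence of ``strictness of HN filtrations''; you need either this trick or an independently established strictness theorem for $\phi$-module morphisms over $\cR$, which you do not supply.
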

\begin{proof}
This is Proposition~\ref{P:filtered-fiber} and Proposition~\ref{P:exact} below.

\end{proof}

For any admissible extension $E$ of $K$, we first remark that the $\phi$-equivariant embedding $\psi\colon \cR\to \tR(E,t)$ is faithfully flat (see~\cite[Remark 3.5.3]{ked-relative}). We also remark that, if $M_1$ and $M_2$ are pure $\phi$-modules over $\cR$ of slopes $\mu_1$ and $\mu_2$, respectively, then $M_1\otimes_\cR M_2$ is pure of slope $\mu_1+\mu_2$ (see~\cite[Corollary 1.6.4]{ked-relative}). These facts will be repeatedly used in the sequel.

\begin{propo}\label{P:filtered-fiber}
The assignments in Theorem~\ref{T:filtered-fiber} yield a faithful $F$-linear tensor functor $\HN_g \colon\Rep_F(G) \to \QQ$-$\Fil_\cR$.
\end{propo}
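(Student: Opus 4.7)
The statement requires four things: that $\HN_g$ gives a well-defined functor into $\QQ$-$\Fil_\cR$; that it is $F$-linear on hom-sets; that it is compatible with the tensor structure (including the identity object); and that it is faithful. The bulk of the work lies in functoriality and in tensor compatibility.

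For functoriality, let $\alpha \in \Hom_G(V,W)$. The $\cR$-linear extension $\alpha_\cR$ intertwines $\Phi_{g,V}$ and $\Phi_{g,W}$: indeed $\alpha_\cR \circ \rho_V(g) = \rho_W(g) \circ \alpha_\cR$ by $G$-equivariance of $\alpha$, and $\alpha_\cR$ trivially commutes with the coefficient-wise action of $\phi$, so $\alpha_\cR$ is a morphism in $\phimodcR$. To see that $\alpha_\cR(V_\cR^x) \subseteq W_\cR^x$ for every $x \in \QQ$, I appeal to the functoriality of the slope filtration: the composite $V_\cR^x \hookrightarrow V_\cR \xrightarrow{\alpha_\cR} W_\cR \twoheadrightarrow W_\cR/W_\cR^x$ has source with all slopes $\leq x$ and target with all slopes $>x$, and Lemma~\ref{L:nonzero-morphism}, applied iteratively along the slope filtration of $V_\cR^x$ and of the target quotient, forces this composite to vanish.

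For the tensor structure, the canonical identification $(V\otimes_F W)_\cR = V_\cR \otimes_\cR W_\cR$ carries $\rho_{V\otimes W}(g)\phi$ to $\Phi_{g,V} \otimes \Phi_{g,W}$, yielding an isomorphism of $\phi$-modules. To match the filtrations, set
$$\cF^x \coloneqq \ssum_{x'+x''=x} V_\cR^{x'} \otimes_\cR W_\cR^{x''} \subseteq (V\otimes W)_\cR.$$
This is a $\QQ$-filtration by sub-$\phi$-modules, whose associated graded in degree $x$ is $\bigoplus_{x'+x''=x} \gr^{x'}(V_\cR) \otimes_\cR \gr^{x''}(W_\cR)$, pure of slope $x$ as a direct sum of tensor products of pure $\phi$-modules whose slopes sum to $x$ (\cite[Corollary 1.6.4]{ked-relative}). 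Uniqueness of the slope filtration (\cite[Theorem 6.10]{ked-ann}) therefore identifies $\cF$ with $\cHN_g$ applied to $(V\otimes W)_\cR$, giving the required equality.

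The remaining points are routine. The trivial representation $\id = F$ is sent to $(\cR,\phi)$, which is unit-root (as $\phi$ acts by the identity in the basis $\{1\}$) and hence pure of slope $0$; its filtration therefore agrees with the identity object of $\QQ$-$\Fil_\cR$ prescribed in Remark~\ref{R:filtered-fiber}(iii). $F$-linearity of $\alpha \mapsto \alpha_\cR$ is immediate, and faithfulness follows from the injectivity of $V \hookrightarrow V_\cR$. I do not foresee a substantive obstacle: the proof rests entirely on standard functoriality, multiplicativity and uniqueness of Kedlaya's slope filtration, with the subtlest step being the functoriality itself, reduced to the pure-slope case via Lemma~\ref{L:nonzero-morphism}.
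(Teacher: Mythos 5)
Your proposal is correct, but takes a genuinely different route from the paper. The paper proves both functoriality and tensor compatibility by base changing along the faithfully flat embedding $\psi\colon\cR\to\tR(E,t)$ for an admissible extension $E$ with strongly difference-closed residue field, invoking the Dieudonn\'e--Manin decomposition there (Lemma~\ref{L:tilde-tensor}~(iv)) to obtain $\phi$-module complements, and then descending the conclusion back to $\cR$ using faithful flatness. You instead work entirely over $\cR$: for functoriality you apply Lemma~\ref{L:nonzero-morphism} to the composite $V_\cR^x\hookrightarrow V_\cR\xrightarrow{\alpha_\cR}W_\cR\twoheadrightarrow W_\cR/W_\cR^x$, which is a legitimate morphism of $\phi$-modules over $\cR$ (note that the lemma applies in one stroke here, since every slope of $V_\cR^x$ is strictly less than the smallest slope of $W_\cR/W_\cR^x$; there is no need to iterate along the filtration as you suggest); for the tensor structure you exhibit the candidate filtration $\cF$ directly and identify it with the slope filtration of $(V\otimes W)_\cR$ via the uniqueness clause of \cite[Theorem 6.10]{ked-ann}, using \cite[Corollary 1.6.4]{ked-relative} for purity of the graded pieces. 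Your route is shorter and avoids the descent step; the paper's passage to $\tR(E,t)$ buys the explicit Dieudonn\'e--Manin splitting but at the price of the flatness argument. One point you should make explicit: the identification $\gr^x_\cF\cong\bigoplus_{x'+x''=x}\gr^{x'}(V_\cR)\otimes_\cR\gr^{x''}(W_\cR)$ as $\phi$-modules (and the fact that each $\cF^x$ is a sub-$\phi$-module with free quotient) relies on the slope filtrations being split as $\cR$-module filtrations --- which holds because the successive quotients are finite free over the B\'ezout domain $\cR$, hence projective --- together with the observation that the natural $\phi$-equivariant surjection between free modules of equal rank is an isomorphism. The paper's approach sidesteps these bookkeeping points by working with the genuine direct sum decomposition over $\tR(E,t)$.
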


\begin{proof}
Let $\id=F$ be the trivial $G$-representation. Then $\id\otimes_F \cR=\cR$ is of rank $1$ with slope $0$, proving that $\HN_g$ preserves identity objects.
 
We claim that $\HN_g$ is functorial. Let $\alpha\in \Hom_G(V,W)$ be a morphism of finite-dimensional $G$-modules. We need to show that $\alpha_\cR(V_\cR^x)\subseteq W_\cR^x$ for all $x\in \QQ$. Choose by Lemma~\ref{L:admissible} an admissible extension $E$ of $K$ such that $\kk_E$ is strongly difference-closed. For any fixed $x\in\QQ$, we set $V_{\tR(E,t)}^x \coloneqq V_\cR^x\bigotimes_\cR {\tR(E,t)}$, and $W_{\tR(E,t)}^x \coloneqq W_\cR^x\bigotimes_\cR {\tR(E,t)}$. By Lemma~\ref{L:tilde-tensor} (iv), we have a decomposition $W_{\tR(E,t)}=W_{\tR(E,t)}^x \soplus W_{\tR(E,t)}'$ of $\phi$-modules over $\tR(E,t)$, where $W_{\tR(E,t)}^x$ (resp.\ $W_{\tR(E,t)}'$) has slopes less or equal to $x$ (resp.\ greater than $x$). By Lemma~\ref{L:nonzero-morphism}, the induced morphism $V_{\tR(E,t)}^x \to W_{\tR(E,t)}'$ of $\phi$-modules is zero. We thus have $\alpha_{\tR(E,t)} \big(V_{\tR(E,t)}^x \big)\subseteq W_{\tR(E,t)}^x$. Given any $\bv\in V_\cR^x$, we may write $\alpha_{\tR(E,t)}(\bv\otimes 1)=\alpha_\cR(\bv)\otimes 1=\bsum\limits_{i\in I} \bw_i\otimes s_i$ for some finite set $I$, with $\bw_i\in W_{\cR}^x$ and $s_i\in {\tR(E,t)}$ for all $i\in I$. Let $M$ be the $\cR$-submodule of $W_{\cR}$ generated by $\alpha_\cR(\bv)$ and the $\bw_i$, and let $N$ be the $\cR$-submodule of $W_{\cR}^x$ generated by the $\bw_i$. We then have $(M/N)\bigotimes_\cR \tR(E,t) \cong (M\bigotimes_\cR \tR(E,t))/ (N\bigotimes_\cR \tR(E,t))=0$. It follows that $M/N=0$ as $\cR\to \tR(E,t)$ is faithfully flat. We thus have $\alpha_\cR(\bv)\in N\subseteq W_{\cR}^x$, as desired.

It remains to show that $\HN_g$ preserves tensor products (in the sense of Remark~\ref{R:filtered-fiber} (ii)). Let $V$ and $W$ be two finite-dimensional $G$-modules, and suppose that the slope filtration of $(V_\cR,g\phi)$ (resp.\ $(W_\cR,g\phi)$) has jumps $\mu_1<\cdots<\mu_{l_V}$  (resp.\ $\nu_1<\cdots<\nu_{l_W}$). By Lemma~\cite[Lemma 16.4.3]{pde}, $\big((V\bigotimes_F W)_{\cR},g\phi \big)$ has jumps $\{\mu_i+\nu_j \mid 1\leq i\leq {l_V}, 1\leq j\leq {l_W}\}$. Fix any $1\leq l\leq {l_V}$ and $1\leq s\leq {l_W}$, we need to show
\begin{equation}\label{E:tensor}
 (V \stens_F W)_{\cR}^{\mu_{l}+\nu_{s}}= \ssum\limits_{x,y\in\QQ\atop\\ x+y=\mu_{l}+\nu_{s}} V_\cR^x \stens_\cR W_\cR^y,
\end{equation}
and we will do so in the remainder of the proof.

We claim that
\[\ssum\limits_{x,y\in\QQ\atop\\ x+y=\mu_{l}+\nu_{s}} V_\cR^x \stens_\cR W_\cR^y= \ssum\limits_{\mu_i+\nu_j\leq \mu_l+\nu_s\atop\\ 1\leq i\leq {l_V}, 1\leq j\leq {l_W}}V_\cR^{\mu_i} \stens_\cR W_\cR^{\nu_j}.\]
It is clear that the RHS is contained in the LHS, we now show the reverse containment. Let $x,y\in\QQ$ such that $x+y=\mu_{l}+\nu_{s}$. If $x<\mu_1$ or $y<\nu_1$, then $V_\cR^x\otimes_\cR W_\cR^y=0$ which is contained in the RHS. Otherwise, there exists the largest integer $1\leq i\leq {l_V}$ (resp.\ $1\leq j\leq {l_W}$) with the property that $\mu_i\leq x$ (resp.\ $\nu_j\leq y$). We then have $V_\cR^x\bigotimes_\cR W_\cR^y=V_\cR^{\mu_i}\bigotimes_\cR W_\cR^{\nu_j}$ and $\mu_i+\nu_j\leq \mu_{l}+\nu_{s}$. The claim is thus proved.

From Lemma~\ref{L:tilde-tensor} (iv), we see that
\[ \big(V \stens_F W \big)_{\tR(E,t)}^{\mu_{l}+\nu_{s}}= \Big( \ssum\limits_{\mu_i+\nu_j\leq \mu_{l}+\nu_{s}\atop\\ 1\leq i\leq {l_V}, 1\leq j\leq {l_W}}V_\cR^{\mu_i}\stens_\cR W_\cR^{\nu_j} \Big)\stens_\cR {\tR(E,t)}.\]
Therefore, we have 
\[ \big(V \stens  W \big)_{\cR}^{\mu_{l}+\nu_{s}}=\ssum\limits_{\mu_i+\nu_j\leq \mu_{l}+\nu_{s}\atop\\ 1\leq i\leq {l_V}, 1\leq j\leq {l_W}} V_\cR^{\mu_i}\stens_\cR W_\cR^{\nu_j}\]
by Lemma~\ref{L:tilde-tensor} (ii) and the fact that $\cR \to {\tR(E,t)}$ is faithfully flat. The desired equality~\eqref{E:tensor} then follows from the preceding claim.

\end{proof}

Let $(M,\Phi)$ be a $\phi$-module over $\tR$ of rank $d$. Then $\Phi$ is invertible since $\tR$ is inversive, and $(M,\Phi^{-1})$ is a $\phi^{-1}$-module over $\tR$. More explicitly, let $A\in\GL_d(\tR)$ be the matrix of action of $\Phi$ in some basis for $M$ over $\tR$. Then in the same basis, the matrix of action of $\Phi^{-1}$ is $\phi^{-1}(A^{-1})$. For example, if $M=V_{\tR}$ for some $V\in \Rep_F(G)$ and $\Phi=\psi(g)\phi$, then

\[\big(\psi(g)\phi \big) \cdot \big(\phi^{-1}(\psi(g^{-1}))\phi^{-1}\big)=1\]
in $G(\tR)\rtimes \la\phi\ra$ (cf.\ Remark~\ref{R:monoid}), which implies that $\Phi^{-1}=\phi^{-1}(\psi(g^{-1}))\phi^{-1}$.

Let $M$ be a standard $\phi$-module over $\tR$ of slope $\mu=s/r$ with $r>0$ and $(s,r)=1$. Namely, we have a standard basis $e_1,\cdots,e_r$ in which $\Phi$ acts via 
\[A=
\begin{pmatrix}\footnotesize
0  &  &  &\pi^s\\
1 &\ddots &  & \\
&\ddots  &\ddots &\\
&& 1 &0
\end{pmatrix}.
\]
Then 
\[\phi^{-1}(A^{-1})=
\begin{pmatrix}\footnotesize
0  &1 & \\
 &\ddots   &\ddots & \\
 & &\ddots &1\\
\pi^{-s} &  &    &0
\end{pmatrix},
\]
which implies that $(M,\Phi^{-1})$ is a standard $\phi^{-1}$-module pure of slope $-\mu$.

\begin{propo}\label{P:exact}
The functor $\HN_g\colon \Rep_F(G) \to \QQ\text{-}\Fil_\cR$ is exact.
\end{propo}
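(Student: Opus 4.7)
The plan is to verify directly the definition of exactness in $\QQ\text{-}\Fil_\cR$. Given a short exact sequence $0 \to V' \xrightarrow{\alpha} V \xrightarrow{\beta} V'' \to 0$ in $\Rep_F(G)$, the induced sequence $0 \to V'_\cR \to V_\cR \to V''_\cR \to 0$ of $\cR$-modules is exact, as base change along the flat map $F\to\cR$ preserves exactness. It then suffices to show that both $\alpha_\cR$ and $\beta_\cR$ are admissible with respect to the HN filtrations, i.e.\ to prove, for every $x\in\QQ$, the two identities
\[
\cHN_g^x(V'_\cR) = V'_\cR \cap \cHN_g^x(V_\cR) \qquad\text{and}\qquad \beta_\cR\bigl(\cHN_g^x(V_\cR)\bigr) = \cHN_g^x(V''_\cR),
\]
which express the compatibility of the slope filtration with the formation of sub- and quotient $\phi$-modules.

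For the first identity, the key observation is that $\cHN_g^x(V_\cR)$ can be characterized as the largest sub-$\phi$-module of $V_\cR$ all of whose slopes are $\leq x$. Indeed, for any such sub-$\phi$-module $N\subseteq V_\cR$, the composition $N\hookrightarrow V_\cR\twoheadrightarrow V_\cR/\cHN_g^x(V_\cR)$ is a morphism from a $\phi$-module of slopes $\leq x$ into one of slopes $> x$, and hence vanishes by Lemma~\ref{L:nonzero-morphism}, forcing $N\subseteq\cHN_g^x(V_\cR)$. Both inclusions in the first identity then follow at once: $\cHN_g^x(V'_\cR)$ is a sub-$\phi$-module of $V_\cR$ with slopes $\leq x$ and hence sits inside $\cHN_g^x(V_\cR)$, while $V'_\cR\cap \cHN_g^x(V_\cR)$ is a sub-$\phi$-module of $V'_\cR$ with slopes $\leq x$ and hence sits inside $\cHN_g^x(V'_\cR)$.

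For the second identity, set $F^x\coloneqq\beta_\cR\bigl(\cHN_g^x(V_\cR)\bigr)$; the inclusion $F^x\subseteq\cHN_g^x(V''_\cR)$ is immediate, as $F^x$ is a quotient of a $\phi$-module with slopes $\leq x$. To obtain equality, the strategy is to show that $F^\bullet$ coincides with the slope filtration of $V''_\cR$, then invoke its uniqueness. Using the strictness of $\alpha_\cR$ established above, the short exact sequence of filtered modules $0\to (V'_\cR,\cHN_g)\to (V_\cR,\cHN_g)\to (V''_\cR,F^\bullet)\to 0$ induces short exact sequences on graded pieces
\[
0 \longto \gr^\mu V'_\cR \longto \gr^\mu V_\cR \longto \gr^\mu F \longto 0,
\]
in which the first two terms are pure of slope $\mu$ by the slope filtration theorem. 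The remaining task, which I expect to be the main obstacle, is the purity claim: every quotient of a pure $\phi$-module of slope $\mu$ by a sub-$\phi$-module is again pure of slope $\mu$ (or zero).

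To settle this claim, I would pass to an admissible extension $E/K$ with $\kk_E$ strongly difference-closed provided by Lemma~\ref{L:admissible}. Over $\tR(E,t)$, the Dieudonn\'e--Manin decomposition (Lemma~\ref{L:tilde-tensor} (iv)) renders the category of pure slope $\mu$ $\phi$-modules semisimple, being equivalent to the category of finite-dimensional modules over the division algebra of endomorphisms of the standard pure slope $\mu$ object; every sub-$\phi$-module is then a direct summand, and the quotient is pure of slope $\mu$ over $\tR(E,t)$. The claim descends to $\cR$ by the faithful flatness of $\psi\colon\cR\to\tR(E,t)$ combined with Lemma~\ref{L:tilde-tensor} (i)--(ii), which guarantee that purity (and more generally the slope filtration) can be detected after extending scalars to $\tR(E,t)$, completing the proof.
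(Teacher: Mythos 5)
Your proof takes a genuinely different route from the paper's. The paper, for an arbitrary $G$-morphism $\alpha\colon V\to W$, passes to $\tR(E,t)$ over an admissible $E$, splits both sides via the Dieudonn\'e--Manin decomposition into the ``$\leq x$'' and ``$>x$'' parts, and must then show the component $\xi\colon V' \to W^x$ (high slopes to low slopes) vanishes. Since Lemma~\ref{L:nonzero-morphism} only kills morphisms from low to high, the paper invokes a $\phi^{-1}$-module trick: over $\tR$ the Frobenius is invertible, $\alpha$ is also a morphism of $\phi^{-1}$-modules, and passing to $\Phi^{-1}$ reverses all slopes, putting $\xi$ into the direction where the lemma applies. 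Your argument avoids this maneuver entirely by characterizing $\cHN_g^x(V_\cR)$ as the maximal sub-$\phi$-module with all slopes $\leq x$ and appealing to structural stability of that property under sub- and quotient-modules; this is a cleaner Harder--Narasimhan-formalism argument and would be a perfectly good alternative if the inputs were nailed down.

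That said, two inputs are currently gaps. First, you assert that $V'_\cR\cap\cHN_g^x(V_\cR)$ has ``slopes $\leq x$,'' but this is not immediate: it is a saturated sub-$\phi$-module of $\cHN_g^x(V_\cR)$, and you need the fact that saturated sub-$\phi$-modules of a module with all slopes $\leq x$ again have all slopes $\leq x$. This reduces (by intersecting with the slope filtration of $\cHN_g^x(V_\cR)$) to the statement that saturated sub-$\phi$-modules of a \emph{pure} slope $\mu$ module are pure of slope $\mu$ --- precisely the mirror of the quotient statement you flag as ``the main obstacle,'' and you never prove or cite it. Second, your semisimplicity argument for the quotient statement has a circularity: to say ``every sub-$\phi$-module is a direct summand'' in the category of pure slope $\mu$ modules you must first know the sub-$\phi$-module lies in that category, i.e.\ is pure of slope $\mu$, which is again the missing sub-module purity fact. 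Both facts are true, and follow for instance by applying Lemma~\ref{L:nonzero-morphism} to the first HN step of $N$ (forcing all slopes of $N$ to be $\geq\mu$) and to the last HN step of $M/N$ (forcing all slopes of $M/N$ to be $\leq\mu$), then using degree additivity $\deg N + \deg(M/N)=\mu\,\rk M$ to squeeze both to slope exactly $\mu$; alternatively, reduce to unit-root modules via $([r]_*\,\cdot)(-s)$, where saturated sub- and quotient-modules visibly stay unit-root. You should make one of these arguments explicit (or cite the relevant result from Kedlaya) before invoking semisimplicity.
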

\begin{proof}
Let $\alpha\in \Hom_G(V,W)$ be a morphism of finite-dimensional $G$-modules. We need to show that $\alpha_\cR(V_\cR^x)=\alpha_\cR(V_\cR)\cap W_\cR^x$ for all $x\in\QQ$. For any fixed $x\in\QQ$, the functoriality in Proposition~\ref{P:filtered-fiber} already implies that $\alpha_\cR(V_\cR^x)\subseteq \alpha_\cR(V_\cR)\cap W_\cR^x$. Thus, it suffices to show that for any non-zero element $\bv\in V_\cR$ such that $\alpha_\cR(\bv)\in W_\cR^x$, there exists $\bv'\in V_\cR^x$ with $\alpha_\cR(\bv)=\alpha_\cR(\bv')$. 

By Lemma~\ref{L:tilde-tensor} (iv), we have decompositions
\begin{equation}\label{E:tilde-decomposition}
V_{\tR(E,t)}=V_{\tR(E,t)}^x \soplus V_{\tR(E,t)}'~~~\text{and}~~~W_{\tR(E,t)}=W_{\tR(E,t)}^x \soplus W_{\tR(E,t)}'
\end{equation}
of $\phi$-modules over $\tR(E,t)$, in which $V_{\tR(E,t)}^x$ and $W_{\tR(E,t)}^x$ have slopes less or equal to $x$, while $V_{\tR(E,t)}'$ and $W_{\tR(E,t)}'$ have slopes greater than $x$. Notice that the composition
\[\begin{tikzcd}
\xi\colon V_{\tR(E,t)}' \ar[r] & V_{\tR(E,t)}^x \soplus V_{\tR(E,t)}' \ar[r,"\alpha_{\tR(E,t)}"] &W_{\tR(E,t)}^x \soplus W_{\tR(E,t)}' \ar[r] &W_{\tR(E,t)}^x
\end{tikzcd}\]
is a morphism of $\phi$-modules. We claim that $\xi=0$. We write $\Phi=\psi(g)\phi$, then $\Phi^{-1}=\phi^{-1}(\psi(g^{-1}))\phi^{-1}$. Since $\alpha$ is $G$-equivariant and $\phi^{-1}(\psi(g^{-1}))\in G(\tR(E,t))$, we have that $\alpha_{\tR}\colon (V_{\tR(E,t)},\Phi^{-1}) \to (W_{\tR(E,t)},\Phi^{-1})$ is a morphism of $\phi^{-1}$-modules. On the other hand, we also have decompositions of $\phi^{-1}$-modules as in~\eqref{E:tilde-decomposition}, together with the induced morphism $\xi\colon V_{\tR(E,t)}' \to W_{\tR(E,t)}^x$ of $\phi^{-1}$-modules. But in this case, $V_{\tR(E,t)}'$ has slopes less than $x$, while $W_{\tR(E,t)}^x$ has slopes greater or equal to $x$. It then follows from Lemma~\ref{L:nonzero-morphism} that $\xi=0$, as claimed.

Therefore, we find $\bv_1,\cdots,\bv_n \in V_\cR^x$ and $s_1,\cdots,s_n\in \tR(E,t)$ such that
\[ \alpha_{\tR(E,t)}(\bv\otimes 1)=\alpha_\cR(\bv)\otimes 1 =\ssum\limits_{i=1}^n \alpha_\cR(\bv_i)\otimes s_i.\]
Let $M$ be the submodule of $W_{\cR}$ generated by $\alpha_\cR(\bv)$ and the $\alpha_\cR(\bv_i)$, and let $N$ be the submodule generated by the $\alpha_\cR(\bv_i)$. We then have
\[(M/N)\stens_\cR \tR(E,t) \cong (M\stens_\cR \tR(E,t))/ (N\stens_\cR \tR(E,t))=0.\] 
It follows that $M/N=0$ as $\cR\to \tR(E,t)$ is faithfully flat, and hence $\alpha_\cR(\bv)=\sum\limits_{i=1}^n  r_i\alpha_\cR(\bv_i)\in W_\cR^x$ for some $r_i\in \cR$. Put $\bv'\coloneqq \sum\limits_{i=1}^n r_i\bv_i \in V_\cR^x$, we then have $\alpha_\cR(\bv')=\alpha_\cR(\bv)$, as desired.

\end{proof}

\subsection{Splittings of $\HN_g$}\label{S:splittings}

As before, we fix an element $g\in G(\cR)$. In \S~\ref{S:HN}, we have constructed a $\QQ$-filtered fiber functor $\HN_g\colon \Rep_F(G) \to \QQ$-$\Fil_\cR$. In this subsection, we show that $\HN_g$ is splittable whenever $G$ is smooth. Our strategy goes as follows. We first use Lemma~\ref{L:Q-to-Z} reducing $\HN_g$ to a $\ZZ$-filtered fiber functor $\HN_g^\ZZ$ to which Theorem~\ref{T:ziegler} is applicable. This $\HN_g^\ZZ$ then admits a $\ZZ$-splitting. Finally, in Theorem~\ref{T:Q-splitting}, we lift such a $\ZZ$-splitting to a $\QQ$-splitting of $\HN_g$.

\begin{defn}
We define the \emph{support} of $\HN_g$ by
\[\Supp(\HN_g) \coloneqq \{x\in \QQ \mid \gr^x_{\HN_g}(V)\neq 0~\text{for some}~ V\in \Rep_F(G)\}.\]
\end{defn}

Notice that $\Supp(\HN_g)$ is the set of jumps of the slope filtrations of $(V_\cR,g\phi)$ for all $V\in \Rep_F(G)$.

The general idea of the following construction was addressed in~\cite{ans}, after Definition 2.5 in loc.\ cit.; we will make it more explicit in our case.

\begin{cons}\label{C:d_g}
Let $W\in \Rep_F(G)$ be a faithful representation. Since $G$ is algebraic, $W$ is a tensor generator for $\Rep_F(G)$, i.e., any representation $V$ of $G$ is a subquotient of a direct sum of representations $\bigotimes^m (W \bigoplus W^\vee)$ for various $m\in \NN$. (See~\cite[Theorem 4.14]{lag}.) Therefore, $\Supp(\HN_g)$ is the additive subgroup of $\QQ$ finitely generated by the $\QQ$-jumps $\nu_1,\cdots,\nu_n$ of $(W_\cR,g\phi)$. We write $\nu_i=s_i/d_i$ with $d_i>0$ and $(s_i,d_i)=1$ for $1\leq i\leq n$. Let $d_g\in\NN$ be the least common multiple of the $d_i$. We then have $d_g\nu_i\in\ZZ$ for $1\leq i\leq n$.
In particular, we have
\[d_g=\min \{m\in\NN \mid mx\in \ZZ, \forall x\in \Supp(\HN_g)\}.\]
Therefore, $d_g$ is uniquely determined by $g$. We call $d_g$ the \emph{least common denominator} of $g$.

\end{cons}

\begin{rmk}
We conclude from Construction~\ref{C:d_g} that $\Supp(\HN_g)$ is isomorphic to $\ZZ$ or $0$. In fact, suppose that $d_g\nu_1,\cdots,d_g\nu_n$ are not all zero, we then let $D$ be the greatest common divisor of the non-zero ones. Otherwise, we let $D=0$. We then have that $d_g\cdot\Supp(\HN_g)=D\ZZ$. Hence,
\[ \Supp(\HN_g) \cong
\left\{\begin{array}{ll}
\ZZ    &\text{for}~  D\neq 0\\
0      &\text{for}~  D=0.
\end{array}\right.\]

\end{rmk}

\begin{lem}\label{L:Q-to-Z}
$\HN_g$ factors through a $\ZZ$-filtered fiber functor $\HN^{\ZZ}_g\colon \Rep_F(G) \to \ZZ$-$\Fil_\cR$ which makes the diagram
\[\begin{tikzcd}[scale=1.2]
\Rep_F(G) \ar[rr,"\HN_g"]\ar[d,swap,"\HN^{\ZZ}_g"]  && \QQ\text{-}\Fil_\cR\\
\ZZ\text{-}\Fil_\cR \ar[urr,,swap,"{[d_g^{-1}]}_*"]
\end{tikzcd}\]
commute.
\end{lem}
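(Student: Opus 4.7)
The construction is essentially a rescaling by $d_g$. For each $V \in \Rep_F(G)$ and $n \in \ZZ$, I would set
\[(\HN_g^\ZZ)^n(V_\cR) \coloneqq \HN_g^{n/d_g}(V_\cR),\]
and put $\HN_g^\ZZ(\alpha) \coloneqq \alpha_\cR$ on morphisms. The crucial input, already exploited in Construction~\ref{C:d_g}, is that every $V \in \Rep_F(G)$ is a subquotient of a tensor power of $W \oplus W^\vee$ for the fixed faithful representation $W$. Hence the jumps of the slope filtration of $(V_\cR, g\phi)$ lie in the additive subgroup of $\QQ$ generated by the jumps of $W$, which by the very definition of $d_g$ is contained in $d_g^{-1}\ZZ$. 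Consequently $\HN_g(V_\cR)$ is step-constant on each half-open interval $[n/d_g, (n+1)/d_g)$, so the family above is an exhaustive, separated $\ZZ$-filtration on $V_\cR$.

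Next I would verify that $\HN_g^\ZZ$ is a $\ZZ$-filtered fiber functor. Functoriality, admissibility (hence exactness, via Proposition~\ref{P:exact}), and the identity axiom all follow by reindexing the corresponding properties of $\HN_g$. The only step requiring a short argument is the tensor axiom: from Remark~\ref{R:filtered-fiber}(ii) applied to $\HN_g$ I have
\[\HN_g^{n/d_g}\bigl((V \stens W)_\cR\bigr) = \ssum_{x+y = n/d_g} \HN_g^x(V_\cR) \stens_\cR \HN_g^y(W_\cR).\]
Since the jumps of $\HN_g(V)$ and $\HN_g(W)$ both lie in $d_g^{-1}\ZZ$, the summand is constant on each product cell $[i/d_g,(i+1)/d_g) \times [j/d_g,(j+1)/d_g)$, so the sum may be restricted to pairs $(x,y) \in d_g^{-1}\ZZ \times d_g^{-1}\ZZ$. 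Writing $x = i/d_g$, $y = j/d_g$ with $i+j = n$ then yields the tensor axiom for $\HN_g^\ZZ$.

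Finally, the factorization $[d_g^{-1}]_* \circ \HN_g^\ZZ = \HN_g$ is a direct matching of jumps. If $\HN_g^\ZZ(V)$ has $\ZZ$-jumps $\jmath_1 < \cdots < \jmath_l$, then by construction these correspond exactly to the $\QQ$-jumps $\mu_i = \jmath_i/d_g$ of $\HN_g(V)$, and the formula in Construction~\ref{C:Z-to-Q} specialized to $\gamma = d_g^{-1}$ reproduces $\HN_g(V_\cR)$ level by level. I do not anticipate any serious obstacle: the lemma is a bookkeeping statement depending only on the minimality property $\Supp(\HN_g) \subseteq d_g^{-1}\ZZ$. The only mildly delicate point is the tensor-compatibility verification sketched above, where one must justify restricting the summation indices to $d_g^{-1}\ZZ$; this is where the choice of $d_g$ relative to a tensor generator is used in an essential way.
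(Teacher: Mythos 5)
Your construction $(\HN_g^\ZZ)^n(V_\cR) \coloneqq \HN_g^{n/d_g}(V_\cR)$ is exactly the paper's (the paper spells it out as a case-by-case definition in terms of the jumps $d_g\mu_i$, which coincides with your formula), and your verification of the fiber-functor axioms fills in details the paper's terse proof leaves implicit. This is the same approach and it is correct.
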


We remark that the functor ${[d_g^{-1}]}_*$ (cf.\ Construction~\ref{C:Z-to-Q}) is nothing but relabelling the jumps by multiplying all jumps with $d_g^{-1}$. In particular, this lemma implies that $\gr^x_{\HN_g}(V)=\gr^{d_gx}_{\HN^\ZZ_g}(V)$ for all $x\in\QQ$ and $V\in \Rep_F(G)$.

\begin{proof}[Proof of Lemma~\ref{L:Q-to-Z}]
Let $V\in\Rep_F(G)$ and let $\mu_1,\cdots,\mu_l$ be the $\QQ$-jumps of $(V_\cR,g\phi)$. We then have $d_g\mu_i\in\ZZ$ for all $i$. We have an increasing map
\begin{align*}
\cF_g \colon \ZZ &\longto \{\cR\text{-submodules of}~V_\cR\}\\
x &\longmapsto \cF_g^x(V_\cR),
\end{align*}
where
\[\cF_g^x(V_\cR)\coloneqq \left\{
\begin{array}{ll}
0       &\text{for}~x<d_g\mu_1\\
\cHN_g^{\mu_i}(V_\cR)    ~~~ &\text{for}~ d_g\mu_i\leq x <d_g\mu_{i+1}, 1\leq i\leq l-1\\
V_\cR   &\text{for}~x\geq d_g\mu_l.
\end{array} \right.\]
Then $(V_\cR,\cF_g)$ is a $\ZZ$-filtered module over $\cR$ with $\ZZ$-jumps $d_g\mu_1<\cdots<d_g\mu_l$. We thus have a $\ZZ$-filtered fiber functor
\begin{align*}
\HN^{\ZZ}_g \colon \Rep_F(G) &\longto \ZZ\text{-}\Fil_\cR\\
V  &\longmapsto (V_\cR,\cF_g),
\end{align*}
satisfying $\HN_g=[d_g^{-1}]_*\circ \HN_g^\ZZ$.
\end{proof}

By the definition of $\uAut^\otimes$ and Corollary~\ref{C:recover}, we have $\uAut^\otimes(\omega^G)(R)=\Aut^\otimes (\omega^G_R)\cong G(R)$ for all $R\in \Alg_k$. For any $R$-algebra $S$, we then have
\[\uAut^\otimes(\omega_R^G)(S)=\Aut^\otimes (\omega^G_R\otimes S)=\Aut^\otimes (\omega^G_S)\cong G_R(S).\]

\begin{propo}\label{P:Z-splitting}
Let $G$ be a smooth $F$-group. Then $\HN^{\ZZ}_g$ is splittable.
\end{propo}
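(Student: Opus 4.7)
The plan is to verify the hypothesis of Theorem~\ref{T:ziegler} with $\eta = \HN_g^\ZZ$ and $R=\cR$, which will immediately give splittability. So what I need to check is that the $\cR$-group functor $\uAut^\otimes_\cR(\forg\circ \HN_g^\ZZ)$ is pro-smooth over $\cR$.

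First, I would identify the composite $\forg\circ\HN_g^\ZZ\colon\Rep_F(G)\to\Mod_\cR$. Chasing through the construction in Lemma~\ref{L:Q-to-Z}, the underlying $\cR$-module of $\HN_g^\ZZ(V)$ is simply $V_\cR$, with no further data retained after forgetting the filtration. Hence $\forg\circ\HN_g^\ZZ$ coincides with the base-changed fiber functor $\omega^G\otimes\cR\colon V\mapsto V_\cR$, and the morphisms are sent to $\cR$-linear extensions $\alpha\mapsto \alpha_\cR$.

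Next, I would invoke Corollary~\ref{C:recover} in the base-changed form noted just before Proposition~\ref{P:Z-splitting}: for any $\cR$-algebra $S$ one has $\uAut^\otimes_\cR(\omega_\cR^G)(S)\cong G_\cR(S)$, functorially in $S$. Thus $\uAut^\otimes_\cR(\forg\circ\HN_g^\ZZ)\cong G_\cR$ as $\cR$-group functors. Since $G$ is assumed smooth over $F$, the base change $G_\cR = G\times_{\Spec F}\Spec\cR$ is smooth (hence in particular pro-smooth) over $\cR$.

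With both hypotheses of Theorem~\ref{T:ziegler} verified, the conclusion is that $\HN_g^\ZZ$ is splittable. There is no real obstacle here beyond the bookkeeping of identifying $\forg\circ\HN_g^\ZZ$ with $\omega^G\otimes\cR$ and applying the tannakian reconstruction in its base-changed form; the smoothness hypothesis on $G$ is used exactly to ensure that the resulting $\cR$-group $G_\cR$ is (pro-)smooth, which is what Ziegler's splitting theorem requires. The actual splitting $\tau$ of $\HN_g^\ZZ$ is then produced non-canonically by Theorem~\ref{T:ziegler}, and will be used in the subsequent construction of the slope morphism.
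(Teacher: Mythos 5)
Your proposal is correct and follows exactly the same route as the paper: identify $\forg\circ\HN_g^\ZZ$ with $\omega^G\otimes\cR$, apply the base-changed tannakian reconstruction to get $\uAut^\otimes_\cR(\forg\circ\HN_g^\ZZ)\cong G_\cR$, note that smoothness of $G$ over $F$ passes to $G_\cR$ over $\cR$, and invoke Theorem~\ref{T:ziegler}. Nothing to add.
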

\begin{proof}
Since $\forg\circ \HN^{\ZZ}_g=\omega^G\otimes \cR$, we have
\[\uAut^\otimes_\cR(\forg\circ \HN^{\ZZ}_g)= \uAut^\otimes_\cR(\omega^G_ \cR)\cong G_\cR.\]
Notice that $G_\cR$ is smooth over $\cR$, the proposition then follows from Theorem~\ref{T:ziegler}.

\end{proof}

\begin{thm}\label{T:Q-splitting}
Let $G$ be a smooth $F$-group. Then the $\QQ$-filtered fiber functor $\HN_g$ is splittable.
\end{thm}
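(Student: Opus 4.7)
The plan is to propagate the $\ZZ$-splitting of $\HN^{\ZZ}_g$ provided by Proposition~\ref{P:Z-splitting} to a $\QQ$-splitting of $\HN_g$, via the index-rescaling functor $[d_g^{-1}]_*$. By Proposition~\ref{P:Z-splitting}, there exists a $\ZZ$-graded fiber functor $\tau^\ZZ \colon \Rep_F(G) \to \ZZ\text{-}\Grad_\cR$ such that $\HN^{\ZZ}_g = \fil \circ \tau^\ZZ$. I would define
\[
\tau \coloneqq [d_g^{-1}]_* \circ \tau^\ZZ \colon \Rep_F(G) \longto \QQ\text{-}\Grad_\cR,
\]
where $[d_g^{-1}]_* \colon \ZZ\text{-}\Grad_\cR \to \QQ\text{-}\Grad_\cR$ is the graded analogue of the relabelling functor introduced in Construction~\ref{C:Z-to-Q}; concretely, it sends $\bigoplus_{n\in\ZZ} M_n$ to the $\QQ$-graded module whose component in degree $n/d_g$ is $M_n$ (and zero elsewhere).

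The key observation is the compatibility identity
\[
\fil \circ [d_g^{-1}]_* \;=\; [d_g^{-1}]_* \circ \fil \colon \ZZ\text{-}\Grad_\cR \longto \QQ\text{-}\Fil_\cR,
\]
which is immediate from unwinding Construction~\ref{C:Z-to-Q}: both composites send a $\ZZ$-graded module $M = \bigoplus_{n\in\ZZ} M_n$ to the $\QQ$-filtered module with $\cF^{x} = \bigoplus_{n \leq d_g x} M_n$. Granting this, I would then compute
\[
\fil \circ \tau \;=\; \fil \circ [d_g^{-1}]_* \circ \tau^\ZZ \;=\; [d_g^{-1}]_* \circ \fil \circ \tau^\ZZ \;=\; [d_g^{-1}]_* \circ \HN^{\ZZ}_g \;=\; \HN_g,
\]
where the last equality is Lemma~\ref{L:Q-to-Z}. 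Hence $\tau$ is a splitting of $\HN_g$ as soon as $\tau$ is itself a $\QQ$-graded fiber functor.

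To complete the argument, I would verify that $\tau$ is an exact faithful $F$-linear tensor functor. Since $[d_g^{-1}]_*$ only relabels the grading indices and leaves the underlying $\cR$-modules intact, it is fully faithful, exact, $F$-linear, and preserves tensor products; the tensor-compatibility reduces to the elementary equivalence $i+j=m \iff i/d_g + j/d_g = m/d_g$ between the convolution formulas defining the tensor product on $\ZZ$-graded and $\QQ$-graded modules. Combined with the corresponding properties of $\tau^\ZZ$, all axioms for $\tau$ hold. I do not expect any genuine obstacle in this proof: the heavy lifting was already done in Proposition~\ref{P:Z-splitting} (via Ziegler's theorem applied to $\HN^{\ZZ}_g$), and Theorem~\ref{T:Q-splitting} is essentially a formal consequence obtained by transporting that splitting through the bookkeeping functor $[d_g^{-1}]_*$.
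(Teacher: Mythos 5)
Your argument is correct and follows exactly the same route as the paper: take the $\ZZ$-splitting $\tau^\ZZ$ from Proposition~\ref{P:Z-splitting}, compose with $[d_g^{-1}]_*$, and use the identity $\fil\circ[d_g^{-1}]_*=[d_g^{-1}]_*\circ\fil$ together with Lemma~\ref{L:Q-to-Z} to conclude that $[d_g^{-1}]_*\circ\tau^\ZZ$ splits $\HN_g$. The paper states this more compactly via a commuting diagram, and leaves implicit the check that the composite is again a graded fiber functor, which you verify explicitly; the mathematical content is identical.
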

\begin{proof}
Choose a splitting $\tau_g\colon \Rep_F(G)\to \ZZ$-$\Grad_\cR$ of $\HN^{\ZZ}_g$ by Proposition~\ref{P:Z-splitting}, we then have a $\QQ$-graded fiber functor ${[d_g^{-1}]}_*\circ \tau_g\colon \Rep_F(G)\to \QQ$-$\Grad_\cR$. On the other hand, we have the diagram
\begin{equation}\label{D:splittings}
\begin{tikzcd}
&& \Rep_F(G)   \ar[drrr,"\HN_g"]\ar[dll,swap,"\tau_g"]\ar[d,swap,"\HN^{\ZZ}_g"]  \\
\ZZ\text{-}\Grad_\cR \ar[rr,"\fil"] \ar[drr,swap,"{[d_g^{-1}]}_*"] &&\ZZ\text{-}\Fil_\cR \ar[rrr,"{[d_g^{-1}]}_*"]  &&& \QQ\text{-}\Fil_\cR\\
 && \QQ\text{-}\Grad_\cR \ar[urrr,swap,"\fil"]
\end{tikzcd}
\end{equation}
with the upper-left, the upper-right and the bottom triangles commutative. Here, the commutativity of the upper-left (resp.\ upper-right ) triangle follows from Proposition~\ref{P:Z-splitting} (resp.\ Lemma~\ref{L:Q-to-Z}); for the bottom one, we note that ${[d_g^{-1}]}_*\circ \fil=\fil\circ {[d_g^{-1}]}_*$. Hence, the outer diagram also commutes, which implies that $\HN_g$ factors through the $\QQ$-graded fiber functor ${[d_g^{-1}]}_*\circ \tau_g$, as desired.

\end{proof}

\subsection{The slope morphism}\label{S:slope-morphism}

Let $R$ be a commutative ring with $1$ and let $\Gamma$ be an abelian group (not necessarily finitely generated). We first continue the discussions in \S~\ref{S:ziegler} to see how $\Gamma$-gradings over $R$ are related to $D(\Gamma)$-modules, for some affine group scheme $D(\Gamma)$ which will be defined as follows.

The group algebra $R[\Gamma]\coloneqq \bigoplus\limits_{\gamma\in \Gamma} Re_\gamma$ carries a Hopf algebra structure, where the comultiplication is given by $\Delta(e_\gamma)=e_\gamma\otimes e_\gamma$, the counit is given by $\epsilon(e_\gamma)=1$, and the antipode is given by $S(e_\gamma)=e_{-\gamma}$, for all $\gamma\in \Gamma$. We denote by $D_R(\Gamma)$ (or simply $D(\Gamma)$ when $R$ is clear in the context) the affine $R$-group scheme represented by $R[\Gamma]$.

\begin{lem} \cite[Proposition II. 2.5]{DG}\label{L:D}
$\Gamma$-$\Grad_R$ is equivalent to the category of $D(\Gamma)$-modules.
\end{lem}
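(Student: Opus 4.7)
The plan is to exhibit explicit quasi-inverse functors between the two categories, using the grouplike basis $\{e_\gamma\}_{\gamma\in\Gamma}$ of $R[\Gamma]$. Recall that a $D(\Gamma)$-module is, by Yoneda/representability, the same as a comodule over the Hopf algebra $R[\Gamma]$.

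\medskip

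\noindent\textbf{Step 1: From gradings to comodules.} Given $M=\bigoplus_{\gamma\in\Gamma} M_\gamma$ in $\Gamma$-$\Grad_R$, I would define an $R$-linear map
\[
\rho_M\colon M\longto M\stens_R R[\Gamma],\qquad \rho_M(m_\gamma)=m_\gamma\stens e_\gamma\quad\text{for}~m_\gamma\in M_\gamma,
\]
extended by $R$-linearity. The counit axiom $(\Id\otimes\epsilon)\circ\rho_M=\Id_M$ follows from $\epsilon(e_\gamma)=1$ and the fact that the decomposition $m=\sum m_\gamma$ is unique. Coassociativity $(\rho_M\otimes\Id)\circ\rho_M=(\Id\otimes\Delta)\circ\rho_M$ is immediate from $\Delta(e_\gamma)=e_\gamma\otimes e_\gamma$, since both sides send $m_\gamma$ to $m_\gamma\otimes e_\gamma\otimes e_\gamma$. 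A morphism of $\Gamma$-graded modules respects the decomposition and therefore intertwines the resulting coactions.

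\medskip

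\noindent\textbf{Step 2: From comodules to gradings.} Conversely, given an $R[\Gamma]$-comodule $(M,\rho)$, I would define
\[
M_\gamma\coloneqq \bigl\{m\in M\,\bigl|\bigr.\,\rho(m)=m\stens e_\gamma\bigr\}\sb M.
\]
Given any $m\in M$, write $\rho(m)=\sum_{\gamma}m_\gamma\stens e_\gamma$ for a (unique, by $R$-linear independence of the $e_\gamma$) finitely supported family $(m_\gamma)_{\gamma\in\Gamma}$ in $M$. The counit axiom gives $m=\sum_\gamma m_\gamma$, and coassociativity rewrites as
\[
\ssum_\gamma \rho(m_\gamma)\stens e_\gamma=\ssum_\gamma m_\gamma\stens e_\gamma\stens e_\gamma,
\]
from which I would conclude $\rho(m_\gamma)=m_\gamma\stens e_\gamma$, i.e.\ $m_\gamma\in M_\gamma$, again using $R$-linear independence of the $e_\gamma$. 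Uniqueness of the $m_\gamma$ in the expansion of $\rho(m)$ yields $M=\bigoplus_\gamma M_\gamma$. A comodule morphism $f\colon M\to N$ satisfies $\rho_N\circ f=(f\otimes\Id)\circ\rho_M$, which after projecting onto each component $e_\gamma$ forces $f(M_\gamma)\sb N_\gamma$, so $f$ is a morphism in $\Gamma$-$\Grad_R$.

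\medskip

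\noindent\textbf{Step 3: Quasi-inverseness.} The two constructions are mutually inverse essentially by inspection: starting from a grading, constructing $\rho_M$ and then extracting the $\gamma$-isotypic pieces recovers exactly $M_\gamma$; starting from a comodule, regrading and reassembling reproduces the original coaction on each $M_\gamma$. Both assignments are clearly functorial, which yields the claimed equivalence of categories.

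\medskip

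I do not anticipate a genuine obstacle here. The only point requiring mild care is the passage from a comodule back to a direct sum decomposition, where one must justify the unique expansion $\rho(m)=\sum m_\gamma\stens e_\gamma$; this rests on the fact that $\{e_\gamma\}_{\gamma\in\Gamma}$ is a free $R$-basis of $R[\Gamma]$, and that the grouplike relation $\Delta(e_\gamma)=e_\gamma\stens e_\gamma$ makes $R[\Gamma]$ behave as a ``pointwise diagonal'' coalgebra. Everything else is formal manipulation with the comodule axioms.
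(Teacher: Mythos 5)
Your proof is correct. The paper does not actually prove this lemma; it simply cites Demazure--Gabriel, Proposition II.2.5, so there is no in-paper argument to compare against. Your approach --- identifying $D(\Gamma)$-modules with $R[\Gamma]$-comodules, defining the coaction on a graded module by $m_\gamma\mapsto m_\gamma\otimes e_\gamma$, and conversely recovering the grading by expanding $\rho(m)$ along the free $R$-basis $\{e_\gamma\}$ and exploiting the grouplike relation $\Delta(e_\gamma)=e_\gamma\otimes e_\gamma$ --- is precisely the standard proof that the cited reference gives. One minor point you could spell out slightly more in Step 2 is the directness of the sum $\bigoplus_\gamma M_\gamma$: surjectivity of $\bigoplus_\gamma M_\gamma\to M$ comes from the counit axiom ($m=\sum_\gamma m_\gamma$), while injectivity comes from applying $\rho$ to a vanishing sum $\sum_\gamma n_\gamma=0$ with $n_\gamma\in M_\gamma$ and invoking $R$-linear independence of the $e_\gamma$; but this is the same freeness observation you already use to extract the $m_\gamma$, so it is not a gap.
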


\begin{cor}\label{C:D-to-G_m}
For any $\gamma\in\QQ$, the functor $[\gamma]_*\colon \ZZ\text{-}\Grad_R \to \QQ\text{-}\Grad_R$ corresponds to the character $\chi_\gamma\colon \DD_R\to \GG_{m,R}$.
\end{cor}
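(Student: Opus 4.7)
The plan is to unwind both sides under the equivalence of Lemma~\ref{L:D} and check they coincide as functors $\GG_{m,R}\text{-}\Mod \to \DD_R\text{-}\Mod$. Since $R[\ZZ]\cong R[t,t^{-1}]$ represents $\GG_{m,R}$, that equivalence identifies $\ZZ\text{-}\Grad_R$ with the category of $\GG_{m,R}$-modules and $\QQ\text{-}\Grad_R$ with the category of $\DD_R$-modules. The character $\chi_\gamma$ enters as restriction of scalars $\chi_\gamma^\ast$ along this morphism, so the claim is that $[\gamma]_\ast$ and $\chi_\gamma^\ast$ agree under Lemma~\ref{L:D}.

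First, I would make $\chi_\gamma$ explicit. By Cartier duality, morphisms $\DD_R\to \GG_{m,R}$ correspond to group homomorphisms $\ZZ\to \QQ$ of the respective character groups, and $\chi_\gamma$ is by definition the one sending $1\mapsto \gamma$. On Hopf algebras this is the map $R[\ZZ]\to R[\QQ]$ given by $e_n\mapsto e_{n\gamma}$. Translating via Lemma~\ref{L:D}: given a $\GG_{m,R}$-module $M=\bigoplus_{n\in\ZZ} M_n$, the restriction $\chi_\gamma^\ast M$ is the $\DD_R$-module whose underlying $R$-module is $M$ and whose $\QQ$-grading places $M_n$ in degree $n\gamma$.

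Next, I would trace through the definition $[\gamma]_\ast \coloneqq \gr\circ[\gamma]_\ast\circ\fil$ from Construction~\ref{C:Z-to-Q}. Starting from $M=\bigoplus_n M_n$ with $\ZZ$-jumps $\jmath_1<\cdots<\jmath_l$, the filtration $\fil(M)$ has $\cF^{\jmath_i}=\bigoplus_{j\le \jmath_i}M_j$; the functor $[\gamma]_\ast$ on filtered modules leaves these submodules unchanged but relabels the $i$-th jump from $\jmath_i$ to $\jmath_i\gamma$; finally $\gr$ returns the $\QQ$-graded module whose degree-$\jmath_i\gamma$ piece is $\cF^{\jmath_i}/\cF^{\jmath_i-}=M_{\jmath_i}$. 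Thus $[\gamma]_\ast M$ places $M_n$ in $\QQ$-degree $n\gamma$, agreeing with $\chi_\gamma^\ast M$ on objects. Both functors act as the identity on underlying $R$-linear maps, so the agreement extends to morphisms.

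The main obstacle is purely bookkeeping: one must keep the contravariance of $D(\cdot)$ straight so that $\chi_\gamma\colon \DD_R\to \GG_{m,R}$ really corresponds, in the right direction, to a functor $\ZZ\text{-}\Grad_R\to \QQ\text{-}\Grad_R$ (restriction rather than induction), and one must interpret ``corresponds'' in the statement to mean that the two induced functors coincide under Lemma~\ref{L:D}, rather than literally equating a functor with a morphism of $R$-group schemes.
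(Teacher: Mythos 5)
Your proposal is correct and follows essentially the same route as the paper: both pass through the comodule interpretation of Lemma~\ref{L:D}, identify the Hopf algebra map $R[\ZZ]\to R[\QQ]$, $e_n\mapsto e_{n\gamma}$, as the one underlying $\chi_\gamma$, and verify that $[\gamma]_*$ places $M_n$ in $\QQ$-degree $n\gamma$. Your write-up is slightly more explicit about unwinding $[\gamma]_*=\gr\circ[\gamma]_*\circ\fil$ and about the contravariance of $D(\func)$, whereas the paper simply reads off the regrading and exhibits the corresponding commutative diagram of coactions, but the two arguments are the same in substance.
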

\begin{proof}
Let $M\in \ZZ\text{-}\Grad_R$. By Lemma~\ref{L:D}, we may write $M=\bigoplus\limits_{n\in\ZZ} M_n$ as a direct sum of eigenmodules. By construction, we have $[\gamma]_*(M)=\bigoplus\limits_{n\in\ZZ} ([\gamma]_*(M))_{\gamma n}$ with $([\gamma]_*(M))_{\gamma n}=M_n$ for all $n$, which is also a decomposition into eigenmodules. Therefore, giving $[\gamma]_*$ is equivalent to giving the commutative diagram
\[\begin{tikzcd}
M_n   \ar[r,equal] \ar[d]  &   ([\gamma]_*(M))_{\gamma n}\ar[d]\\
M_n\stens_R R[\ZZ]    \ar[r]    &([\gamma]_*(M))_{\gamma n}\stens_R R[\QQ]
\end{tikzcd}\]
of $R$-modules for all $n\in\ZZ$ such that $M_n\neq 0$. Here, the left (resp.\ the right) vertical arrow is given by $m\mapsto m\otimes e_n$ (resp.\ $m\mapsto m\otimes e_{\gamma n}$). The diagram then corresponds to $R[\ZZ]\to R[\QQ],~e_1\mapsto e_\gamma$, as desired.
\end{proof}

We now apply the preceding discussions to the functors constructed in \S~\ref{S:splittings}, following~\cite[4]{kott1}.

\begin{cons}\label{C:slope-morphism}
Let $g\in G(\cR)$, we fix a splitting $\tau_g$ of $\HN_g^\ZZ$ given by Proposition~\ref{P:Z-splitting}. For any $(V,\rho)\in\Rep_F(G)$, $\tau_g$ gives a decomposition of $V_\cR$, which induces a morphism $\lambda_{\rho,g}\colon \GG_{m,\cR} \to \GL_{V,\cR}$ by Lemma~\ref{L:D}. Let $S$ be an $\cR$-algebra and let $a\in \GG_{m,\cR}(S)$. We then have a family
\[\big\{\lambda_{\rho,g}(a)\colon V_S\to V_S \mid (V,\rho)\in\Rep_F(G) \big\}\]
$  $of $S$-linear maps. Because $\tau_g$ is a tensor functor, this family satisfies conditions (i,ii,iii) in Theorem~\ref{T:recover}. We thus find a unique element $b\in G_\cR(S)$ such that $\lambda_{\rho,g}(a)=\rho(b)$ for all $(V,\rho)\in\Rep_F(G)$. The assignment $a\mapsto b$ is functorial in $S$ since both $\lambda_{\rho,g}$ and $\rho$ are functorial.  We then have a morphism of $\cR$-groups
\[\lambda_g\colon \GG_{m,\cR}  \longto G_\cR,\]
which is said to be the \emph{$\ZZ$-slope morphism} of $g$. 

By Corollary~\ref{C:D-to-G_m}, ${[d_g^{-1}]}_*$ gives a unique morphism $\chi_{d_g^{-1}}\colon \DD_\cR \to \GG_{m,\cR}$. We define 
\[\nu_g\coloneqq \lambda_g\circ \chi_{d_g^{-1}} \colon \DD_\cR  \longto G_\cR,\]
which is said to be the \emph{$\QQ$-slope morphism} of $g$.
\end{cons}

The following example demonstrates explicitly how $\lambda_g$ and $\nu_g$ are related to the splittings constructed in \S~\ref{S:splittings} (cf.\ Diagram~\ref{D:splittings}).

\begin{exmp}
Let $(V,\rho)\in\Rep_F(G)$ and suppose that the slope filtration of $(V_\cR,g\phi)$ is
\[0\subseteq V_\cR^{\mu_1} \subseteq \cdots \subseteq V_\cR^{\mu_l}=V_\cR\]
with jumps $\mu_1<\cdots <\mu_l$. Let
\begin{equation}\label{E:decomp}
\textstyle  V_\cR=V_{\cR,\mu_1}\soplus\cdots \soplus V_{\cR,\mu_l}
\end{equation}
be a splitting of $\HN_g(V)$, i.e., we have $\bigoplus\limits_{i=1}^j V_{\cR,\mu_i}=V_\cR^{\mu_j}$ for all $1\leq j\leq l$. 

First, we fix $1\leq i\leq l$. Let $S\in \Alg_\cR$ and $a\in \DD_\cR(S)$, then $\rho\circ \nu_g(a)$ acts on $V_{\cR,\mu_i}\stens_\cR S$ via multiplication by $\chi_1(a)^{\mu_i}$. On the other hand, $\tau_g$ induces the same decomposition~\eqref{E:decomp} of $V_\cR$. Furthermore, $\rho\circ\lambda_g(b)$ acts on $V_{\cR,\mu_i}$ via multiplication by $b^{d_g\mu_i}$, for all $b\in \GG_{m,\cR}(S)$. Then on $V_{\cR,\mu_i}\stens_\cR S$, we have
\[\rho\circ \nu_g(a)=\chi_1(a)^{\mu_i}=\big(\chi_{d_g^{-1}}(a)^{d_g}\big)^{\mu_i}=\rho\circ \lambda_g\big( \chi_{d_g^{-1}}(a)\big)=\rho\circ \lambda_g\circ \chi_{d_g^{-1}}(a)\]
We next apply this result to all $1\leq i\leq l$. Since $V_\cR=\bigoplus\limits_{i=1}^l V_{\cR,\mu_i}$, we conclude that 
$\rho\circ \nu_g=\rho\circ \lambda_g\circ \chi_{d_g^{-1}}$. It follows that $\nu_g=\lambda_g\circ \chi_{d_g^{-1}}$ once we choose a faithful representation, as is expected from the definition of $\nu_g$.
\end{exmp}

If $G=\GL_V$ for some $V\in \Vect_F$, we consider the standard representation $(V,\rho)$ of $G$ where $\rho$ is the identity. The discussion in the above example then indicates that the image of $\lambda_g$ is contained in a split maximal torus in $G_\cR$; we conjecture that this property holds true for an arbitrary split reductive $F$-group $G$, and we shall give one more evidence as follows.

\begin{exmp}\label{E:SL_V}
Fix a $d$-dimensional $F$-vector space $V$. For any $R\in\Alg_F$, we define
\[\SL_V(R) \coloneqq \{ g\in \Aut_R(V_R)=\GL_V(R) \mid \det(g)=1\} .\]
The affine algebraic $F$-group $\SL_V$ is called the \emph{special linear group} (associated to $V$). 

Fix an arbitrary $g\in \SL_V(\cR)$. With the notation as in Construction~\ref{C:Phi'}, we suppose the jumps of the slope filtration of $(V_\cR,\Phi_g)$ are $\mu_1,\cdots,\mu_l$ and $\xi_g(V)=\bigoplus\limits_{i=1}^l V_{\cR,\mu_i}$. For each $i$, we write $r_i=\rk_\cR (V_{\cR,\mu_i})$, then the $r_i$-th exterior product $\Lambda^{r_i}(V_{\cR,\mu_i})$ is of rank $1$. We choose a generator $m_i$, then $\Lambda^{r_i}(\Phi_{g,\mu_i})(m_i)=f_i m_i$ for some $f_i\in \cR^\times=(\cE^\dagger)^\times$. Let $\nu$ be the valuation of the $1$-Gauss norm on $\cE^\dagger$. We then have $\mu_i=\frac{\nu(f_i)}{r_i}$ by~\cite[Definition 1.4.4]{ked-relative}. 

Let $e_1,\cdots,e_d$ be a basis for $V$ over $F$, and let $A\in \SL_d(\cR)$ be the matrix of action of $\Phi_g$ in $e_1\otimes 1,\cdots, e_d\otimes 1$. Let $U\in \GL_d(\cR)$ represent a change-of-basis over $\cR$. Then in the new basis, the matrix of action of $\Phi_g$ is $U^{-1}A \phi(U)$. Notice that $\det(U)\in (\cE^\dagger)^\times$ and $\phi$ preserves $\nu$, we then have
\[\nu\big( \det(U^{-1}A \phi(U) )\big)= \nu \big(\det(U^{-1}) \det(A) \phi(\det(U))\big)=\nu(\det(A)),  \]
which implies that the valuation of the determinant of the matrix of action of $\Phi_g$ is invariant under change-of-basis. We denote by $\nu(\det (\Phi_g))$ this invariant. In particular, we have $\nu(\det (\Phi_g))=0$ since $\det(A)=1$ by assumption. We thus have
\[0=\nu(\det (\Phi_g))=\nu(\det (\Phi'_g))=\nu(f_1)+\cdots +\nu(f_l)=r_1\mu_1+\cdots +r_l\mu_l.\]

Let $S\in \Alg_\cR$ and $t\in \GG_{m,\cR}(S)$. Since $\lambda_g(t)$ acts on each $V_{\cR,\mu_i}\stens_\cR S$ via multiplication by $t^{d_g\mu_i}$ where $d_g$ is the least common denominator of $g$, we then have
\[\det(\lambda_g(t))= t^{d_g(r_1\mu_1+\cdots  r_l \mu_l)}=1.\]
Therefore, the image of $\lambda_g$ is contained in a split maximal torus in $\SL_{V,\cR}$.
\end{exmp}

\section{$G$-$(\phi,\nabla)$-modules over the Robba ring}\label{C:G-phi-nabla}

In this section, we fix an affine algebraic group $F$-group $G$.

\subsection{Definition and an identification}\label{S:G-phi-nabla}

Let $R\in\{\cE^\dagger,\cR,\tE^\dagger,\tR\}$ equipped with an absolute Frobenius lift $\phi$.

\begin{defn}\label{D:GIsoc}
A \emph{$G$-$(\phi,\nabla)$-module} over $R$ is an exact faithful $F$-linear tensor functor
\[\I\colon \Rep_F(G) \longto \mathbf{Mod}^{\phi,\nabla}_R\]
which satisfies $\forg \circ \I = \omega^G \otimes R$, where $\forg \colon \phimodR \to \Mod_R$ is the forgetful functor. The category of $G$-$(\phi,\nabla)$-modules over $R$ is denoted by $\GphinabR$, whose morphisms are morphisms of tensor functors. A $G$-$(\phi,\nabla)$-module $\I$ over $R$ is called \emph{unit-root} if $\I(V)$ is a unit-root $(\phi,\nabla)$-module over $R$ for all $V\in \Rep_F(G)$.
\end{defn}

We put $\partial=\partial_t=d/dt$, the usual derivation on $R$. We also put
\[\muu=\muu (\phi,t)\coloneqq \partial(\phi(t)).\]
Let $\Omega_R^1=\Omega_{R/K}^1$ be the free $R$-module generated by the symbol $dt$, with the $K$-linear derivation $d\colon R \longto \Omega^1_R,~~~ f\longmapsto  \partial(f) dt$. We also define a $\phi$-linear endomorphism
\begin{align*}
d\phi\colon  \Omega^1_R \longto  \Omega^1_R,\;\;\;\;\; fdt  \longmapsto  \muu\phi(f)dt.
\end{align*} 

Given a finite-dimensional representation $\rho\colon G\to \GL_V$, we have a morphism $\fg\to \fgl_V$ of Lie algebras, and hence a morphism $\fg_R \to \fgl_V\stens R\cong \End_R(V_R)$ of Lie algebras over $R$ (which is injective if $\rho$ is a closed embedding). For any $X\in \fg_R$, we denote by $X$ the action of $\Lie(\rho)(X)$ on $V_R$ (see Remark~\ref{R:Lie-endo}).
We define the \emph{connection} $\nabla_X$ of $V_R$ (associated to $X$) by
\begin{align*}
\nabla_X=\nabla_{X,V}   \colon  V_R &\longto  V_R \stens_R \Omega_R^1,\\
 v\otimes f &\longmapsto (v\otimes 1)\otimes d(f) + X(v\otimes f)\otimes dt.
\end{align*}
Since $fdt\mapsto f$ gives an isomorphism $\Omega_R^1 \cong R$, we have an isomorphism $\iota\colon V_R\stens_R \Omega^1_R\to V_R$. Let $\Theta_X=\Theta_{X,V}$ be the \emph{differential operator associated to $\nabla_X$} given by the following composition
\[\begin{tikzcd}
V_R \ar[rr,"\nabla_X"]  && V_R\stens_R \Omega^1_R \ar[r,"\iota"]  &V_R.
\end{tikzcd}\]
We have that $\Theta_X(v\otimes f)=v\otimes \partial(f)+X(v\otimes f)$ for all $v\otimes f\in V_R$. Moreover, we have the following easy lemma.

\begin{lem}\label{L:tensor-Theta}
Let $V$ and $W$ be finite-dimensional $G$-representations and let $\alpha\in \Hom_G(V,W)$.  We then have 
\[\alpha_R\circ\Theta_{X,V}=\Theta_{X,W}\circ\alpha_R,\;\;\;\;\text{and}\;\;\;\;\Theta_{X,V\otimes W}=\Theta_{X,V}\otimes \Id_{W_\cR}+ \Id_{V_\cR}\otimes \Theta_{X,W}.\]
\end{lem}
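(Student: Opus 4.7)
The plan is to verify both identities by direct computation from the explicit formula $\Theta_{X,V}(v\otimes f) = v\otimes \partial(f) + \Lie(\rho_V)(X)(v\otimes f)$ given just before the lemma. Each identity decomposes into a ``derivation'' part (coming from $\partial$) and an ``$X$-action'' part (coming from $\Lie(\rho_V)(X)$), and these can be handled separately. I write $\Theta_{X,V} = \partial_V + \Lie(\rho_V)(X)$, where $\partial_V(v\otimes f) \coloneqq v \otimes \partial(f)$ is $K$-linear (but not $R$-linear).

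For the first identity, the derivation part is trivial: since $\alpha_R = \alpha\otimes \Id_R$ is induced from an $F$-linear map, it commutes with $\partial_V$ on pure tensors, namely $\alpha_R(v\otimes \partial(f)) = \alpha(v)\otimes \partial(f) = \partial_W(\alpha_R(v\otimes f))$. The $X$-action part is exactly the last sentence of Remark~\ref{R:Lie-endo}: for any $G$-equivariant $\alpha$, one has $\alpha_R \circ \Lie(\rho_V)(X) = \Lie(\rho_W)(X) \circ \alpha_R$ for all $X\in\fg_R$. Adding these two relations yields $\alpha_R \circ \Theta_{X,V} = \Theta_{X,W} \circ \alpha_R$.

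For the second identity, the key input is the coproduct formula $\Lie(\rho_{V\otimes W})(X) = \Lie(\rho_V)(X)\otimes \Id_{W_R} + \Id_{V_R}\otimes \Lie(\rho_W)(X)$, which follows by applying $\Lie(-)(R)$ to the identity $\rho_{V\otimes W}(g) = \rho_V(g)\otimes \rho_W(g)$ at the level of group-valued points over $R[\varepsilon]$ and reading off the coefficient of $\varepsilon$ (equivalently, it is the standard fact that $\Lie(\rho_V\otimes \rho_W)$ is the tensor coderivation of $\Lie(\rho_V)$ and $\Lie(\rho_W)$). Under the canonical identification $(V\otimes W)_R \cong V_R \otimes_R W_R$, a decomposable tensor $(v\otimes f)\otimes_R (w\otimes g)$ corresponds to $(v\otimes w)\otimes fg$; applying $\partial_{V\otimes W}$ and using the Leibniz rule $\partial(fg)=\partial(f)g+f\partial(g)$ shows that $\partial_{V\otimes W} = \partial_V\otimes \Id_{W_R} + \Id_{V_R}\otimes \partial_W$. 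Combining the derivation and $X$-action parts gives the asserted Leibniz-type identity for $\Theta_X$.

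There is no real obstacle here; the only care required is bookkeeping with the identification $(V\otimes W)_R \cong V_R\otimes_R W_R$ and noting that both sides of each identity are $R$-linear (for the first) or additive (for the second), so that verification on pure tensors $v\otimes f$ and $(v\otimes 1)\otimes(w\otimes 1)$ suffices.
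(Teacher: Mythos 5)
Your proof is correct and is essentially the paper's own argument, spelled out in full: the first identity reduces, after discarding the $\partial$-part that $\alpha_R$ trivially commutes with, to the equivariance $\alpha_R\circ\Lie(\rho_V)(X)=\Lie(\rho_W)(X)\circ\alpha_R$ of Remark~\ref{R:Lie-endo} (exactly what the paper cites), and the second identity is the ``direct computation'' the paper alludes to, namely the Leibniz rule for $\partial$ combined with the coderivation property of $\Lie(\rho_{V\otimes W})(X)$. One small inaccuracy in your closing sentence: neither side of the first identity is $R$-linear as a map $V_R\to W_R$ (since $\Theta_{X,V}$ involves $\partial$); what justifies checking on pure tensors $v\otimes f$ is $F$-linearity (or just additivity) of both sides, and for the second identity the Leibniz-rule computation itself is what shows the would-be tensor-product operator is well-defined on $V_R\otimes_R W_R$ — the ``well-definedness'' and the verification are the same computation, not two separate steps.
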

\begin{proof}
The first equality holds since $\alpha_R$ commutes with $X$ (see Remark~\ref{R:Lie-endo}), and the second one follows from a direct computation.
\end{proof}

\begin{cons}\label{C:dlog}
We consider the $R$-algebra morphism
\[\hat\partial\colon  R\longto R[\epsilon],~~~ r\longmapsto r+\partial(r)\epsilon,\]
which induces a morphism $G(\hat\partial)\colon G(R)\to G(R[\varepsilon])$. Notice that $\pi_R\circ \hat\partial =\Id_R$, we then have $G(\pi_R)\circ G(\hat\partial)= \Id_{G(R)}$, in particular, $G(\pi_R)\big(G(\hat\partial) (g)\big)=g$. Identifying $g$ with its image in $G(R[\varepsilon])$ induced by the inclusion $R\to R[\varepsilon],r\mapsto r$, we then have
\[G(\hat\partial)(g)g^{-1}\in \Ker G(\pi_R)=\fg_R.\]
For $g\in G(R)$, we define $\partial  (g)\coloneqq G(\hat\partial) (g)\in G(R[\epsilon])$, and put 
\[\dlog(g)\coloneqq \partial(g)g^{-1}\in \fg_R.\]

\end{cons}

\begin{exmp}
Let $G=\GL_d$ for some $d\in\NN$, and let $B\in G(R)$. We have that $\dlog(B)=I_d+\varepsilon \partial(B)B^{-1}$, where $I_d$ is the $d\times d$ identity matrix and $\partial$ acts on $B$ entry-wise. Using the isomorphism $\Lie(G)(R)=\{I_d+ \varepsilon B \mid B\in \Mat_{d,d}(R)\}\cong \{B\mid B\in \Mat_{d,d}(R)\}$, we may identify $\dlog(B)$ with $\partial(B)B^{-1}$.
\end{exmp}

\begin{defn}
\begin{itemize}
\item[(i)] We define the \emph{gauge transformation}
\[\Gamma_g\colon \fg_R  \longto \fg_R,\;\;\;\;\; X \longmapsto \Ad(g)(X)-\dlog(g),\]
where $\Ad\colon G\to \GL_\fg$ is the adjoint representation.
\item[(ii)] We define $\bB^{\phi,\nabla}(G,R)$ to be the category whose objects are $(g,X)\in G(R)\times \fg_R$ satisfying $X=\Gamma_g(\mu\phi(X))$, and whose morphisms $(g,X) \to (g',X')$ are elements $x\in G(R)$ such that $g'=xg \phi(x^{-1})$ and $X'=\Gamma_x(X)$.
\end{itemize} 
\end{defn}

\begin{lem}\label{L:phi-nabla}
Let $(g,X)\in \bB^{\phi,\nabla}(G,R)$. Then $(V_R,g\phi,\nabla_X)$ is a $(\phi,\nabla)$-module over $R$ for all $V\in \Rep_F(G)$.
\end{lem}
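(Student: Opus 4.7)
The first claim is that $(V_R, g\phi) \in \phimodR$ for every $V \in \Rep_F(G)$. This is immediate: $\rho(g) \in \GL_V(R)$ is invertible, so the image of $g\phi$ spans $V_R$ over $R$, and the functoriality collected in Section~\ref{S:G-phi} shows that the assignment $V \mapsto (V_R, g\phi)$ defines an element $\I(g) \in \GphimodR$. Hence the content of the lemma is the existence of $\nabla_X$ as a connection, together with its Frobenius compatibility with $g\phi$.

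That $\nabla_X \colon V_R \to V_R \otimes_R \Omega_R^1$ satisfies the Leibniz rule is a direct computation from the defining formula $\nabla_X(v \otimes f) = (v \otimes 1) \otimes d(f) + X(v \otimes f) \otimes dt$, using the Leibniz rule for $d$ and the $R$-linearity of $X$ viewed as an endomorphism of $V_R$ (Remark~\ref{R:Lie-endo}). The main step is then Frobenius compatibility. I would unwind the intrinsic condition $X = \Gamma_g(\muu\phi(X))$ as
\[
X + \dlog(g) \;=\; \muu \cdot \Ad(g)\bigl(\phi(X)\bigr) \qquad \text{in } \fg_R,
\]
and push it through $\Lie(\rho) \colon \fg_R \to \End_R(V_R)$ for each $(V,\rho) \in \Rep_F(G)$. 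Fix an $F$-basis of $V$ and set $A := \rho(g) \in \GL_d(R)$ and $N := \Lie(\rho)(X) \in \Mat_{d,d}(R)$. From the definition of $\nabla_X$, the matrix of $\nabla_X$ in this basis is exactly $N$, and the matrix of $g\phi$ is $A$. Four compatibilities are in play: functoriality of $\Ad$ yields $\Lie(\rho)(\Ad(g)(Y)) = A \cdot \Lie(\rho)(Y) \cdot A^{-1}$ for all $Y \in \fg_R$; $\phi$ acts coefficient-wise, giving $\Lie(\rho)(\phi(X)) = \phi(N)$; scalars commute with $\Lie(\rho)$; and, crucially, $\Lie(\rho)(\dlog(g)) = \partial(A) A^{-1}$. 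The last identity is the only non-formal one and follows by unwinding Construction~\ref{C:dlog}: applying $\rho$ to $\hat\partial(g) g^{-1} \in G(R[\varepsilon])$ produces $\hat\partial(A) A^{-1} = I_d + \partial(A) A^{-1} \varepsilon$, which corresponds to $\partial(A) A^{-1}$ under the isomorphism $\Lie(\GL_V)(R) \cong \End_R(V_R)$ of Remark~\ref{R:Lie-endo}. Applying $\Lie(\rho)$ to the displayed equation then yields $N + \partial(A) A^{-1} = \muu \cdot A \phi(N) A^{-1}$, which is precisely the matricial gauge compatibility condition spelled out in the introduction. Hence $(V_R, g\phi, \nabla_X)$ is a $(\phi,\nabla)$-module over $R$.

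The main obstacle is bookkeeping: lining up the abstract objects $\dlog(g)$, $\Ad(g)$, and $\phi(X)$ with their concrete matrix counterparts under an arbitrary representation, since these are defined via the Lie functor and dual numbers rather than directly on matrices. Once the dictionary between $\fg_R$ and $\End_R(V_R)$ is in place, the equivalence of the intrinsic gauge condition $X = \Gamma_g(\muu\phi(X))$ and the matricial gauge compatibility is purely formal, and it holds simultaneously for every $V \in \Rep_F(G)$, which is what is needed to conclude.
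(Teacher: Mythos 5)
Your proof is correct and follows essentially the same route as the paper: reduce everything to a matricial identity in a basis for $V$ and check that the gauge compatibility condition $N = \muu A\phi(N)A^{-1} - \partial(A)A^{-1}$ holds. The emphases are complementary: the paper spells out the coordinate computation showing that $\muu\cdot g\phi\circ\Theta_X = \Theta_X\circ g\phi$ is equivalent to that matricial identity, and then simply asserts that the identity follows from $X = \Gamma_g(\muu\phi(X))$, whereas you spell out the $\Lie(\rho)$ unwinding (in particular the identity $\Lie(\rho)(\dlog(g)) = \partial(A)A^{-1}$) that the paper leaves implicit, while taking the equivalence between the commutation relation and the matricial condition as given via the introduction; including a one-line verification of that equivalence (or a pointer to \cite[\S 2.5]{ked-ann}) would make the argument self-contained, but this is a matter of presentation rather than a gap.
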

\begin{proof}
Choose a basis $e_1,\cdots,e_d$ for $V$ over $F$ where $d=\dim_F V$. Let $A=(a_{ij})_{i,j} \in \GL_d(R)$ (resp.\ $N=(n_{ij})_{i,j} \in \Mat_{n,n}(R)$) be the representing matrix of $\rho(g)$ (resp.\ $X$). For any $\bv=\sum\limits_{i=1}^d e_i\otimes f_i \in V_\cR$, we compute
\begin{align*}
g\phi(\Theta_X(\bv)) &= g\phi \Big(\ssum\limits_{i=1}^d e_i \otimes \partial(f_i) + \ssum\limits_{j=1}^d e_j \otimes \ssum\limits_{i=1}^d  n_{ji}f_i  \Big)\\
&=\ssum \limits_{j=1}^d e_j \otimes \ssum \limits_{i=1}^d a_{ji}\phi(\partial(f_i)) + \ssum\limits_{k=1}^d e_k \otimes \ssum \limits_{i=1}^d \ssum \limits_{j=1}^d a_{kj}\phi(n_{ji} f_i),
\end{align*}
and
\begin{align*}
\Theta_X (g\phi(\bv)) &=\Theta_X \Big( \ssum\limits_{j=1}^d e_j \otimes \ssum\limits_{i=1}^d a_{ji} \phi(f_i) \Big) \\
&=\ssum\limits_{j=1}^d e_j \otimes \ssum\limits_{i=1}^d \partial(a_{ji}) \phi(f_i) +\ssum\limits_{j=1}^d e_j \otimes \ssum\limits_{i=1}^d a_{ji}\partial( \phi(f_i)) + \ssum\limits_{k=1}^d e_k  \otimes \ssum \limits_{i=1}^d \ssum \limits_{j=1}^d n_{kj} a_{ji}  \phi(f_i).
\end{align*}
Since $\muu\cdot \sum\limits_{j=1}^d e_j \otimes \sum \limits_{i=1}^d a_{ji}\phi(\partial(f_i))= \sum\limits_{j=1}^d e_j \otimes \sum\limits_{i=1}^d a_{ji}\partial( \phi(f_i))$, we have that $\muu\cdot g\phi \circ \Theta_X= \Theta_X \circ g\phi$ if and only if $\mu A \phi(N)=\partial(A)+NA$, i.e., $N=\muu A\phi(N)A^{-1}-\partial(A)A^{-1}$. The last equality holds because of the assumption $X=\Gamma_g\big(\muu\phi(X)\big)$, which completes the proof.

\end{proof}

As a consequence, we may define a functor
\begin{equation}\label{E:B}
\bB^{\phi,\nabla}(G,R) \longto \GphinabR, \;\;\;\;\; (g,X)\longmapsto  \I(g,X),
\end{equation}
where $\I(g,X)(V)\coloneqq (V_R,g\phi,\nabla_X)$. We next show that this functor is an isomorphism. To do this, we need the following elementary descent result.

\begin{lem}\label{L:faithful}
Fix a field $k$, and let $A$ and $B$ be finitely generated $k$-algebras. Let $\rho\colon X\to Y$ be a closed embedding of affine algebraic $k$-schemes for $X=\Spec A$ and $Y=\Spec B$. Let $\iota\colon S\hookrightarrow \tilde S$ be an embedding in $\Alg_k$. Suppose that we are given an element $\tilde z\in X(\tilde S)$ such that $\rho(\tilde z)\in Y(\iota)$, then there exists a unique element $z\in X(S)$ such that $\tilde z=X(\iota)(z)$. 
\end{lem}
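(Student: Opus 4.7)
The plan is to dualise and reduce to a routine diagram chase in commutative algebra. Writing things on the level of functions, the closed embedding $\rho\colon X\to Y$ corresponds to a surjection $\rho^{\ast}\colon B\twoheadrightarrow A$ of $k$-algebras. An $\tilde S$-point $\tilde z\in X(\tilde S)$ is the same datum as a $k$-algebra homomorphism $\tilde z^{\ast}\colon A\to \tilde S$, and $\rho(\tilde z)\in Y(\tilde S)$ corresponds to the composition $\tilde z^{\ast}\circ \rho^{\ast}\colon B\to \tilde S$. The hypothesis $\rho(\tilde z)\in Y(\iota)$ then says that this composition factors through $\iota\colon S\hookrightarrow \tilde S$, so there exists a $k$-algebra map $w\colon B\to S$ with
\[
\iota\circ w \;=\; \tilde z^{\ast}\circ \rho^{\ast}.
\]

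First, I would show that $w$ descends through $\rho^{\ast}$, i.e.\ that $\ker(\rho^{\ast})\subseteq \ker(w)$. Indeed, if $b\in \ker(\rho^{\ast})$, then $\iota(w(b))=\tilde z^{\ast}(\rho^{\ast}(b))=0$, and since $\iota$ is injective we get $w(b)=0$. Combined with the surjectivity of $\rho^{\ast}$, this gives a unique $k$-algebra map $z^{\ast}\colon A\to S$ such that $w=z^{\ast}\circ\rho^{\ast}$, i.e.\ a unique element $z\in X(S)$.

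Next, I would verify that $X(\iota)(z)=\tilde z$, i.e.\ $\iota\circ z^{\ast}=\tilde z^{\ast}$. For any $a\in A$, surjectivity of $\rho^{\ast}$ lets us write $a=\rho^{\ast}(b)$ for some $b\in B$; then
\[
\iota\bigl(z^{\ast}(a)\bigr)=\iota\bigl(z^{\ast}(\rho^{\ast}(b))\bigr)=\iota(w(b))=\tilde z^{\ast}(\rho^{\ast}(b))=\tilde z^{\ast}(a),
\]
as required. Uniqueness of $z$ is immediate: any two candidates $z,z'\in X(S)$ satisfying $X(\iota)(z)=X(\iota)(z')=\tilde z$ give $\iota\circ z^{\ast}=\iota\circ (z')^{\ast}$, and the injectivity of $\iota$ forces $z^{\ast}=(z')^{\ast}$.

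There is no serious obstacle here: the proof uses only the surjectivity of $\rho^{\ast}$ (from $\rho$ being a closed embedding) and the injectivity of $\iota$. The finite generation of $A$ and $B$ plays no role in the argument and can be dropped, though it is harmless in the applications.
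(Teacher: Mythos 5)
Your proof is correct and takes essentially the same approach as the paper's: dualize to $k$-algebra maps, use surjectivity of $\rho^*$ (closed embedding) to descend the map $w\colon B\to S$ coming from the factorization hypothesis, and use injectivity of $\iota$ to check that $\ker(\rho^*)\subseteq\ker(w)$. The only cosmetic difference is that you invoke the universal property of the quotient $A = B/\ker(\rho^*)$, while the paper defines $\alpha(a) := \beta(b)$ for a chosen preimage $b$ of $a$ and verifies well-definedness by hand (and then separately argues $\alpha$ is a $k$-algebra morphism using injectivity of $\iota$); your route is marginally cleaner since algebraicity is automatic. Your observation that the finite-generation hypothesis is not used in the proof is correct.
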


\begin{proof}
W have a diagram
\[\begin{tikzcd}
 &&  A\ar[d, densely dotted,"\exists\alpha"] \ar{rrd}{\tilde z} \\
B\ar{urr}{\rho^*} \ar{rr}{\beta}  &&  S  \ar{rr}{\iota}  &&\tilde S
\end{tikzcd}\]
with the outer triangle commutative in which $\rho^*$ is surjective. We prove the lemma by constructing a unique $k$-algebra morphism $\alpha\colon A\to S$ such that $\tilde z=\iota\circ\alpha$, as follows. For any $a\in A$, the surjectivity of $\rho^*$ gives us some $b\in B$ such that $\rho^*(b)=a$. We first claim that $\alpha(a)\coloneqq \beta(b)$ is well-defined. Suppose that $\rho^*(b_1)=\rho^*(b_2)$ for $b_1,b_2\in B$, then $(\tilde z\circ \rho^*)(b_1)=(\tilde z\circ \rho^*)(b_2)$, which implies that 
\[(\iota\circ\beta)(b_1)=(\iota\circ\beta)(b_2).\]
Since $\iota$ is injective, we have that $\beta(b_1)=\beta(b_2)$, as claimed. We then have a map $\alpha$ satisfying $\tilde z\circ \rho^*=\iota\circ\alpha\circ\rho^*$, yielding that $\tilde z=\iota\circ\alpha$ since $\rho^*$ is surjective. In particular, $\alpha$ is a $k$-algebra morphism since $\iota$ is injective and both $\iota$ and $\tilde z=\iota\circ\alpha$ are $k$-algebra morphisms. Finally, we see that $\alpha$ is unique, again because $\iota$ is injective.
\end{proof}

We remark that $\phinabR$ is a rigid tensor category, with tensor products and duals defined in the usual way.

\begin{propo}\label{P:B}
The functor $\bB^{\phi,\nabla}(G,R) \to  \GphinabR$~\eqref{E:B} is an isomorphism of categories.
\end{propo}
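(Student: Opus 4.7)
The plan is to produce an explicit inverse to $(g,X)\mapsto \I(g,X)$ by extracting the pair $(g,X)$ from an abstract $G$-$(\phi,\nabla)$-module via two applications of the tannakian machinery: Theorem~\ref{T:recover} for the Frobenius, and Corollary~\ref{C:Lie-tannakian} for the connection. Since $\forg\circ \I=\omega^G\otimes R$ is imposed strictly, we have $\I(V)=(V_R,\Phi_V,\nabla_V)$ on the nose, so the correspondence will be a literal bijection on objects rather than merely an equivalence of categories.

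For the Frobenius datum, I first linearize the $\phi$-semilinear $\Phi_V$ via the canonical isomorphism $V_R\otimes_{R,\phi}R\cong V_R$ (available because $V$ lives over $F$) to obtain $\tilde\Phi_V\in \Aut_R(V_R)$. Functoriality, tensor-compatibility and unitality of $\I$ translate directly into conditions (i)--(iii) of Theorem~\ref{T:recover}, yielding a unique $g\in G(R)$ with $\tilde\Phi_V=\rho_V(g)$, equivalently $\Phi_V=g\phi$, for every $(V,\rho_V)\in\Rep_F(G)$.

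For the connection datum, let $\partial_V\colon V_R\to V_R$ denote the canonical ``constant-coefficient'' derivation $v\otimes f\mapsto v\otimes \partial(f)$, well-defined because $V$ carries an $F$-structure and $\partial$ vanishes on $K$. Both $\Theta_V\coloneqq \iota\circ \nabla_V$ and $\partial_V$ satisfy the Leibniz rule over $R$, so $\theta_V\coloneqq \Theta_V-\partial_V$ is $R$-linear. The family $\{\theta_V\}$ satisfies the hypotheses of Corollary~\ref{C:Lie-tannakian}: condition (iii), $\alpha_R\circ \theta_V=\theta_W\circ \alpha_R$, follows from functoriality of $\I$ for $\Theta$ and from $F$-linearity of $\alpha$ for $\partial$; condition (i) follows from Lemma~\ref{L:tensor-Theta} together with the analogous identity $\partial_{V\otimes W}=\partial_V\otimes \Id_{W_R}+\Id_{V_R}\otimes \partial_W$; condition (ii) is immediate. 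Thus one obtains a unique $X\in \fg_R$ with $\theta_V=\Lie(\rho_V)(X)$, i.e.\ $\nabla_V=\nabla_X$ for all $V$. The gauge compatibility $X=\Gamma_g(\muu\phi(X))$ is then exactly the Frobenius-connection compatibility built into $\phinabR$, applied to any faithful representation and the reverse of the matricial calculation in Lemma~\ref{L:phi-nabla}.

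For morphisms, any $\eta\colon \I(g,X)\to \I(g',X')$ is, after forgetting the Frobenius and connection structure, a natural endomorphism of $\omega^G\otimes R$, hence by Corollary~\ref{C:recover} corresponds to a unique $x\in G(R)$ with $\eta_V=\rho_V(x)$. Compatibility with the Frobenii unravels, using $\rho_V(\phi(x))=\phi(\rho_V(x))$ from Remark~\ref{R:monoid}, to $g'=xg\phi(x^{-1})$; compatibility with the differential operators unravels, after identifying $\Lie(\rho_V)(\dlog(x))$ with $\partial(\rho_V(x))\rho_V(x)^{-1}$ (as in Construction~\ref{C:dlog}) and comparing commutators of $u=\rho_V(x)$ with $\partial_V$, to $X'=\Gamma_x(X)$. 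These are precisely the defining conditions of a morphism in $\bB^{\phi,\nabla}(G,R)$, so the functor is bijective on hom-sets. I expect the main bookkeeping obstacle to be the careful handling of the $\phi$-semilinearity of $\Phi_V$ and of the non-$R$-linear piece $\partial_V$ of $\Theta_V$, both of which must be peeled off before the $R$-linear tannakian statements can be invoked.
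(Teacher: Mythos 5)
Your proof is correct and shares the paper's tannakian scaffolding — extracting $X$ from the family $\theta_V=\Theta_V-\Id_V\otimes\partial$ via Corollary~\ref{C:Lie-tannakian}, extracting $g$ from a linearized Frobenius via Theorem~\ref{T:recover}, deducing the gauge-compatibility from Lemma~\ref{L:phi-nabla} applied to a faithful representation, and recovering a morphism's element $x\in G(R)$ from Corollary~\ref{C:recover} — but you take a genuinely different, and cleaner, route to $g$. You linearize $\Phi_V$ intrinsically over $R$, using the canonical identification $V_R\otimes_{R,\phi}R\cong V_R$ available because $V$ carries an $F$-structure and $\phi$ fixes $F$; the resulting $\tilde\Phi_V$ is $R$-linear and surjective (since the image of $\Phi_V$ spans $V_R$), hence an automorphism because a surjective endomorphism of a finite free module over a commutative ring is bijective, so Theorem~\ref{T:recover} applies directly over $R$. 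The paper's proof instead base-changes along $\psi\colon\cR\hookrightarrow\tR$ to make $\phi$ invertible, forms $\lambda_V=\tilde\Phi_V\circ(\Id_V\otimes\phi^{-1})$ over $\tR$, applies Theorem~\ref{T:recover} there to obtain $\tilde g\in G(\tR)$, and then descends $\tilde g$ to $G(R)$ using a choice of faithful representation together with the elementary descent Lemma~\ref{L:faithful}, with a separate (trivial) treatment for $R\in\{\tE^\dagger,\tR\}$. Your route avoids the extended Robba ring, the descent lemma, and the case split entirely, and is the more standard way to linearize a $\phi$-module; the paper's detour buys nothing extra. The remaining parts of your argument — the connection, the compatibility check, and the hom-set bijection — match the paper's proof essentially verbatim (for the last of these, note that the paper explicitly invokes rigidity, \cite[Prop.~1.13]{tc}, to see that the underlying natural transformation of $\omega^G\otimes R$ is an automorphism before applying Corollary~\ref{C:recover}; you should say ``automorphism'' rather than ``endomorphism'' there, for the same reason).
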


\begin{proof}
The proof is similar to that of~\cite[Lemma 9.1.4]{dat}. We first show that the functor is fully faithful. Let $(g,X), (g',X')\in \bB^{\phi,\nabla}(G,R)$, then any  morphism $\eta \colon \I(g,x) \to \I(g',X')$ is an isomorphism, since any morphism of tensor functors between rigid tensor categories is an isomorphism by~\cite[Proposition 1.13]{tc}. By composing $\eta$ with the forgetful functor, we then have an automorphism of the fiber functor $\omega^G\otimes R$. By Corollary~\ref{C:recover}, this automorphism is given by a unique element $x\in G(R)$, which then gives an isomorphism between $(g,X)$ and $(g',X')$, as desired.

It remains to show that, for any $\I\in \GphinabR$ there exists a unique $(g,X)\in \bB^{\phi,\nabla}(G,R)$ such that $\I=\I(g,X)$. For any $(V,\rho)\in \Rep_F(G)$, we write $\I(V,\rho_V)=(V_R,\Phi_V,\nabla_V)$ for a $\phi$-linear map $\Phi_V$ and a connection $\nabla_V$ on $V_R$. The proof consists of two steps.

\textit{Step 1}: There exists a unique $X\in \fg_R$ such that $\nabla_V=\nabla_X$.
We write $\Theta_V$ for be the composition of
\[\begin{tikzcd}
V_R  \ar[r,"\nabla_V"] &V_R \bigotimes \Omega^1_R \ar[r,"\iota"] & V_R,
\end{tikzcd}\]
where $\iota$ is induced by $fdt\mapsto f$, and put $\theta_V\coloneqq \Theta_V-\Id_V\otimes \partial$. It's clear that $\theta_{\id}=0$ where $\id$ denotes the trivial representation. Lemma~\ref{L:tensor-Theta} then implies that the family
\begin{align*}
\big\{\theta_V\colon V_R \to V_R \mid (V,\rho_V)\in \Rep_F(G) \big\}
\end{align*}
of $R$-linear endomorphisms satisfies conditions (i,ii,iii) in Corollary~\ref{C:Lie-tannakian}. We thus find a unique $X\in \fg_R$ such that $\theta_V=\Lie(\rho_V)(X)$ for all $(V,\rho_V)\in \Rep_F(G)$, which implies that $\nabla_V=\nabla_X$.

\textit{Step 2}: There exists a unique $g\in G(R)$ such that $\Phi_V=g\phi$. We first assume $R\in \{\cE^\dagger,\cR\}$. We put $\tilde\Phi_V \coloneqq \Phi_V\otimes \phi$ where $\phi$ is the Frobenius lift on $\tR$ (in particular, $\tR$ is viewed as an $\cR$-module via the $\phi$-equivariant embedding $\psi$ described in \S~\ref{S:slope}). The family 
\[\big\{\lambda_V\coloneqq \tilde\Phi_V \circ (\Id_V\otimes \phi^{-1})\colon V_{\tR}\to V_{\tR} \mid V\in \Rep_F(G) \big \}\]
of $\tR$-linear endomorphisms satisfies conditions (i,ii,iii) in Theorem~\ref{T:recover}, which provides a unique element $\tilde g\in G(\tR)$ such that $\lambda_V=\rho_V(\tilde g)$ for all $(V,\rho_V) \in \Rep_F(G)$. We next reduce $\tilde g$ to an element in $G(\cR)$. We compute
\[\tilde\Phi_V\circ (\Id_V\otimes \phi^{-1})(v\otimes f) =\tilde\Phi_V(v\otimes \phi^{-1}(f))=\rho_V(\tilde g)(v\otimes f),\]
which implies that $\tilde\Phi_V(v\otimes f)=\rho_V(\tilde g)(v\otimes \phi(f))$, and hence, $\tilde\Phi_V=\tilde g \phi$. We now fix a $d$-dimensional faithful representation $(V,\rho_V)$, and an $F$-basis $e_1,\cdots,e_d$ for $V$. Suppose that $\Phi_V(e_i)=\sum\limits_{j=1}^d a_{ji}e_j$, where $a_{ij}\in R$ for all $1\leq i,j \leq d$. Put $A=(a_{ij})_{i,j} \in \GL_d(R)$. Then $\psi(A)=(\psi(a_{ij}))_{i,j}\in \GL_d(\tR)$ describes the $\phi$-linear action of $\tilde\Phi_V$ as well as the $\tR$-linear action $\rho(\tilde g)$ in the basis $e_1\otimes 1,\cdots, e_d \otimes 1$. By replacing $X$ with $G$, $Y$ with $\GL_d$, $S$ with $R$, $\tilde S$ with $\tR$, and $\iota$ with $\psi$ in Lemma~\ref{L:faithful}, we find a unique element $g\in G(R)$ such that $\psi(g)=\tilde g$. It follows that $\Phi_V=g\phi$, as desired. When $R\in \{\tE^\dagger,\tR\}$, we apply the above argument to $\tilde\Phi_V \coloneqq \Phi_V$ dispensing with the reduction method, and we are done.

\end{proof}

\subsection{The pushforward functor}
Let $R\in \{\cE^\dagger,\cR,\tE^\dagger,\tR\}$. For any $g\in G(R)$ and $n\in \NN$, we define
\[[n]_*(g)\coloneqq g\phi(g)\cdots\phi^{n-1}(g)\in G(R),\]
the \emph{$n$-pushforward} of $g$. Notice that $[n]_*(g)\phi^n=(g\phi)^n\in G(R)\rtimes \la\phi\ra$ for all $n\in \NN$. 

We define the \emph{$n$-pushforward functor} by
\[[n]_*\colon \bB^{\phi,\nabla}(G,R) \longto \mathbf B^{\phi^n,\nabla}(G,R), \;\;\;\;\; (g,X) \longmapsto \big([n]_*(g), X \big),\]
and $[n]_*(x)=x$ for all morphisms $x\in \bB^{\phi,\nabla}(G,R)$. The following lemma shows that this functor is well-defined (in particular, faithful).

\begin{lem}\label{L:push}
Let $(g,X)\in \bB^{\phi,\nabla}(G,R)$. We then have $\big([n]_*(g), X \big)\in \mathbf B^{\phi^n,\nabla}(G,R)$ for all $n\in \NN$.

\end{lem}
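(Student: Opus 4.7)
My plan is to prove the lemma by induction on $n$, with the base case $n=1$ being the hypothesis $(g,X)\in\bB^{\phi,\nabla}(G,R)$. For the inductive step, the decomposition $[n+1]_*(g)=[n]_*(g)\cdot\phi^n(g)$ is the starting point, and I would reduce the compatibility relation for $[n+1]_*(g)$ to the one for $[n]_*(g)$ combined with the base relation for $g$.

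The two group-theoretic inputs I would use are the cocycle identity
\[\dlog(hh')=\dlog(h)+\Ad(h)(\dlog(h')),\qquad \Gamma_{hh'}=\Gamma_h\circ\Gamma_{h'},\]
for $h,h'\in G(R)$, and the Frobenius-intertwining identity
\[\dlog(\phi^n(h))=\mu^{(n)}\phi^n(\dlog(h)),\qquad \text{where}\quad \mu^{(n)}\coloneqq\partial(\phi^n(t)).\]
The cocycle identity is a standard property of the logarithmic derivative; it can be verified on $\GL_d$ (by the Leibniz rule applied to matrix products) and transported to $G$ by picking a faithful representation $\rho\colon G\hookrightarrow\GL_V$ and appealing to Lemma~\ref{L:faithful}. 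The Frobenius-intertwining identity in turn reduces to the Leibniz-type relation $\partial\circ\phi=\mu\cdot(\phi\circ\partial)$ on $R$, which upon iteration yields both the formula $\mu^{(n)}=\mu\cdot\phi(\mu)\cdots\phi^{n-1}(\mu)$ (by evaluation on $t$) and, together with the cocycle identity, the displayed $\phi^n$-intertwining identity.

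Granting these, the inductive step is a direct computation: using the cocycle identity,
\[\Gamma_{[n+1]_*(g)}\bigl(\mu^{(n+1)}\phi^{n+1}(X)\bigr)=\Gamma_{[n]_*(g)}\Bigl(\Gamma_{\phi^n(g)}\bigl(\mu^{(n)}\phi^n(\mu\phi(X))\bigr)\Bigr);\]
applying $\mu^{(n)}\phi^n$ to the base identity $X=\Gamma_g(\mu\phi(X))$ and using the Frobenius-intertwining identity produces $\Gamma_{\phi^n(g)}(\mu^{(n)}\phi^n(\mu\phi(X)))=\mu^{(n)}\phi^n(X)$, after which the inductive hypothesis gives the equality $\Gamma_{[n]_*(g)}(\mu^{(n)}\phi^n(X))=X$, closing the induction. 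The main obstacle I anticipate is establishing the intertwining identity $\mu^{(n)}\phi^n\circ\Gamma_g=\Gamma_{\phi^n(g)}\circ(\mu^{(n)}\phi^n)$ cleanly on $\fg_R$: one must correctly track how the scalar $\mu$ passes through the $\phi$-linear operators and interacts with both $\Ad$ and $\dlog$, and this is the only point where Hypothesis~\ref{H:FK} and the precise definition of $\phi$ on $\cR$ really enter. An alternative, essentially equivalent route is to invoke Proposition~\ref{P:B} to identify the problem with showing that the $n$-pushforward of a $(\phi,\nabla)$-module is a $(\phi^n,\nabla)$-module, which follows from iterating the operator identity $\mu(g\phi)\circ\Theta_X=\Theta_X\circ(g\phi)$ on a faithful representation; this trades the intrinsic Lie-algebra computation for one in matrices at the cost of invoking Lemma~\ref{L:faithful} once more.
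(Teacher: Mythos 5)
Your proposal is correct and takes essentially the same route as the paper: both argue by induction, decompose $[n]_*(g)$ as $[n-1]_*(g)\cdot\phi^{n-1}(g)$ (your indexing shifts by one), and rely on the same two ingredients, namely the cocycle identity for $\dlog$ (equivalently, $\Gamma_{hh'}=\Gamma_h\circ\Gamma_{h'}$) and the Frobenius-intertwining identity $\dlog(\phi^{n}(g))=\muu(\phi^{n},t)\,\phi^{n}(\dlog(g))$, which the paper derives from $\partial\circ\phi^{n}=\muu(\phi^{n},t)\,\phi^{n}\circ\partial$ exactly as you indicate. The only cosmetic difference is that you phrase the induction step compactly through $\Gamma$-composition, whereas the paper expands the same computation in terms of $\Ad$ and $\dlog$ directly.
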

\begin{proof}
We show by induction on $n$ that
\[X+\dlog\big([n]_*(g)\big)=\muu(\phi^n,t)\Ad\big([n]_*(g)\big)\big(\phi^{n}(X)\big).\]
There is nothing to show when $n=1$. We now assume by the induction hypothesis that
\[X+\dlog\big([n-1]_*(g)\big)=\muu(\phi^{n-1},t)\Ad\big([n-1]_*(g)\big)\big( \phi^{n-1}(X)\big),\]
We notice that $\muu(\phi^{n-1},t)=\muu\phi(\muu)\cdots\phi^{n-2}(\muu)$, and hence 
\[\partial(\phi^{n-1}(f))=\muu\phi(\muu)\cdots\phi^{n-2}(\muu)\phi^{n-1}(\partial(f))=\muu(\phi^{n-1},t)\phi^{n-1}(\partial(f)),~~~~~\forall f\in R,\]
which implies that 
\[\dlog(\phi^{n-1}(g))=\muu(\phi^{n-1},t) \phi^{n-1}(\dlog(g)).\]
On the other hand, since $X+\dlog(g)=\muu \Ad(g)(\phi(X))$, we have
\[\phi^{n-1}(X)+\phi^{n-1}(\dlog(g))= \phi^{n-1}(\muu)\Ad\big(\phi^{n-1}(g)\big)\big(\phi^n(X)\big).\]
We now compute
\begin{align*}
 X+\dlog\big([n]_*(g)\big)
&=X+\dlog\big([n-1]_*(g)\big)+\Ad\big([n-1]_*(g)\big)\big(\dlog(\phi^{n-1}(g))\big)\\
&=\muu(\phi^{n-1},t)\Ad\big([n-1]_*(g)\big) \big(\phi^{n-1}(X)\big)\\
&\;\;\;\; +\muu(\phi^{n-1},t)\Ad\big([n-1]_*(g)\big) \big(\phi^{n-1}(\dlog(g)) \big)\\
&=\muu(\phi^{n-1},t) \Ad\big([n-1]_*(g)\big) \big(\phi^{n-1}(X)+\phi^{n-1}(\dlog(g)) \big)\\
&=  \muu(\phi^{n-1},t)\Ad\big([n-1]_*(g)\big) \big(\phi^{n-1}(\muu)\Ad\big(\phi^{n-1}(g)\big)\big(\phi^n(X)\big)\big)\\
&=\muu(\phi^n,t) \Ad\big([n]_*(g)\big)\big(\phi^{n}(X)\big),
\end{align*}
which proves the lemma.

\end{proof}

In connection with the pushforward functor on $\phi$-modules as recalled in \S~\ref{S:slope}, we state the following lemma resulting from~\cite[Lemma 1.6.3 and Remark 1.7.2]{ked-relative}, which will not be explicitly used in the sequel.

\begin{lem}\label{L:push-slope}
Let $g\in G(R)$. Then $(V_R,g\phi)$ is pure of slope $\mu$ if and only if $(V_R,[n]_*(g)\phi^n)$ is pure of slope $n\mu$ for all $n\in \NN$. Moreover, if $(V_R,g\phi)$ has jumps $\mu_1,\cdots,\mu_l$, then $(V_R,[n]_*(g)\phi^n)$ has jumps $n\mu_1,\cdots,n\mu_l$.
\end{lem}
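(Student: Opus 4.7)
The plan is to reduce the statement to the corresponding known statement about the pushforward functor $[n]_*\colon \phimodR \to \mathbf{Mod}^{\phi^n}_R$ on ordinary $\phi$-modules recalled in \S~\ref{S:slope}. The key observation is the identity $[n]_*(g)\phi^n=(g\phi)^n$ in $G(R)\rtimes \la\phi\ra$ already noted in the text. Applying $\rho_V$, this means that as an endomorphism of $V_R$, the $\phi^n$-linear map $[n]_*(g)\phi^n$ equals the $n$-fold composition $(g\phi)^n=\Phi^n$, where $\Phi=g\phi$ denotes the action on $(V_R,g\phi)$. Consequently, we have an identification of $\phi^n$-modules
\[
\big(V_R,\,[n]_*(g)\phi^n\big)=(V_R,\Phi^n)=[n]_*(V_R,g\phi).
\]

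With this identification in hand, both assertions are direct consequences of Kedlaya's pushforward results in~\cite[Lemma 1.6.3, Remark 1.7.2]{ked-relative}. First, the cited lemma asserts that for any $\phi$-module $(M,\Phi)$ over $R$ and any positive integer $n$, the module $(M,\Phi)$ is pure of slope $\mu$ if and only if $[n]_*(M,\Phi)=(M,\Phi^n)$ is pure of slope $n\mu$. Applying this with $(M,\Phi)=(V_R,g\phi)$ yields the first assertion of the lemma.

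For the "moreover" statement, I would invoke the fact (contained in the same references) that the pushforward functor on $\phi$-modules is compatible with the slope filtration: if $0=M_0\subseteq M_1\subseteq\cdots\subseteq M_l=M$ is the slope filtration of $M=(V_R,g\phi)$ with jumps $\mu_1<\cdots<\mu_l$, then $0=M_0\subseteq M_1\subseteq \cdots \subseteq M_l=M$ is also the slope filtration of $[n]_*M$, but now the successive quotients $M_i/M_{i-1}$ are pure of slope $n\mu_i$ by the first part. Since $\mu_1<\cdots<\mu_l$ implies $n\mu_1<\cdots<n\mu_l$, this is indeed the slope filtration of $[n]_*M$, and its jumps are precisely $n\mu_1,\ldots,n\mu_l$.

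There is essentially no obstacle here: once the identification $(V_R,[n]_*(g)\phi^n)=[n]_*(V_R,g\phi)$ is made, the statement is a verbatim translation of Kedlaya's result from the matricial/linear setting to the $G$-equivariant setting. The only point requiring a line of justification is the identity $[n]_*(g)\phi^n=(g\phi)^n$, which follows from iterated application of the multiplication rule $(h\phi^m)\circ(g\phi^k)=(h\phi^m(g))\phi^{m+k}$ recorded in Remark~\ref{R:monoid}, exactly as in the analogous computation for iterated Frobenius matrices.
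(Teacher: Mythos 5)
Your proposal is correct and follows the same route as the paper. The paper itself offers no explicit proof — it simply states that the lemma "results from \cite[Lemma 1.6.3 and Remark 1.7.2]{ked-relative}" — and your proposal supplies exactly the missing intermediary: the identification $(V_R,[n]_*(g)\phi^n)=[n]_*(V_R,g\phi)$ via the semidirect-product identity $[n]_*(g)\phi^n=(g\phi)^n$ from Remark~\ref{R:monoid}, after which Kedlaya's results on the pushforward of ordinary $\phi$-modules apply directly.
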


\subsection{$G$-$(\phi,\nabla)$-modules attached to splittings}
Let $g\in G(\cR)$. We fix a splitting $\xi_g$ of $\HN_g$ by Theorem~\ref{T:Q-splitting}.
\begin{cons}\label{C:Phi'}
Let $(V_\cR,g\phi,\nabla_X)$ be a $(\phi,\nabla)$-module over $\cR$ with the slope filtration
\[0\subseteq V_\cR^{\mu_1} \subseteq \cdots \subseteq V_\cR^{\mu_l}=V_\cR,\]
with jumps $\mu_1<\cdots<\mu_l$. Then $\xi_g(V)$ is the decomposition
\[ V_\cR=\soplus\limits_{i=1}^l V_{\cR,\mu_i}\]
of $\cR$-modules such that $\bigoplus\limits_{i=1}^j V_{\cR,\mu_i} =V_\cR^{\mu_j}$ for $j=1,\cdots,l$. 

\begin{itemize}
\item[(i)] For any $1\leq j\leq l$ and $\bv\in V_{\cR,\mu_j}$, we have $\Phi_g(\bv)\in V_\cR^{\mu_j}$, whence a unique expression $\Phi_g(\bv)=\bsum{i=1}^j \bv_i$ with $\bv_i\in V_{\cR,\mu_i}$. We thus have a $\phi$-linear map
\[\Phi_{g,\mu_j} \colon V_{\cR,\mu_j} \longto V_{\cR,\mu_j},~~~~ \bv \longmapsto \bv_j.\]
We then define $\Phi_g'\coloneqq \bigoplus\limits_{i=1}^l \Phi_{g,{\mu_i}}$. We define
\[\I'(g)(V)\coloneqq (V_\cR,\Phi_g').\]
For a morphism $\alpha\colon V\to W$ of finite-dimensional $G$-modules, we define $\I'(g)(\alpha)\coloneqq \alpha_\cR$.
\item[(ii)] Similarly, for any $1\leq j\leq l$ and $\bv\in V_{\cR,\mu_j}$, we have $\Theta_X(\bv)\in V_\cR^{\mu_j}$, whence a unique expression $\Theta_X(\bv)=\sum_{i=1}^j \bv_i$ with $\bv_i\in V_{\cR,\mu_i}$. We thus have a $K$-linear differential operator
\[\Theta_{X,\mu_j} \colon V_{\cR,\mu_j} \longto V_{\cR,\mu_j},~~~~ \bv \longmapsto \bv_j.\]
We then define $\Theta_X'\coloneqq \bigoplus\limits_{i=1}^l \Theta_{X,{\mu_i}}$.
\end{itemize}
\end{cons}

Notice that $\big( V_{\cR,\mu_1},\Phi_{g,{\mu_1}}\big)=\big( V_\cR^{\mu_1},\Phi_g|_{V_\cR^{\mu_1}} \big)$, and $\big( V_{\cR,\mu_i},\Phi_{g,{\mu_i}}\big)$ is isomorphic to $V_\cR^{\mu_i}/V_\cR^{\mu_{i-1}}$ as $\phi$-modules for $2\leq i\leq l$. Similarly, we have $\big( V_{\cR,\mu_1},\Theta_{X,{\mu_1}} \big)=\big( V_\cR^{\mu_1},\Theta_X|_{V_\cR^{\mu_1}} \big)$, and $\big( V_{\cR,\mu_i},\Theta_{X,\mu_i}\big)$ is isomorphic to $V_\cR^{\mu_i}/V_\cR^{\mu_{i-1}}$ as a differential module for $2\leq i\leq l$.

The remainder of this subsection is devoted to the consequences of Construction~\ref{C:Phi'} (i). We will continue to discuss (ii) in \S~\ref{S:G-phi-nabla}; we will show, in particular, that both constructions assemble to give a $G$-$(\phi,\nabla)$-module over $\cR$. 

\begin{lem}\label{L:I'}
$\I'(g) \colon \Rep_F(G) \to \phi\text{-}\Mod_\cR$ is a $G$-isocrystal.
\end{lem}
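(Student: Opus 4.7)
The plan is to verify directly the four axioms in Definition~\ref{D:GIsoc}, exploiting the fact that the chosen splitting $\xi_g\colon \Rep_F(G)\to \QQ\text{-}\Grad_\cR$ is itself an exact faithful $F$-linear tensor functor that is compatible with the forgetful functor (which we get from Theorem~\ref{T:Q-splitting} and Proposition~\ref{P:filtered-fiber}--\ref{P:exact}). So most of the categorical structure comes for free from $\xi_g$; the only non-formal content is that the Frobenius pieces $\Phi_{g,\mu_i}$ fit together into a tensor-natural $\phi$-linear structure.

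\textbf{Step 1: each $\I'(g)(V)$ is a $\phi$-module.} Since $V_\cR^{\mu_i}$ is a sub-$\phi$-module of $(V_\cR,\Phi_g)$, so is the quotient $V_\cR^{\mu_i}/V_\cR^{\mu_{i-1}}$; by definition $(V_{\cR,\mu_i},\Phi_{g,\mu_i})$ is isomorphic to this quotient as a $\phi$-module, so $\Phi_{g,\mu_i}$ has image spanning $V_{\cR,\mu_i}$ over $\cR$. Taking the direct sum $\Phi'_g=\bigoplus_i \Phi_{g,\mu_i}$ gives a $\phi$-linear endomorphism whose image spans $V_\cR$, as required.

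\textbf{Step 2: functoriality.} For $\alpha\in \Hom_G(V,W)$, because $\xi_g$ is a functor to $\QQ$-$\Grad_\cR$ the map $\alpha_\cR$ respects the gradings, i.e.\ $\alpha_\cR(V_{\cR,\mu})\subseteq W_{\cR,\mu}$ for every $\mu$. Since $\alpha_\cR$ also commutes with $\Phi_g$ (because $\alpha$ is $G$-equivariant, cf.\ Remark~\ref{R:monoid}), projecting the identity $\alpha_\cR\Phi_g(\bv)=\Phi_g\alpha_\cR(\bv)$ to the top graded piece for $\bv\in V_{\cR,\mu_j}$ yields $\alpha_\cR\circ \Phi_{g,\mu_j}=\Phi_{g,\mu_j}\circ \alpha_\cR$ on $V_{\cR,\mu_j}$. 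Summing over $j$ gives $\alpha_\cR\circ \Phi'_g=\Phi'_g\circ \alpha_\cR$.

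\textbf{Step 3: tensor compatibility.} Because $\xi_g$ is a tensor functor, $(V\otimes W)_{\cR,\mu}=\bigoplus_{\mu'+\mu''=\mu}V_{\cR,\mu'}\otimes W_{\cR,\mu''}$. For $\bv\otimes \bw\in V_{\cR,\mu_1}\otimes W_{\cR,\mu_2}$, expand
\[\Phi_g(\bv\otimes \bw)=\Phi_g(\bv)\otimes \Phi_g(\bw)=\ssum_{\mu_i\le \mu_1,\ \mu_j\le \mu_2}\bv_i\otimes \bw_j,\]
with $\bv_i\in V_{\cR,\mu_i}$, $\bw_j\in W_{\cR,\mu_j}$, and $\bv_{\mu_1}=\Phi_{g,\mu_1}(\bv)$, $\bw_{\mu_2}=\Phi_{g,\mu_2}(\bw)$. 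The $(\mu_1+\mu_2)$-component of this sum picks out only the term with $\mu_i=\mu_1$ and $\mu_j=\mu_2$, so $\Phi'_{g,V\otimes W}(\bv\otimes\bw)=\Phi'_{g,V}(\bv)\otimes \Phi'_{g,W}(\bw)$; $F$-linear extension gives the tensor compatibility on all of $(V\otimes W)_\cR$. The case of the identity object is trivial since $\id_\cR=\cR$ is concentrated in slope $0$.

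\textbf{Step 4: exactness, faithfulness, and forgetful compatibility.} Exactness follows because $\xi_g$ is exact (so a short exact sequence of $G$-representations induces term-wise short exact sequences of the graded pieces), and Step~2 shows the Frobenius structures are compatible with these sequences. The identity $\forg\circ \I'(g)=\omega^G\otimes \cR$ is immediate from the construction, and faithfulness then follows from the faithfulness of $\omega^G\otimes \cR$ (which is itself faithful because $V\hookrightarrow V_\cR$).

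The only step requiring a little care is Step~3: one must notice that among the decomposition of $\Phi_g(\bv\otimes\bw)$ only the ``top'' summand survives projection to $(V\otimes W)_{\cR,\mu_1+\mu_2}$, which is where the bound $\mu_i\le \mu_1$, $\mu_j\le \mu_2$ coming from the slope filtration is essential. Everything else reduces formally to properties of $\xi_g$ and the commutation of $\Phi_g$ with $G$-equivariant morphisms.
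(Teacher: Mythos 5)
Your proposal is correct and follows essentially the same route as the paper's own proof: functoriality is obtained by projecting the commutation $\alpha_\cR\circ\Phi_g=\Phi_g\circ\alpha_\cR$ onto the top graded piece using that the splitting $\xi_g$ respects gradings, and tensor compatibility is obtained by observing that among all terms $\bv_i\otimes\bw_j$ in $\Phi_g(\bv)\otimes\Phi_g(\bw)$ only the top-degree one survives projection to $(V\otimes W)_{\cR,\mu_1+\mu_2}$ (the paper words this as a direct-sum decomposition of $\Phi'_{g,\mu_l+\nu_s}$, but the content is identical). The only cosmetic differences are that you add an explicit Step~1 verifying that $\Phi'_g$ is indeed a $\phi$-module structure (the paper treats this as an immediate remark after Construction~\ref{C:Phi'}), and your citation of Remark~\ref{R:monoid} for the commutation of $\alpha_\cR$ with $\Phi_g$ is slightly off target — this is just the $G$-equivariance of $\alpha$ together with the definition of $\I(g)$ — but that is a minor point and does not affect the argument.
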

\begin{proof}
By Definition~\ref{D:GIsoc}, it amounts to show that $\I'(g)$ is an exact faithful $F$-linear tensor functor. In this proof, we fix $V,W\in\Rep_F(G)$, and suppose the slope filtration of $(V_\cR,g\phi)$ (resp.\ of $(W_\cR,g\phi)$) has jumps $\mu_1<\cdots<\mu_{l_V}$ (resp.\ $\nu_1<\cdots<\nu_{l_W}$).  

We first check the functoriality of $\I'(g)$ (the exactness, faithfulness and $F$-linearity will follow immediately). Given $\alpha\in\Hom_G(V,W)$, we need to show that
\[\alpha_\cR\circ \Phi_g'=\Phi_g'\circ \alpha_\cR.\]
For any fixed $1\leq l \leq {l_V}$, we have that $\alpha_\cR(V_{\cR,\mu_l})\subseteq W_{\cR,\mu_l}$ by Theorem~\ref{T:Q-splitting}. Notice that $W_{\cR,\mu_l}=W_{\cR,\nu_s}$ if $\mu_l=\nu_s$ for some $1\leq s\leq {l_W}$, and $W_{\cR,\mu_l}=0$ otherwise. In the latter case, it is clear that $\alpha_\cR\circ \Phi_g'=\Phi_g'\circ \alpha_\cR=0$ on $V_{\cR,\mu_l}$, and we are done. Suppose now we are in the former case. Let $\bv$ be a non-zero element in $V_{\cR,\mu_l}$. We then have $\Phi_g(\bv)\in V_\cR^{\mu_l}$ and $\alpha_\cR(\bv)\in W_{\cR,\nu_s}$. 
We have unique expressions 
\[ \Phi_g(\bv)= \ssum\limits_{i=1}^l \bv_i,~~~~~\bv_i\in V_{\cR,\mu_i},\]
and 
\[ \alpha_\cR\circ\Phi_g(\bv)=\ssum\limits_{i=1}^s \bw_i,~~~~~\bw_i\in W_{\cR,\nu_i},\]
therefore $\alpha_\cR(\bv_l)=\bw_s$.
We also write
\[ \Phi_g\circ \alpha_\cR(\bv)=\ssum\limits_{i=1}^s \bw_i', \;\;\;\bw_i'\in W_{\cR,\nu_i},\]
we then have $\bw_i=\bw_i'$ for $i=1,\cdots,s$, as $\alpha_\cR\circ\Phi_g=\Phi_g\circ \alpha_\cR$. We thus have $\alpha_\cR\circ\Phi_{g,{\mu_l}}(\bv)=\alpha_\cR(\bv_l)=\bw_s$ and $\Phi_{g,{\nu_s}}\circ\alpha_\cR(\bv)= \bw_s'=\bw_s$, which implies that $\alpha_\cR\circ\Phi_{g,{\mu_l}}=\Phi_{g,{\nu_s}}\circ\alpha_\cR$, as desired.

It remains to show that $\I'(g)$ preserves tensor products. Since $\tau_g$ is a tensor functor, the $(\mu_l+\nu_s)$-th graded piece of $\tau_g(V\otimes W)$ is then
\[ \big(V\stens\limits_F W \big)_{\cR,\mu_l+\nu_s}= \soplus\limits_{\mu_i+\nu_j=\mu_l+\nu_s \atop\\ 1\leq i\leq {l_V}, 1\leq j\leq {l_W}} \big( V_{\cR,\mu_i}\stens\limits_\cR W_{\cR,\nu_j } \big),\]
for all $1\leq l\leq l_V$ and $1\leq s \leq l_W$. It then follows from Construction~\ref{C:Phi'} (i) that 
\[ \Phi'_{g,\mu_l+\nu_s}=\soplus\limits_{\mu_i+\nu_j=\mu_l+\nu_s \atop\\ 1\leq i\leq {l_V},1\leq j\leq {l_W}} \big(\Phi'_{g,\mu_i} \otimes \Phi'_{g,\nu_j}\big),\]
which implies that $\I'(g)(V\stens W)$ coincides with $\I'(g)(V)\stens \I'(g)(W)$ on 
all $(V\stens W)_{\cR,\mu_l+\nu_s}$, whence on $(V\stens W)_{\cR}$. This completes the proof.

\end{proof}

With Lemma~\ref{L:I'}, we imitate \emph{Step 2} in the proof of Proposition~\ref{P:B} and have the following proposition.

\begin{propo}\label{P:z}
There exists a unique element $z\in G(\cR)$ such that $\I'(g)=\I(z)$.
\end{propo}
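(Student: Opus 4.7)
The plan is to imitate \emph{Step 2} of the proof of Proposition~\ref{P:B}, since Lemma~\ref{L:I'} has already established that $\I'(g)$ is an exact faithful $F$-linear tensor functor from $\Rep_F(G)$ to $\phimodcR$. Everything will then follow from tannakian reconstruction over $\tR$ plus a descent along the $\phi$-equivariant embedding $\psi\colon \cR\hookrightarrow \tR$.

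First I would replace the $\phi$-linear action $\Phi'_g$ on each $V_\cR$ by its $\tR$-linearization. Concretely, set $\tilde\Phi'_{g,V}\coloneqq \Phi'_{g,V}\otimes \phi$ on $V_{\tR}$ and consider
\[
\lambda_V\coloneqq \tilde\Phi'_{g,V}\circ (\Id_V\otimes \phi^{-1})\colon V_\tR\longto V_\tR,
\]
which is $\tR$-linear because $\tR$ is inversive. Since $\I'(g)$ is a tensor functor by Lemma~\ref{L:I'} and tensoring with $\tR$ preserves tensor structure, the family $\{\lambda_V\mid (V,\rho_V)\in\Rep_F(G)\}$ satisfies conditions (i,ii,iii) of Theorem~\ref{T:recover}. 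Hence there exists a unique $\tilde z\in G(\tR)$ such that $\lambda_V=\rho_V(\tilde z)$ for every $V$, equivalently $\tilde \Phi'_{g,V}=\tilde z\phi$ on $V_\tR$.

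Next I would descend $\tilde z$ to $\cR$ via Lemma~\ref{L:faithful}. Fix a $d$-dimensional faithful representation $(V,\rho_V)$ of $G$; then $\rho_V\colon G\hookrightarrow \GL_V$ is a closed embedding. Choose a basis of $V_\cR$ adapted to the decomposition $V_\cR=\bigoplus_{i=1}^l V_{\cR,\mu_i}$ from Construction~\ref{C:Phi'}. Since each $(V_{\cR,\mu_i},\Phi_{g,\mu_i})$ is isomorphic (as a $\phi$-module) to the pure slope quotient $V_\cR^{\mu_i}/V_\cR^{\mu_{i-1}}$ of the slope filtration, its Frobenius is represented by an invertible matrix over $\cR$. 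Therefore $\Phi'_g=\bigoplus_i \Phi_{g,\mu_i}$ is represented by a block-diagonal matrix $A\in \GL_d(\cR)$. Its image $\psi(A)\in \GL_d(\tR)$ represents both $\tilde\Phi'_g$ and $\rho_V(\tilde z)$ in the basis $e_i\otimes 1$. Applying Lemma~\ref{L:faithful} with $X=G$, $Y=\GL_d$, $S=\cR$, $\tilde S=\tR$, and $\iota=\psi$, we obtain a unique $z\in G(\cR)$ with $\psi(z)=\tilde z$, whence $\Phi'_g=z\phi$ on every $V_\cR$ (by the uniqueness assertion of Theorem~\ref{T:recover}, since this identity is already known after extending scalars to $\tR$ and $\psi$ is injective). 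Consequently $\I'(g)=\I(z)$, and uniqueness of $z$ follows from that of $\tilde z$ together with the injectivity of $G(\psi)$.

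I do not expect any serious obstacle: the one point requiring verification is that $A$ actually lies in $\GL_d(\cR)$ rather than only in $\GL_d(\tR)$, so that Lemma~\ref{L:faithful} is applicable; this is precisely where we use that each $V_{\cR,\mu_i}$ is itself a $\phi$-module over $\cR$ (not just over $\tR$), which is built into Construction~\ref{C:Phi'}.
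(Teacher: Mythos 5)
Your proposal is correct and is precisely what the paper intends: the paper's own "proof" is just the one-line remark preceding the proposition, namely to imitate Step~2 of the proof of Proposition~\ref{P:B}, which is exactly what you have carried out (tannakian reconstruction over $\tR$ to get $\tilde z$, then descent to $\cR$ via Lemma~\ref{L:faithful} using a faithful representation). Your extra check that $A\in\GL_d(\cR)$ is a reasonable sanity check, though it is automatic since $\Phi_g'$ is by construction a Frobenius structure on the $\cR$-module $V_\cR$ itself, not merely on $V_\tR$.
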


\subsection{$G$-$(\phi,\nabla)$-modules attached to splittings}\label{S:G-phi-nabla}

We fix $(g,X)\in\mathbf B^{\phi,\nabla}$. We also fix a splitting $\xi_g$ of $\HN_g$ given by Theorem~\ref{T:Q-splitting}.

We now look back at Construction~\ref{C:Phi'} (ii). We  claim that $\Theta_X'-\Id_V\otimes \partial \colon V_R \to V_R$ is $R$-linear for all $(V,\rho_V)\in\Rep_F(G)$. Let $1\leq j\leq l$ and let $v\otimes f\in V_{\cR,\mu_j}$. Suppose that $\Theta_X(v\otimes f)=\bsum_{i=1}^j \bv_i$ with $\bv_i\in V_{\cR,\mu_i}$. Then $\Theta_X'(v\otimes f)=\bv_j$ by construction. Let $f'\in R$. We compute
\begin{align*}
\Theta_X(v\otimes ff') &=v\otimes \partial(f)f'+v\otimes f\partial(f')+X (v\otimes ff')\\
&= \big( v\otimes \partial(f)+ X(v\otimes f) \big)f'+v\otimes f\partial(f')\\
&= \Theta_X(v\otimes f)f' +v\otimes f\partial(f')\\
&=f' \ssum_{i=1}^j \bv_i+v\otimes f\partial(f'),
\end{align*}
which implies that $\Theta_X(v\otimes ff')=f'\bv_j+v\otimes f\partial(f')$. We thus have
\begin{align*}
(\Theta_X'-\Id_V\otimes \partial)(v\otimes ff') &=f'\bv_j+v\otimes f\partial(f')- v\otimes (ff')\\
&= f'\bv_j+v\otimes f\partial(f')-v\otimes \partial(f)f'-v\otimes f\partial(f')\\
&=f' (\bv_j-v\otimes \partial(f))\\
&= f' (\Theta_X'-\Id_V\otimes \partial)(v\otimes f),
\end{align*}
as desired.

The following proposition (and it's proof) is analogous to Lemma~\ref{L:I'}.

\begin{propo}\label{P:X_0}
There exists a unique element $X_0\in \fg_\cR$ such that $\Theta_X'=\Theta_{X_0}$.
\end{propo}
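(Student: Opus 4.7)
The approach is to invoke Corollary~\ref{C:Lie-tannakian} applied to the family of $\cR$-linear endomorphisms
\[\theta'_V \coloneqq \Theta'_X - \Id_V \otimes \partial \colon V_\cR \longto V_\cR,\qquad V\in\Rep_F(G),\]
whose $\cR$-linearity was verified in the paragraph preceding the proposition. If this family satisfies axioms (i), (ii), (iii) of Corollary~\ref{C:Lie-tannakian}, then there is a unique $X_0\in\fg_\cR$ with $\Lie(\rho_V)(X_0)=\theta'_V$, and adding back $\Id_V\otimes\partial$ yields $\Theta_{X_0}=\Theta'_X$ with uniqueness automatic. Thus the task reduces to checking the three axioms.

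Condition (ii) is immediate: for $V=\id=F$, the slope filtration of $(\cR,g\phi)$ is trivial with unique jump $0$, so $\Theta'_X=\Theta_X=\Id\otimes\partial$, forcing $\theta'_{\id}=0$. For condition (iii), let $\alpha\in\Hom_G(V,W)$; since $\alpha_\cR$ commutes with $\Id\otimes\partial$, it suffices to prove $\alpha_\cR\circ\Theta'_{X,V}=\Theta'_{X,W}\circ\alpha_\cR$. The functoriality of the splitting $\xi_g$ (which is a functor by Theorem~\ref{T:Q-splitting}) gives $\alpha_\cR(V_{\cR,\mu_i})\subseteq W_{\cR,\mu_i}$; combining this with the commutation $\alpha_\cR\circ\Theta_{X,V}=\Theta_{X,W}\circ\alpha_\cR$ from Lemma~\ref{L:tensor-Theta} and the uniqueness of the decomposition of $\Theta_{X,V}(v)$ along $\bigoplus_{i'\le i} V_{\cR,\mu_{i'}}$, the argument proceeds verbatim as the functoriality check in the proof of Lemma~\ref{L:I'}, with $\Phi$ replaced by $\Theta_X$.

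For condition (i), first observe the Leibniz identity
\[\Id_{V\otimes W}\otimes\partial = (\Id_V\otimes\partial)\otimes \Id_{W_\cR} + \Id_{V_\cR}\otimes (\Id_W\otimes\partial)\]
as $\cR$-linear derivations on $V_\cR\otimes_\cR W_\cR$. Thus the desired identity reduces to
\[\Theta'_{X,V\otimes W} = \Theta'_{X,V}\otimes \Id_{W_\cR} + \Id_{V_\cR}\otimes \Theta'_{X,W}.\]
Since $\xi_g$ is a tensor functor, $V_{\cR,\mu_i}\otimes_\cR W_{\cR,\nu_j}$ is precisely the $(i,j)$-summand of $(V\otimes W)_{\cR,\mu_i+\nu_j}$. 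For $v\in V_{\cR,\mu_i}$ and $w\in W_{\cR,\nu_j}$, Lemma~\ref{L:tensor-Theta} gives $\Theta_{X,V\otimes W}(v\otimes w)=\Theta_{X,V}(v)\otimes w+v\otimes\Theta_{X,W}(w)$; decomposing along the splitting and projecting onto the $(\mu_i+\nu_j)$-piece picks out exactly $\Theta'_{X,V}(v)\otimes w + v\otimes\Theta'_{X,W}(w)$, as the other summands $v_{i'}\otimes w$ (with $i'<i$) and $v\otimes w_{j'}$ (with $j'<j$) sit in different graded pieces. The identity then extends by direct-sum decomposition to all of $(V\otimes W)_\cR$.

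The main obstacle is purely bookkeeping: one must carefully track how the gradings, projections, and morphisms interact under tensor products. The deeper input — that $\Theta_X$ preserves the slope filtration — is already packaged into the definition of $\Theta'_X$ via the slope filtration theorem for $(\phi,\nabla)$-modules, and the tensor-functoriality of $\xi_g$ from Theorem~\ref{T:Q-splitting} does the rest.
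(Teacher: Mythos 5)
Your proof is correct and follows essentially the same route as the paper: reduce to Corollary~\ref{C:Lie-tannakian} applied to $\theta'_V=\Theta'_X-\Id_V\otimes\partial$, check axiom (ii) on the trivial representation, axiom (iii) by transplanting the functoriality argument from Lemma~\ref{L:I'} with $\Phi_g$ replaced by $\Theta_X$, and axiom (i) via the Leibniz identity plus the tensor-functoriality of the chosen splitting. Your treatment of axiom (i) — observing that the cross terms $v_{i'}\otimes w$ with $i'<i$ and $v\otimes w_{j'}$ with $j'<j$ land in strictly lower graded pieces $(\mu_{i'}+\nu_j)$ and $(\mu_i+\nu_{j'})$ and so vanish under the $(\mu_i+\nu_j)$-projection — is a cleaner way of phrasing the displayed computation in the paper, but it is the same underlying argument.
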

\begin{proof}
For any $(V,\rho_V)\in \Rep_F(G)$, we define $\theta_V\coloneqq \Theta_X'-\Id_V\otimes \partial$. We claim that the family
\begin{align*}
\big\{\theta_V\colon V_\cR\to V_\cR \mid (V,\rho_V)\in \Rep_F(G) \big\}
\end{align*}
of $R$-linear endomorphisms satisfies conditions (i,ii,iii) in Corollary~\ref{C:Lie-tannakian}. The lemma will follow immediately.

It is clear that $\theta_V=0$ if $V=F$ is the trivial $G$-representation. For the remainder of the proof, we fix $(V,\rho_V),(W,\rho_W)\in \Rep_F(G)$, and suppose the slope filtration of $(V_\cR,g\phi)$ (resp.\ of $(W_\cR,g\phi)$) has jumps $\mu_1<\cdots<\mu_{l_V}$ (resp.\ $\nu_1<\cdots<\nu_{l_W}$). Let $\alpha\in\Hom_G(V,W)$. To show that $\theta_V\circ \alpha_\cR =\alpha_\cR\circ \theta_W$, it suffices to show that $\Theta_X'\circ \alpha_\cR =\alpha_\cR \circ \Theta_X'$. Notice that $\alpha_\cR$ respects gradings. Replacing $\Phi_g$ with $\Theta_X$ (possibly with proper decorations) in the second paragraph of the proof of Lemma~\ref{L:I'}, we have the desired result.

It remains to show that
\[\theta_{V\otimes W} =\theta_V\otimes \Id_{W_\cR} +\Id_{V_\cR} \otimes \theta_W.\]
Since $\tau_g$ is a tensor functor, the $(\mu_l+\nu_s)$-th graded piece of $\tau_g(V\bigotimes W)$ is then
\[ \big(V \stens W \big)_{\cR,\mu_l+\nu_s}= \soplus\limits_{\mu_i+\nu_j=\mu_l+\nu_s \atop\\ 1\leq i\leq {l_V}, 1\leq j\leq {l_W}} \big( V_{\cR,\mu_i}\stens_\cR W_{\cR,\nu_j } \big),\]
for all $1\leq l\leq l_V$ and $1\leq s \leq l_W$. It follows from Lemma~\ref{L:tensor-Theta} and Construction~\ref{C:Phi'} that
\[ \Theta'_{X,\mu_l+\nu_s}=\soplus\limits_{\mu_i+\nu_j=\mu_l+\nu_s \atop\\ 1\leq i\leq {l_V}, 1\leq j\leq {l_W}} \big(\Theta'_{X,\mu_i}\otimes \Id_{W_{\cR,\nu_j }}+ \Id_{V_{\cR,\mu_i }} \otimes \Theta'_{X,\nu_j} \big).\]
Let $v\otimes f\otimes w\otimes f'\in V_{\cR,\mu_i}\stens_\cR W_{\cR,\nu_j }$. We compute
\begin{align*}
& \big( \theta_V\otimes \Id_{W_\cR} +\Id_{V_\cR} \otimes \theta_W \big)(v\otimes f\otimes w\otimes f')\\
=& \big(\Theta'_{X,\mu_i}(v\otimes f)-v\otimes \partial(f)\big) \otimes w\otimes f'+ v\otimes f \otimes \big( \Theta'_{X,\nu_j}(w\otimes f')-w\otimes\partial(f') \big)\\
=& \big( \Theta'_{X,\mu_i}\otimes \Id_{W_{\cR,\nu_j }}+ \Id_{V_{\cR,\mu_i }} \otimes \Theta'_{X,\nu_j}\big)(v\otimes f\otimes w\otimes f')-  v\otimes 1\otimes w\otimes \partial(ff') \\
=& \big(\Theta'_{X,\mu_l+\nu_s}-\Id_{V\otimes W}\otimes \partial \big)(v\otimes w\otimes ff')\\
=& \theta_{V\otimes W} (v\otimes w\otimes ff'),
\end{align*}
which completes the proof.

\end{proof}

We now summarize what we have shown thus far. The splitting $\xi_g$ of $\HN_g$ gives a unique element $z\in G(\cR)$ such that $\I'(g)=\I(z)$ by Proposition~\ref{P:z}, and a unique element $X_0\in \fg_\cR$ such that $\Theta'_X=\Theta_{X_0}$ by Proposition~\ref{P:X_0}. These two elements are related as in Proposition~\ref{P:z-X_0} below. We begin with some relative facts.

\begin{rmk}\label{R:para}
This remark is essentially from~\cite[\S 2.1]{cgp}. Let $k$ be a commutative ring with $1$, and let $\fG$ be a reductive $k$-group.  Hereupon, we denote by $\kk(s)$ the residue field of $s$ and $\bar\kk(s)$ the algebraic closure of $\kk(s)$, for all $s\in\Spec k$.
A subgroup $\fP$ of $\fG$ is a \emph{parabolic} (resp.\ \emph{Borel}) subgroup if $\fP$ is smooth and $\fP_{\bar\kk(s)}$ is a parabolic (resp.\ Borel) subgroup of $\fG_{\bar\kk(s)}$, for all $s\in\Spec k$.

Suppose we have a cocharacter $\lambda \colon \GG_m\to \fG$ over $k$. For any $k$-algebra $R$, we let $\GG_{m,R}$ act on $\fG_R$ via the  conjugation
\[
\GG_{m,R}(S) \times \fG_R(S) \longto \fG_R(S),\;\;\;\;(t,x)\longmapsto t.x\coloneqq  \lambda(t)x\lambda(t)^{-1}
\]
for all $R$-algebra $S$. For any $x\in \fG(R)$, we have an \emph{orbit map} $\alpha_x\colon \GG_{m,R} \to \fG_R$ given by
\[\alpha_x\colon \GG_{m,R}(S) \longto \fG_R(S),\;\;\;\; t \longmapsto  t.x\]
for all $R$-algebras $S$. Let $\dA^1$ be the affine $k$-line. We say that the \emph{limit} $\lim\limits_{t \to 0} t.x$ exists if $\alpha_x$ extends (necessarily uniquely) to a morphism $\tilde\alpha_x\colon \dA^1_R \to \fG_R$ of affine $R$-schemes, and put $\lim\limits_{t \to 0} t.x \coloneqq \tilde\alpha_x(0)\in \fG_R(R)$. We define
\[P_\fG(\lambda) (R)\coloneqq \big\{x\in \fG(R) \mid \lim\limits_{t \to 0} t.x~\text{exists} \big\},\]
\[U_\fG(\lambda) (R)\coloneqq \big\{x\in \fG(R) \mid \lim\limits_{t \to 0} t.x=1 \big\},\]
and
\[Z_\fG(\lambda) (R)\coloneqq P_\fG(\lambda)(R)\cap P_\fG(-\lambda)(R),\]
where $-\lambda$ is the inverse of $\lambda$. Then $P_\fG(\lambda)$ is a closed $k$-subgroup of $\fG$ (\cite[Lemma 2.1.4]{cgp}), $U_\fG(\lambda)$ is an affine algebraic $k$-normal subgroup of $P_\fG(\lambda)$, and $Z_\fG(\lambda)$ is the centralizer of the $\GG_m$-action in $\fG$ (\cite[Lemma 2.1.5]{cgp}). By~\cite[Proposition 2.1.8 (3)]{cgp}, these subgroups are smooth because $\fG$ is smooth.

It follows from the definitions that the formations of $P_\fG(\lambda),U_\fG(\lambda)$ and $Z_\fG(\lambda)$ commute with any base extension on $k$. In particular, for every $s\in\Spec k$ we have $P_\fG(\lambda)_{\bar\kk(s)}
=P_{\fG_{\bar\kk(s)}}(\lambda_{\bar\kk(s)})$, which is a parabolic subgroup of $\fG_{\bar\kk(s)}$ by~\cite[Proposition 8.4.5]{springer}. Hence, $P_\fG(\lambda)$ is a parabolic $k$-group. 

By~\cite[Proposition 2.1.8 (2)]{cgp}, the multiplication map gives an isomorphism
\[U_\fG(\lambda) \rtimes Z_\fG(\lambda) \longto P_\fG(\lambda)\]
of affine algebraic $k$-groups. 

Now let $\GG_m$ act on $\fg=\Lie(\fG)(k)$ through the adjoint representation. We then have $\fg=\soplus\limits_{n\in \ZZ} \fg_n$, where $\fg_n=\{X\in \fg \mid t.X=t^nX,\forall t\in \GG_m\}$ for all $n\in \ZZ$. We have $\Lie\big(Z_\fG(\lambda) \big) =\fg_0$ (which is the centralizer of the $\GG_m$-action on $\fg$), $\Lie\big(U_\fG(\lambda) \big) =\soplus\limits_{n>0} \fg_n$, and $\Lie\big(P_\fG(\lambda) \big) =\soplus\limits_{n\geq 0} \fg_n$. In particular, we have the following decomposition
\begin{equation}\label{E:Lie-decomp}
\Lie\big(P_\fG(\lambda) \big)=\Lie\big(Z_\fG(\lambda) \big)\textstyle\soplus \Lie\big(U_\fG(\lambda) \big).
\end{equation}
\end{rmk}

\begin{lem}\label{L:Lie(U)}
With the notion above, we have
\[Z-\Ad(u)(Z)\in \Lie\big( U_\fG(\lambda)\big),\]
for all $u\in U_\fG(\lambda)(k)$ and $Z\in \Lie\big( Z_\fG(\lambda)\big)$.
\end{lem}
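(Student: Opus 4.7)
The plan is to use the $\GG_m$-action through $\lambda$ to resolve $\Ad(u)(Z)$ into its weight components. Write $\fg = \bigoplus_{n \in \ZZ} \fg_n$ with respect to the weight decomposition from Remark~\ref{R:para}, so that $\Lie(Z_\fG(\lambda)) = \fg_0$ and $\Lie(U_\fG(\lambda)) = \bigoplus_{n>0} \fg_n$. Since $Z \in \fg_0$, what needs to be shown is that $\Ad(u)(Z)$ has $\fg_0$-component equal to $Z$ and no negative weight components.

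To extract this, I would consider the orbit map $\alpha_u \colon \GG_{m,k} \to \fG_k$, $t \mapsto \lambda(t)u\lambda(t)^{-1}$. By definition of $U_\fG(\lambda)$, this extends to a morphism $\tilde\alpha_u\colon \dA^1_k \to \fG_k$ with $\tilde\alpha_u(0)=1$. Composing with the adjoint representation applied to $Z$, we obtain a morphism of $k$-schemes
\[
\beta\colon \dA^1_k \longto \fg, \;\;\; t \longmapsto \Ad(\tilde\alpha_u(t))(Z),
\]
whose value at $t=0$ is $\Ad(1)(Z)=Z$. On the other hand, for $t \in \GG_m$ one computes
\[
\beta(t)=\Ad(\lambda(t))\,\Ad(u)\,\Ad(\lambda(t)^{-1})(Z)=\Ad(\lambda(t))\bigl(\Ad(u)(Z)\bigr),
\]
using $\Ad(\lambda(t)^{-1})(Z)=Z$, which holds because $Z \in \fg_0$.

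Now decompose $\Ad(u)(Z)=\sum_{n \in \ZZ} Y_n$ with $Y_n \in \fg_n$; then $\beta(t)=\sum_{n \in \ZZ} t^n Y_n$ on $\GG_m$. The fact that $\beta$ extends to a morphism $\dA^1 \to \fg$, i.e.\ is a polynomial in $t$, forces $Y_n=0$ for all $n<0$. Evaluating the resulting identity $\beta(t)=\sum_{n \geq 0} t^n Y_n$ at $t=0$ and comparing with $\beta(0)=Z$ yields $Y_0=Z$. Hence
\[
\Ad(u)(Z)=Z + \ssum\limits_{n>0} Y_n,
\]
so that $Z - \Ad(u)(Z) = -\sum_{n>0} Y_n \in \bigoplus_{n>0} \fg_n = \Lie(U_\fG(\lambda))$, as required.

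The only real subtlety is the justification that $\beta$ extends to $\dA^1$: it follows at once by composing the extension $\tilde\alpha_u$ with the morphism $\fG \to \fg$, $g \mapsto \Ad(g)(Z)$ (which is a morphism of affine $k$-schemes). Once this is observed, the weight-decomposition argument is mechanical; no deeper structure theory of parabolics is needed beyond the facts collected in Remark~\ref{R:para}.
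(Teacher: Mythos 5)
Your proof is correct, but it takes a genuinely different route from the paper's. The paper's argument is group-theoretic: it views $Z\in\Lie(Z_\fG(\lambda))$ as an element of $\Ker\big(\fG(k[\varepsilon])\to\fG(k)\big)$, lifts $u$ into $\fG(k[\varepsilon])$ via $k\hookrightarrow k[\varepsilon]$, rewrites $Z-\Ad(u)(Z)$ as the commutator $ZuZ^{-1}u^{-1}$ in $\fG(k[\varepsilon])$, and then uses the normality of $U_\fG(\lambda)$ inside $P_\fG(\lambda)$ to place the commutator in $U_\fG(\lambda)(k[\varepsilon])$; the conclusion follows because the commutator also dies under $\varepsilon\mapsto 0$. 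You instead work entirely inside $\fg$ with the weight decomposition $\fg=\bigoplus_n\fg_n$: you compose the extended orbit map $\tilde\alpha_u\colon\dA^1\to\fG$ with $g\mapsto\Ad(g)(Z)$, use $Z\in\fg_0$ to identify the restriction to $\GG_m$ as $t\mapsto\sum_n t^n Y_n$ where $\Ad(u)(Z)=\sum_n Y_n$, and let the extension across $0$ kill the negative weights and fix $Y_0=Z$ by evaluation. Both rely on facts from Remark~\ref{R:para} (normality of $U_\fG(\lambda)$ for the paper, $\Lie(U_\fG(\lambda))=\bigoplus_{n>0}\fg_n$ and $\Lie(Z_\fG(\lambda))=\fg_0$ for you), and both are valid over a general base ring $k$. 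Your version has the mild advantage of exhibiting the element $Z-\Ad(u)(Z)$ explicitly as $-\sum_{n>0}Y_n$; the paper's version is shorter and stays closer to the group-scheme formalism it sets up for $P_\fG$, $U_\fG$, $Z_\fG$. Either proof would serve.
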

\begin{proof}
Recall that $Z\in Z_\fG(\lambda)(k[\varepsilon])$ by definition; we may also view $u$ as an element in $U_\fG(\lambda)(k[\varepsilon])$ via the inclusion $\iota\colon k\hookrightarrow k[\varepsilon]$. By the definition of the adjoint representation, we have 
\[Z-\Ad(u)(Z)=Z(uZu^{-1})^{-1}=ZuZ^{-1}u^{-1} \in P_\fG(\lambda)(k[\varepsilon]).\]
Because $U_\fG(\lambda)$ is normal in $P_\fG(\lambda)$, we have that $ZuZ^{-1}\in U_\fG(\lambda)(k[\varepsilon])$, and so is $ZuZ^{-1}u^{-1}$. Consider the following commutative diagram
\[\begin{tikzcd}
U_\fG(\lambda)(k[\varepsilon])\ar[d]   \ar[rr,hook] && P_\fG(\lambda)(k[\varepsilon])\ar[d]\\
U_\fG(\lambda)(k)  \ar[rr,hook] && P_\fG(\lambda)(k)
\end{tikzcd}\]
Since both $Z$ and $uZ^{-1}u^{-1}$ lie in the kernel of the right vertical map, so does their product $ZuZ^{-1}u^{-1}$. Hence, $ZuZ^{-1}u^{-1}\in U_\fG(\lambda)(k[\varepsilon])$ lies in the kernel of the left vertical map. The lemma then follows.
\end{proof}

\begin{propo}\label{P:z-X_0}
Let $z\in G(\cR)$ and $X_0\in \fg_\cR$ be the unique elements given by Proposition~\ref{P:z} and Proposition~\ref{P:X_0}, respectively. We have $X_0=\Gamma_z \big(\muu\phi(X_0)\big)$. In particular, $\I(z,X_0)$ is a $G$-$(\phi,\nabla)$-module over $\cR$.
\end{propo}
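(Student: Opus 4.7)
The plan is as follows. By Proposition~\ref{P:B}, to prove $X_0 = \Gamma_z(\muu\phi(X_0))$ it suffices to establish that $\I(z, X_0)\colon V \longmapsto (V_\cR, \Phi_z, \nabla_{X_0})$ is a $G$-$(\phi,\nabla)$-module over $\cR$, at which point the inverse of the isomorphism in Proposition~\ref{P:B} returns the pair $(z, X_0)$ (unique by matching Frobenius and differential operator on any faithful representation) as an element of $\bB^{\phi,\nabla}(G,\cR)$, which is exactly the gauge condition. That $\I(z, X_0)$ already defines an exact faithful $F$-linear tensor functor from $\Rep_F(G)$ into $\phi$-modules equipped with connections is in hand: the $\phi$-module part $\I'(g) = \I(z)$ comes from Proposition~\ref{P:z} and Lemma~\ref{L:I'}, while the connection part $V \mapsto \nabla_{X_0}$ is treated in Proposition~\ref{P:X_0}. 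What is left is the gauge compatibility between $\Phi_z = \Phi_g'$ and the differential operator $\Theta_{X_0} = \Theta_X'$, i.e., the operator identity $\muu \cdot \Phi_g' \circ \Theta_X' = \Theta_X' \circ \Phi_g'$ on $V_\cR$ for every $V$ (which by the computation in Lemma~\ref{L:phi-nabla} is the operator form of the matricial gauge condition).

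The crucial input I would invoke is Kedlaya's slope filtration theorem for $(\phi,\nabla)$-modules~\cite[Theorem~6.12]{ked-ann}: the slope filtration $0 \subseteq V_\cR^{\mu_1} \subseteq \cdots \subseteq V_\cR^{\mu_l} = V_\cR$ of the $(\phi,\nabla)$-module $(V_\cR, g\phi, \nabla_X)$ is a filtration by sub-$(\phi,\nabla)$-modules, so both $\Phi_g$ and $\Theta_X$ preserve each $V_\cR^{\mu_j}$. Fix $1 \leq j \leq l$ and $\bv \in V_{\cR, \mu_j}$. Write $\Phi_g(\bv) = \sum_{i \leq j} \bv_i$ and $\Theta_X(\bv) = \sum_{i \leq j} \bw_i$ with $\bv_i, \bw_i \in V_{\cR, \mu_i}$, so that $\Phi_g'(\bv) = \bv_j$ and $\Theta_X'(\bv) = \bw_j$ by Construction~\ref{C:Phi'}. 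Computing $\Phi_g \circ \Theta_X(\bv) = \sum_{i \leq j} \Phi_g(\bw_i)$ and extracting the $V_{\cR, \mu_j}$-summand of the splitting, I observe that the terms with $i < j$ satisfy $\Phi_g(\bw_i) \in V_\cR^{\mu_i} \subseteq V_\cR^{\mu_{j-1}}$ and thus contribute $0$, while the $i = j$ term contributes $\Phi_g'(\bw_j) = \Phi_g' \circ \Theta_X'(\bv)$; symmetrically, the $V_{\cR,\mu_j}$-component of $\Theta_X \circ \Phi_g(\bv)$ is $\Theta_X' \circ \Phi_g'(\bv)$. Projecting the gauge identity $\muu \cdot \Phi_g \circ \Theta_X = \Theta_X \circ \Phi_g$ (which holds by $(g,X) \in \bB^{\phi,\nabla}(G,\cR)$) onto $V_{\cR, \mu_j}$ thus yields $\muu \cdot \Phi_g' \circ \Theta_X' = \Theta_X' \circ \Phi_g'$ on each $V_{\cR, \mu_j}$, and hence on all of $V_\cR$.

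By Lemma~\ref{L:phi-nabla}, this identity makes each $(V_\cR, \Phi_z, \nabla_{X_0})$ a $(\phi,\nabla)$-module over $\cR$, so $\I(z, X_0)$ is a $G$-$(\phi,\nabla)$-module, and the gauge condition $X_0 = \Gamma_z(\muu\phi(X_0))$ follows from Proposition~\ref{P:B} as explained above. The main obstacle is precisely the projection argument in the middle paragraph; it relies essentially on the refinement that the slope filtration is by sub-$(\phi,\nabla)$-modules and not merely by sub-$\phi$-modules, since without $\Theta_X$ preserving the filtration, the strictly lower-order terms $\Phi_g(\bw_i)$ for $i<j$ would generally have nonzero $V_{\cR,\mu_j}$-component and the gauge compatibility would fail to descend to the diagonal parts $\Phi_g'$ and $\Theta_X'$.
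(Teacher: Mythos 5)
Your proof is correct and takes a genuinely different route from the paper's. The paper argues matricially: it fixes a basis adapted to the splitting, writes the Frobenius matrix as $A = ZU$ with $Z$ block-diagonal and $U$ block-unipotent, applies $\Ad(U^{-1})$ to the matrix form of the gauge identity, and then isolates the block-diagonal part by invoking the decomposition $\Lie\big(P_\fG(-\lambda_{\rho,g})\big) = \Lie\big(Z_\fG(-\lambda_{\rho,g})\big) \oplus \Lie\big(U_\fG(-\lambda_{\rho,g})\big)$ together with Lemma~\ref{L:Lie(U)}, which requires setting up the parabolic subgroup machinery of Remark~\ref{R:para}. You instead work directly with operators: you observe that the splitting's $\cR$-linear projection onto each graded piece $V_{\cR,\mu_j}$, applied to the gauge identity $\muu\cdot\Phi_g\circ\Theta_X=\Theta_X\circ\Phi_g$ restricted to $V_{\cR,\mu_j}$, kills all terms through the strictly lower filtration steps precisely because both $\Phi_g$ and $\Theta_X$ preserve the slope filtration (the latter by~\cite[Theorem 6.12]{ked-ann}, i.e.\ the slope filtration theorem for $(\phi,\nabla)$-modules), leaving exactly $\muu\cdot\Phi_g'\circ\Theta_X'=\Theta_X'\circ\Phi_g'$. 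This is cleaner in that it avoids both the explicit matrix bookkeeping and the Lie-theoretic decomposition of the parabolic, and it makes the dependence on Kedlaya's refined slope filtration theorem explicit (the paper uses it only implicitly when asserting $\Theta_X(\bv)\in V_\cR^{\mu_j}$ in Construction~\ref{C:Phi'}(ii)). The trade-off is that the paper's matrix computation makes visible the role of the parabolic $P_\fG(-\lambda_{\rho,g})$, which reappears in the statement of the main theorem; your argument derives the same conclusion without that structural commentary. One small remark: at the end you should, as the paper does, fix a faithful representation $\rho$ to convert the operator identity $\muu\cdot\Phi_z\circ\Theta_{X_0}=\Theta_{X_0}\circ\Phi_z$ into the matricial gauge identity, and then use injectivity of $\Lie(\rho)$ to descend $X_0 = \Gamma_z(\muu\phi(X_0))$ to $\fg_\cR$; your appeal to the uniqueness in Proposition~\ref{P:B} achieves the same thing but is slightly more indirect.
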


\begin{proof}
The second assertion follows from the first assertion and Lemma~\ref{L:phi-nabla}. For the first assertion, we need to show 
\begin{equation}\label{E:z-X_0}
X_0=\muu \cdot \Ad(z)\big(\phi(X_0)\big)-\dlog(z).
\end{equation}
It suffices to show~\eqref{E:z-X_0} with both sides understood as elements in $\End_\cR(V_\cR)$ for some faithful representation $(V,\rho)\in\Rep_F(G)$. Suppose that $\dim_F V=d$, and suppose that $\nu_g(V)$ is the decomposition $V_\cR=\bigoplus\limits_{i=1}^l V_{\cR,\mu_i}$. We choose for each graded-piece $V_{\cR,\mu_i}$ a basis. They altogether give a basis $\bv_1,\cdots,\bv_d$ of $V_\cR$, in which $\Phi_g$ acts via a block-upper-triangular matrix 
\begin{equation*}\footnotesize
A=
\begin{pmatrix}
A_1  & &\\   
& A_2  &~~~~\Ast\\
 && \ddots  \\
&&     &A_l
\end{pmatrix}\in\GL_d(\cR),
\end{equation*}
where each $A_i$ is an $m_i$ by $m_i$ invertible matrix with $m_i$ the multiplicity of $\mu_i$. Then $\Phi_z$ acts in this basis via $Z\coloneqq \Diag(A_1,\cdots,A_l)$. Likewise, $\Theta_X$ acts in the  basis $\bv_1,\cdots,\bv_d$ via a block-upper-triangular matrix  
\begin{equation*}\footnotesize
N=
\begin{pmatrix}
N_1  & &\\   
& N_2  &~~~~\Ast \\
 && \ddots  \\
&&     &N_l
\end{pmatrix}\in\Mat_{d,d}(\cR),
\end{equation*}
where each $N_i$ is an $m_i$ by $m_i$ matrix, and $\Theta_{X_0}$ acts via $\overline N\coloneqq \Diag(N_1,\cdots,N_l)$. Write $A=ZU$ for $U\in \GL_d(\cR)$, and $N=\overline N+N_+$ for $N_+\in\Mat_{d,d}(\cR)$. Since $X=\Gamma_g\big(\muu\phi(X)\big)$, we have $N=\muu\cdot A \phi(N) A^{-1}-\partial(A)A^{-1}$, and then
\begin{align*}
 \overline N+N_+
=& \muu\cdot (UZ)(\phi(\overline N+N_+))(UZ)^{-1}-\partial(UZ)(UZ)^{-1}\\
=& \muu\cdot(UZ)\phi(\overline N)Z^{-1}U^{-1}+\muu\cdot(UZ)\phi(N_+)Z^{-1}U^{-1}-\partial(U)U^{-1}-U\partial(Z)Z^{-1}U^{-1}.
\end{align*}
Applying $\Ad(U^{-1})$ on both sides, we then have
\begin{align*}
&\muu\cdot Z\phi(\overline N)Z^{-1}-\partial(Z)Z^{-1}+\muu\cdot Z\phi(N_+)Z^{-1}-U^{-1}\partial(U)\\
=& U^{-1}\overline NU+U^{-1}N_+U
= \overline N -(\overline N-U^{-1}\overline NU-U^{-1}N_+U).
\end{align*}

We claim that $\muu\cdot Z\phi(\overline N)Z^{-1}-\partial(Z)Z^{-1}=\overline N$. Put $\lambda_{\rho,g}\coloneqq \rho\circ\lambda_g \colon\GG_{m,\cR} \to \GL_{V,\cR}$, where $\lambda_g\colon \GG_{m,\cR} \to G_{\cR}$ is the slope morphism defined in Construction~\ref{C:slope-morphism}. Identifying $\GL_{V,\cR}$ with $\GL_{d,\cR}$ via the basis $\bv_1,\cdots,\bv_d$ given in the preceding paragraph, and letting $\fG=\GL_{d,\cR}$, we then have an isomorphism
\[U_\fG(-\lambda_{\rho,g}) \rtimes Z_\fG(-\lambda_{\rho,g}) \cong P_\fG(-\lambda_{\rho,g})\]
of affine algebraic $\cR$-groups. Since $\mu_1< \cdots< \mu_l$, we have
\[A\in P_\fG(-\lambda_{\rho,g})(\cR),~ U\in U_\fG(-\lambda_{\rho,g})(\cR),~ Z\in Z_\fG(-\lambda_{\rho,g})(\cR);\]
\[N\in \Lie\big( P_\fG(-\lambda_{\rho,g})\big),~ N_+\in \Lie\big( U_\fG(-\lambda_{\rho,g})\big),~\overline N\in \Lie\big( Z_\fG(-\lambda_{\rho,g})\big).\]
It follows from Lemma~\ref{L:Lie(U)} that $\overline N-U^{-1}\overline NU \in \Lie \big( U_\fG(-\lambda_{\rho,g})\big)$. In particular, we have $\overline N-U^{-1}\overline NU-U^{-1}N_+U \in \Lie \big( U_\fG(-\lambda_{\rho,g})\big)$. On the other hand, it is clear that $\muu\cdot Z\phi(\overline N)Z^{-1}-\partial(Z)Z^{-1}\in \Lie\big( Z_\fG(-\lambda_{\rho,g})\big)$ and $\muu\cdot Z\phi(N_+)Z^{-1}-U^{-1}\partial(U)\in \Lie \big( U_\fG(-\lambda_{\rho,g})\big)$. By decomposition~\eqref{E:Lie-decomp}, we have $\muu\cdot Z\phi(\overline N)Z^{-1}-\partial(Z)Z^{-1}=\overline N$, and the desired equality~\eqref{E:z-X_0} follows.

\end{proof}

Recall that the least common denominator $d_g$ of $g$ is constructed in ~Construction~\ref{C:d_g}, and $\lambda_g \colon \GG_{m,\cR} \to G_\cR$ is the slope morphism (see Construction~\ref{C:slope-morphism}). We next reduce the $G$-$(\phi,\nabla)$-module $(z,X_0)$ over $\cR$ to a unit-root one by applying the pushforward functor $[d_g]_*$ and \emph{twisting} by $\lambda_g(\pi^{-1})$.

\begin{cor}\label{C:unit-root}
$\I \big(\lambda_g(\pi^{-1})[d_g]_*(z),X_0 \big)$ is a unit-root $G$-$(\phi^{d_g},\nabla)$-module over $\cR$.
\end{cor}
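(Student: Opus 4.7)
The plan is to establish two things separately: first, that the pair $\bigl(\lambda_g(\pi^{-1})[d_g]_*(z),\,X_0\bigr)$ defines an object in $\bB^{\phi^{d_g},\nabla}(G,\cR)$ (so the functor $\I(\func)$ genuinely lands in $G$-$(\phi^{d_g},\nabla)$-modules), and second, that for every $V\in\Rep_F(G)$ the underlying $\phi^{d_g}$-module is unit-root. Set $c\coloneqq\lambda_g(\pi^{-1})\in G(\cR)$ for brevity.

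For the gauge compatibility, I would start from Proposition~\ref{P:z-X_0}, which gives $(z,X_0)\in\bB^{\phi,\nabla}(G,\cR)$, and apply Lemma~\ref{L:push} to obtain $\bigl([d_g]_*(z),X_0\bigr)\in\bB^{\phi^{d_g},\nabla}(G,\cR)$. To pass to the twist by $c$, I would use two observations. First, $\pi\in K^\times$ is $\partial$-closed, so $\dlog(c)=0$; combined with the standard identity $\dlog(xy)=\dlog(x)+\Ad(x)(\dlog(y))$, this reduces $\Gamma_{c\cdot[d_g]_*(z)}$ to $\Ad(c)\circ\Gamma_{[d_g]_*(z)}$. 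Second, $\Ad(c)(X_0)=X_0$ in $\fg_\cR$: by construction $\Theta_{X_0}=\Theta_X'=\soplus_i\Theta_{X,\mu_i}$, so for every $(V,\rho_V)\in\Rep_F(G)$ the endomorphism $\Lie(\rho_V)(X_0)$ preserves each graded piece $V_{\cR,\mu_i}$, while $\rho_V(c)$ acts on $V_{\cR,\mu_i}$ as the scalar $\pi^{-d_g\mu_i}$ (recall the example after Construction~\ref{C:slope-morphism}); scalar conjugation is trivial, so the equality $\rho_V(c)\Lie(\rho_V)(X_0)\rho_V(c)^{-1}=\Lie(\rho_V)(X_0)$ holds for all $\rho_V$, and Corollary~\ref{C:Lie-tannakian} (applied to a faithful representation) promotes this to the desired identity in $\fg_\cR$. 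Combining these two observations gives
\[\Gamma_{c[d_g]_*(z)}\!\bigl(\muu(\phi^{d_g},t)\phi^{d_g}(X_0)\bigr)=\Ad(c)\!\bigl(\Gamma_{[d_g]_*(z)}\!\bigl(\muu(\phi^{d_g},t)\phi^{d_g}(X_0)\bigr)\bigr)=\Ad(c)(X_0)=X_0,\]
as required.

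For the unit-root property, I would argue representation by representation. Fix $V\in\Rep_F(G)$ with splitting $V_\cR=\soplus_{i=1}^{l}V_{\cR,\mu_i}$ coming from $\xi_g$. By Proposition~\ref{P:z} and Construction~\ref{C:Phi'}, $\Phi_z$ preserves this decomposition and the restriction $\Phi_{z,\mu_i}=\Phi_{g,\mu_i}$ makes $(V_{\cR,\mu_i},\Phi_{z,\mu_i})$ pure of slope $\mu_i$. Lemma~\ref{L:push-slope} then promotes each summand to a pure $\phi^{d_g}$-module of integer slope $d_g\mu_i$. Since $c$ acts on $V_{\cR,\mu_i}$ by the scalar $\pi^{-d_g\mu_i}$, it also preserves the decomposition, and the restriction of $c\cdot[d_g]_*(z)\phi^{d_g}$ to $V_{\cR,\mu_i}$ is precisely the $(-d_g\mu_i)$-twist of a pure module of slope $d_g\mu_i$, which is unit-root by the very definition of purity. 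A direct sum of unit-root modules is unit-root, so the full $\phi^{d_g}$-module is unit-root, completing the proof once the two points above are assembled.

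The main obstacle, and essentially the only non-formal step, is the equality $\Ad(c)(X_0)=X_0$: it is where the compatibility between the splitting $\xi_g$, the slope morphism $\lambda_g$, and the ``diagonalized'' differential $X_0$ all come together. Everything else reduces to pushforward/twist bookkeeping on graded pieces and to the elementary identity $\dlog(c)=0$ for the constant element $c$.
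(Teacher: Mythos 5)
Your overall strategy matches the paper's: verify the gauge compatibility so the functor lands in $G$-$(\phi^{d_g},\nabla)$-modules, and verify unit-rootness representation by representation. The unit-root half of your argument is fine, modulo the small point that Lemma~\ref{L:push-slope} is stated for $\phi$-modules of the form $(V_R,g\phi)$ while you invoke it on the summands $(V_{\cR,\mu_i},\Phi_{z,\mu_i})$; the underlying slope fact from Kedlaya's theory is of course general, so this is harmless, but one can also apply the lemma to the full $V_\cR$ as the paper does.

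The gauge compatibility half, however, has a genuine gap. You break $\Gamma_c(X_0)=X_0$ into the two sub-claims $\Ad(c)(X_0)=X_0$ and $\dlog(c)=0$, the former derived from the assertion that the $\cR$-linear endomorphism $\Lie(\rho_V)(X_0)$ preserves each graded piece $V_{\cR,\mu_i}$. That assertion is unjustified and in fact false in general. What preserves the grading by construction is the \emph{differential operator} $\Theta_{X_0}=\Theta_X'$; the linearization $\Lie(\rho_V)(X_0)=\Theta_{X_0}-(\Id_V\otimes\partial)$ need not, because $\Id_V\otimes\partial$ is defined via the $F$-structure on $V$ and has no reason to respect the $\cR$-module grading coming from the splitting. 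Concretely, if $\overline N$ is the block-diagonal matrix of $\Theta_{X_0}$ in the graded basis $(\bv_j)=(e_i\otimes 1)U$, the matrix of $\Lie(\rho_V)(X_0)$ in that same basis is $\overline N-U^{-1}\partial(U)$, and $U^{-1}\partial(U)$ is not block-diagonal in general. As a consequence, conjugation by the block-scalar matrix $D=\rho_V(c)$ does \emph{not} fix $\Lie(\rho_V)(X_0)$, and $\dlog(\rho_V(c))=(1-\Ad(\rho_V(c)))\bigl(\partial(U)U^{-1}\bigr)$ is not zero either. (The hand-wave that ``$\pi$ is $\partial$-closed'' conflates constancy of $\pi^{-1}\in\GG_m(K)$ with constancy of $c=\lambda_g(\pi^{-1})$; since $\lambda_g$ is only a morphism of $\cR$-groups and $\hat\partial$ is not $\cR$-linear, $\dlog$ does not transport along $\lambda_g$.) What is true is only the combination $\Gamma_c(X_0)=\Ad(c)(X_0)-\dlog(c)=X_0$, and the correct way to see it is the paper's one-line observation: the differential operator $\Theta_{X_0}$ is $K$-linear and preserves the grading, while $\rho_V(c)$ acts by the $K$-scalar $\pi^{-d_g\mu_i}$ on each $V_{\cR,\mu_i}$, so $\Theta_{X_0}\circ\rho_V(c)=\rho_V(c)\circ\Theta_{X_0}$, which is exactly $\Gamma_c(X_0)=X_0$ after passing through a faithful representation. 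Your cocycle identity $\Gamma_{c[d_g]_*(z)}=\Gamma_c\circ\Gamma_{[d_g]_*(z)}$ then finishes the argument. So the fix is to prove $\Gamma_c(X_0)=X_0$ directly at the level of the differential operator $\Theta_{X_0}$, rather than trying to split it into two Lie-algebra identities which do not hold separately.
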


\begin{proof}
For any $V\in \Rep_F(G)$, it suffices to show that $(V_\cR,[d_g]_*(z)\phi^{d_g},\nabla_{X_0})$ is unit-root. By Proposition~\ref{P:z-X_0} and Lemma~\ref{L:push}, $(V_\cR,[d_g]_*(z)\phi^{d_g},\nabla_{X_0})$ is a $(\phi^{d_g},\nabla)$-module over $\cR$. Equivalently, we have $\Theta_{X_0}\circ \Phi_z^{d_g}= \muu\cdot \Phi_z^{d_g}\circ \Theta_{X_0}$. Suppose that $(V_\cR,g\phi)$ has jumps $\mu_1,\cdots,\mu_l$, then $\big( V_\cR,[d_g]_*(z)\phi^{d_g} \big)$ has jumps $d_g\mu_1,\cdots,d_g\mu_l$ by Lemma~\ref{L:push-slope}. For any $1\leq i \leq l$, $\rho(\lambda_g(\pi^{-1}))$ acts via multiplication by $\pi^{-d_g\mu_i}\in K$ on the graded-piece $V_{\cR,\mu_i}$, which implies that $\big( V_{\cR,\mu_i},\lambda_g(\pi^{-1})[d_g]_*(z)\phi^{d_g} \big)$ is unit-root. It follows from~\cite[Proposition 4.6.3 (a)]{ked-revisit} that $\big( V_\cR,\lambda_g(\pi^{-1})[d_g]_*(z)\phi^{d_g} \big)$ is unit-root. Moreover, since $\Theta_{X_0}$ is $K$-linear, we have
\[\Theta_{X_0}\circ \rho(\lambda_g(\pi^{-1}))\circ \Phi_z^{d_g}= \rho(\lambda_g(\pi^{-1}))\circ \Theta_{X_0}\circ \Phi_z^{d_g}= \muu\cdot \rho(\lambda_g(\pi^{-1}))\circ \Phi_z^{d_g}\circ \Theta_{X_0},\]
which completes the proof.

\end{proof}

\subsection{A $G$-version of the $p$-adic local monodromy theorem}

Let $L$ be a finite separable extension of $\kk \lb t \rb$, and let $\cE^\dagger_L$ be the unique unramified extension of $\cE^\dagger$ with residue field $L$. We put $\cR_L \coloneqq \cR\stens_{\cE^\dagger} \cE^\dagger_L$.

We put
\[\cE^{\dagger,\nr}\coloneqq \varinjlim_L \cE^\dagger_L, \;\;\;\text{and} \;\;\; \cB_0\coloneqq \varinjlim_L \cR_L \cong \cR\stens_{\cE^\dagger} \cE^{\dagger,\nr},
\]
where $L$ runs through all finite separable extensions of $\kk \lb t \rb$. In fact, $\cE^{\dagger,\nr}$ is the maximal unramified extension of $\cE^\dagger$ with residue field $\kk \lb t \rb^{\sep}$, the separable closure of $\kk  \lb t \rb$.

The main result of this paper is:

\begin{thm}\label{T:G-mono}
Let $G$ be a connected reductive $F$-group and let $(g,X)\in \mathbf B^{\phi,\nabla}(G,\cR)$. Then there exists a finite separable extension $L$ of $\kk \lb t \rb$ and an element $b\in G(\cR_L)$ such that $\Gamma_b(X)\in \Lie \big(U_{G_\cR}(-\lambda_g)\big)_{\cR_L}$.
\end{thm}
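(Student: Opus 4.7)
The plan is to execute the reduction-to-unit-root strategy sketched in the introduction and then invoke the classical $p$LMT of Tsuzuki together with a tannakian packaging. First, by Theorem~\ref{T:Q-splitting} choose a splitting $\xi_g$ of the $\QQ$-filtered fiber functor $\HN_g$, and form the pair $(z,X_0)\in\bB^{\phi,\nabla}(G,\cR)$ produced by Propositions~\ref{P:z}, \ref{P:X_0}, and~\ref{P:z-X_0}. By Construction~\ref{C:Phi'}, on every $(V,\rho)\in\Rep_F(G)$ the element $z$ (resp.\ $X_0$) is exactly the block-diagonal part of $\rho(g)$ (resp.\ of the $\cR$-linear piece of $\Theta_X$) with respect to the slope decomposition cut out by $\xi_g$. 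Reading this off on a faithful representation yields $z\in Z_{G_\cR}(\lambda_g)(\cR)$ and $X_0\in\Lie\bigl(Z_{G_\cR}(\lambda_g)\bigr)_\cR$. Moreover, because the slope filtration is preserved by both $g\phi$ and $\nabla_X$, the matrices of $\rho(g)$ and $\Theta_X$ are block-\emph{upper}-triangular in the basis adapted to $\xi_g$, so the remainder $X-X_0$ is strictly block-upper-triangular, i.e.\ $X-X_0\in\Lie\bigl(U_{G_\cR}(-\lambda_g)\bigr)_\cR$ (exactly as in the matrix manipulation in the proof of Proposition~\ref{P:z-X_0}).

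Next, by Corollary~\ref{C:unit-root} the twisted pushforward $\I\bigl(\lambda_g(\pi^{-1})[d_g]_*(z),X_0\bigr)$ is a unit-root $G$-$(\phi^{d_g},\nabla)$-module over $\cR$; its Frobenius element still lies in $Z_{G_\cR}(\lambda_g)(\cR)$, because the $\phi$-iterates of $z$ remain block-diagonal in the fixed basis (as $\phi$ acts entrywise on matrices) and $\lambda_g(\pi^{-1})$ centralizes $\lambda_g$. Fix a faithful $W\in\Rep_F(G)$. Tsuzuki's unit-root $p$LMT furnishes a finite separable extension $L$ of $\kk\lb t\rb$ over which $\nabla_{X_0}$ on $\I(W)\stens_\cR\cR_L$ becomes trivial; since $W$ is a tensor generator of $\Rep_F(G)$, the same $L$ simultaneously trivializes the connection on $\I(V)\stens_\cR\cR_L$ for every $V\in\Rep_F(G)$, compatibly with tensor products and morphisms. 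Packaging these trivializations through Corollary~\ref{C:Lie-tannakian} (applied to the system of horizontal sections) delivers an element $c\in G(\cR_L)$ with $\Gamma_c(X_0)=0$; and since $X_0$ respects the $\lambda_g$-grading, so do its horizontal sections, so this $c$ can be taken inside $Z_{G_\cR}(\lambda_g)(\cR_L)$.

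Setting $b:=c$ concludes the proof: because $c\in Z_{G_\cR}(\lambda_g)\subseteq P_{G_\cR}(-\lambda_g)$ normalizes $U_{G_\cR}(-\lambda_g)$, the adjoint action $\Ad(c)$ preserves $\Lie\bigl(U_{G_\cR}(-\lambda_g)\bigr)$, and therefore
\[
\Gamma_b(X)=\Gamma_c(X_0)+\Ad(c)(X-X_0)=\Ad(c)(X-X_0)\in \Lie\bigl(U_{G_\cR}(-\lambda_g)\bigr)\stens_\cR\cR_L,
\]
as required. The delicate step I anticipate as the main obstacle is the Levi-confinement of $c$: one must upgrade Tsuzuki's unit-root $p$LMT to the $G$-setting in a manner that tracks the $\lambda_g$-grading, equivalently, run the tannakian argument inside the category of unit-root $Z_{G_\cR}(\lambda_g)$-valued objects rather than of $G$-valued ones. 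Since for the final computation one really only needs $c\in P_{G_\cR}(-\lambda_g)(\cR_L)$, it should also suffice to observe that any unit-root trivialization automatically preserves the slope filtration (which is built into the definition of $X_0$ via $\xi_g$), giving a slightly more lenient route to the same conclusion.
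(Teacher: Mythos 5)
Your outline tracks the paper closely up to the moment you need to produce the element $c\in G(\cR_L)$ with $\Gamma_c(X_0)=0$, and that is exactly where the proposal breaks down. The step you summarize as ``packaging these trivializations through Corollary~\ref{C:Lie-tannakian}\ (applied to the system of horizontal sections)'' does not give what you need, for two reasons. First, Corollary~\ref{C:Lie-tannakian} is the infinitesimal reconstruction result: it produces elements of $\fg_R$, not of $G(R)$, so it cannot possibly ``deliver an element $c\in G(\cR_L)$''. Second, and more fundamentally, what the unit-root $p$LMT hands you is, for each $V$, a $K^{\nr}$-space of horizontal sections $(V_{\cB_0})^{\Theta_{X_0}=0}$ of the right dimension; these assemble into a \emph{second fiber functor} $\omega_2\colon\Rep_F(G)\to\Vect_{K^{\nr}}$, but nothing in the unit-root $p$LMT says that you may choose bases of horizontal sections compatibly across all $V$, across morphisms, and across tensor products. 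That compatibility is exactly the statement that the $G_{K^{\nr}}$-torsor $\uIsom^\otimes(\omega^G\otimes K^{\nr},\omega_2)$ is trivial, and this is not automatic. The paper deduces it from Steinberg's theorem, $H^1(K^{\nr},G)=1$, which applies because $K^{\nr}$ has cohomological dimension $\leq 1$ and $G$ is connected reductive. Notice that this is the only place in the argument where connected reductivity of $G$ is used; your proposal never invokes this hypothesis, which is a warning sign that something essential is missing.

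A smaller point: the ``main obstacle'' you flag — confining $c$ to the Levi $Z_{G_\cR}(\lambda_g)$ — is actually not needed. Once $c$ comes from a change of basis into horizontal sections, each new basis vector lies in $V_{\cR,\mu_i}\stens_\cR\cR_L$ for the appropriate $i$ (because the solution space of $\nabla_{X_0}$ on $\cB_0$ respects the grading cut out by $\xi_g$), so the transition matrix is automatically block-diagonal and $\Ad(c)$ preserves $\Lie(U_{G_\cR}(-\lambda_g))_{\cR_L}$. The paper does not upgrade the unit-root theorem to a Levi-valued version; it just reads off this block-diagonality from the construction of the $\bw^{(i)}_j$. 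So the route you worry about being ``more lenient'' is in fact the route the paper takes, but it still passes through the Steinberg/torsor step, which is the part you would need to supply.
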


We will make use of the following lemma, which is often mentioned as Steinberg's theorem. The theory of fields of cohomological dimension $\leq 1$ can be found in, e.g.,~\cite[II. \S 3]{ga-co}; for us, the most important example will be a henselian discretely valued field of characteristic $0$ with algebraically closed residue field (see~\cite[II. \S 3.3]{ga-co}).

\begin{lem}[{\cite[Theorem 1.9]{steinberg}}] \label{L:steinberg}
Suppose that $k$ is a field of cohomological dimension $\leq 1$ and $\fG$ is a connected reductive $k$-group, then have $H^1(k,\fG)=1$.
\end{lem}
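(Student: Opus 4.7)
The plan is to reduce Theorem~\ref{T:G-mono} to Tsuzuki's unit-root $p$-adic local monodromy theorem using the slope morphism machinery of Section~\ref{S:G-phi}, and to assemble the resulting representation-wise trivializations into a single element of $G(\cR_L)$ via a tannakian argument.

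First I would fix a splitting $\xi_g$ of the $\QQ$-filtered fiber functor $\HN_g$ (available by Theorem~\ref{T:Q-splitting}), producing elements $z \in G(\cR)$ and $X_0 \in \fg_\cR$ as in Propositions~\ref{P:z}, \ref{P:X_0}, and~\ref{P:z-X_0}; set $X_+ \coloneqq X - X_0$. Because the slope filtration refines to a filtration by sub-$(\phi,\nabla)$-modules, $\Theta_X$ preserves the filtration while $\Theta_{X_0}$ is the associated graded operator, so via any faithful representation we have $X_0 \in \Lie\big(Z_{G_\cR}(\lambda_g)\big)_\cR$ and $X_+ \in \Lie\big(U_{G_\cR}(-\lambda_g)\big)_\cR$. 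Corollary~\ref{C:unit-root} then asserts that the pair $\big(\lambda_g(\pi^{-1})[d_g]_*(z), X_0\big)$ defines a \emph{unit-root} $G$-$(\phi^{d_g},\nabla)$-module over $\cR$.

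Next I would apply Tsuzuki's unit-root $p$LMT. Since $G$ is connected reductive, it admits a faithful representation $V_0$ that tensor-generates $\Rep_F(G)$; Tsuzuki's theorem provides a finite separable extension $L/\kk\lb t\rb$ over which the underlying unit-root $(\phi^{d_g},\nabla)$-module on $V_0$ becomes constant, and by tensor generation the same $L$ simultaneously trivializes $\I'(V)$ for every $V \in \Rep_F(G)$. Passing to horizontal sections yields an exact faithful $F$-linear tensor functor $\eta^G \colon \Rep_F(G) \to \Vect_{K_L}$ (with $K_L$ the field of constants of $\cR_L$) together with a tensor isomorphism $\omega^G \otimes_F \cR_L \simeq \eta^G \otimes_{K_L} \cR_L$. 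Corollary~\ref{C:fibre-iso} combined with Steinberg's theorem (Lemma~\ref{L:steinberg})---applied after the admissible extension of Lemma~\ref{L:admissible} so that residue fields become strongly difference-closed and hence of cohomological dimension $\leq 1$---identifies $\eta^G$ with $\omega^G \otimes_F K_L$ as tensor functors over $K_L$. Composing and invoking the tannakian duality $\uAut^\otimes(\omega^G \otimes \cR_L) = G(\cR_L)$ of Corollary~\ref{C:recover} extracts an element $b \in G(\cR_L)$. The grading-compatible nature of the horizontal-section construction moreover ensures that $b$ preserves the slope decomposition, i.e.\ $b \in Z_{G_\cR}(\lambda_g)(\cR_L)$.

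Finally, since the horizontal sections trivialize $\nabla_{X_0}$, this $b$ satisfies $\Gamma_b(X_0)=0$. As $b \in Z_{G_\cR}(\lambda_g) \subseteq P_{G_\cR}(-\lambda_g)$ and the latter parabolic normalizes its unipotent radical, $\Ad(b)$ preserves $\Lie\big(U_{G_\cR}(-\lambda_g)\big)$, whence
\[
\Gamma_b(X) = \Gamma_b(X_0) + \Ad(b)(X_+) = \Ad(b)(X_+) \in \Lie\big(U_{G_\cR}(-\lambda_g)\big) \otimes_\cR \cR_L,
\]
as required. The main obstacle I anticipate lies in the tannakian extraction of $b$: producing the required triviality of a $G$-torsor calls for $H^1$-vanishing over $K_L$ (which Steinberg gives only after enlarging residue fields via an admissible extension), and then descending the resulting element of $G(\cR_{L,E})$ back to $G(\cR_L)$ by a faithfully flat argument in the spirit of Lemma~\ref{L:faithful}. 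Subsidiary technical care will be needed to confirm that this descended $b$ indeed lies in the Levi $Z_{G_\cR}(\lambda_g)$ rather than merely in $P_{G_\cR}(-\lambda_g)$, but this should follow from the fact that the unit-root data $\big(\lambda_g(\pi^{-1})[d_g]_*(z),X_0\big)$ is itself graded by the construction of the slope morphism.
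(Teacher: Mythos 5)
Your proposal does not address the statement it was supposed to prove. The statement here is Lemma~\ref{L:steinberg}: for a field $k$ of cohomological dimension $\leq 1$ and a connected reductive $k$-group $\fG$, one has $H^1(k,\fG)=1$. This is Steinberg's theorem (the resolution of Serre's Conjecture~I), and in the paper it is not proved at all but simply cited from Steinberg's work; any honest treatment would either do the same or reproduce Steinberg's argument, which is pure structure theory of reductive groups (analysis of regular elements and quasi-split forms, reduction to tori and simply connected groups, twisting in Galois cohomology) and has nothing to do with Robba rings, slope filtrations, or $(\phi,\nabla)$-modules.

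What you have written instead is a proof sketch of Theorem~\ref{T:G-mono}, the $G$-version of the $p$-adic local monodromy theorem. Worse, your sketch explicitly \emph{invokes} Lemma~\ref{L:steinberg} (``Corollary~\ref{C:fibre-iso} combined with Steinberg's theorem (Lemma~\ref{L:steinberg}) \ldots''), so read as a proof of the lemma it is circular, and read as a proof of the main theorem it answers a question that was not asked. There is no overlap between your argument and the content of the statement: nothing in your text bears on why $H^1(k,\fG)$ vanishes when $\mathrm{cd}(k)\leq 1$. To repair this you should simply cite Steinberg (as the paper does), noting if you wish that the relevant field in the application, $K^{\nr}$, is henselian discretely valued of characteristic $0$ with algebraically closed residue field and hence of cohomological dimension $\leq 1$; but that observation belongs to the proof of Theorem~\ref{T:G-mono}, not to the lemma itself.
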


We also recall that the formations of the subgroups given in Remark~\ref{R:para} commute with base extension.

\begin{proof}[Proof of Theorem~\ref{T:G-mono}]
Let $z\in G(\cR)$ and $X_0\in \fg_\cR$ be the unique elements given by Proposition~\ref{P:z} and Proposition~\ref{P:X_0}, respectively.

Let $(V,\rho)$ be a $d$-dimensional $G$-representation (not necessarily faithful). Suppose the slope filtration of $(V_\cR,g\phi)$ has jumps $\mu_1,\cdots,\mu_l$. Suppose that $\xi_g(V)=\bigoplus\limits_{i=1}^l V_{\cR,\mu_i}$, we put $d_i\coloneqq \rk_\cR (V_{\cR,\mu_i})$ for all $i$. In the proof of Corollary~\ref{C:unit-root} we see that $\big(V_{\cR,\mu_i},\lambda_g(\pi^{-1})[d_g]_*(z)\phi^{d_g},\nabla_{X_0} \big)$ is a unit-root $(\phi,\nabla)$-module over $\cR$ for all $1\leq i\leq l$. Let $\Phi_z=z\phi$ and let $\Theta_{X_0}\colon V_\cR\to V_\cR$ be the differential operator associated to $\nabla_{X_0}$. Then $\Phi_z$ (resp.\ $\Theta_{X_0}$) may be extended to $V\stens_F \cB_0$, which is still denoted by $\Phi_z$ (resp.\ $\Theta_{X_0}$). By the unit-root $p$-adic local monodromy theorem~\cite[Theorem 6.11]{ked-ann}, we find:
\begin{itemize}
\item[(i)] a finite separable extension $L(V)$ of $\kk \lb t \rb$;
\item[(ii)] for each $1\leq i\leq l$ a basis $\bw_1^{(i)},\cdots,\bw_{d_i}^{(i)}$ for $V_{\cR,\mu_i}\bigotimes_\cR \cR_{L(V)}$ over $\cR_{L(V)}$ such that $\Theta_{X_0}(\bw_j^{(i)})=0$ for all $1\leq j\leq d_i$.
\end{itemize}
Then, for each $1\leq i\leq l$, we have that
\[W_i\coloneqq (V_{\cR,\mu_i}\stens\limits_\cR \cB_0)^{\Theta_{X_0}=0}= \big\{ x\in V_{\cR,\mu_i}\stens\limits_\cR \cB_0 \mid \Theta_{X_0}(x)=0 \big\}\]
is a $d_i$-dimensional $K^{\nr}$-vector space spanned by $\bw_1^{(i)},\cdots,\bw_{d_i}^{(i)}$. In particular, we have
\[  (V_{\cB_0} )^{\Theta_{X_0}=0}= \big\{ x\in V_{\cB_0} \mid \Theta_{X_0}(x)=0 \big\}=\soplus\limits_{i=1}^l W_i,\]
which is a $d_i$-dimensional $K^{\nr}$-vector space.

We now have two $K^{\nr}$-valued fiber functors
\[\omega_1=\omega^G \otimes K^{\nr}\colon \Rep_F(G) \longto \Vect_{K^{\nr}},\;\;\;\; V \longmapsto V\otimes K^{\nr},\]
and
\[\omega_2 \colon \Rep_F(G) \longto \Vect_{K^{\nr}},\;\;\;\; V\longmapsto (V_{\cB_0} )^{\Theta_{X_0}=0}.\]
Moreover, we have an action
\[\uIsom^\otimes(\omega_1,\omega_2) \times \uAut^\otimes(\omega_1)\longto \uIsom^\otimes(\omega_1,\omega_2)\]
of $\uAut^\otimes(\omega_1)$ on $\uIsom^\otimes(\omega_1,\omega_2)$, given by pre-composition. We note that $\uAut^\otimes(\omega_1)=\uAut^\otimes(\omega^G\otimes K^{\nr})\cong G_{K^{\nr}}$,\footnote{For this isomorphism, we refer to the discussion above Proposition~\ref{P:Z-splitting}.} so $\uIsom^\otimes(\omega_1,\omega_2)$ may be viewed as a $G_{K^{\nr}}$-torsor over $K^{\nr}$. By Lemma~\ref{L:steinberg}, we have $H^1(K^{\nr},G_{K^{\nr}})=1$. Thus, $\uIsom^\otimes(\omega_1,\omega_2)$ is isomorphic to the trivial $G_{K^{\nr}}$-torsor over $K^{\nr}$, i.e., we have $\uIsom^\otimes(\omega_1,\omega_2)_{K^{\nr}} \cong G_{K^{\nr}}$. 

On the other hand, we have an isomorphism $\gamma\colon \omega_2\otimes \cB_0 \to \omega_1 \otimes \cB_0$ of tensor functors, induced by the $\cB_0$-linear extension of the inclusion
\[\begin{tikzcd}
(V_{\cB_0})^{\Theta_{X_0}=0} \ar[r,hook] &V_{\cB_0}
\end{tikzcd}\]
for all $(V,\rho)\in\Rep_F(G)$. We now fix $\beta\in\uIsom^\otimes(\omega_1,\omega_2)(K^{\nr})$, we then have an element $\tilde \beta\coloneqq \gamma\circ \beta_{\cB_0} \in \uAut^\otimes(\omega_1\otimes \cB_0)(\cB_0)=G_{\cB_0}$. Let $b\in G(\cB_0)$ be the inverse of the image of $\tilde \beta$ under the following isomorphism
\[\uAut^\otimes(\omega_1\otimes \cB_0)(\cB_0) \longto G_{\cB_0}(\cB_0)=G(\cB_0).\]
Since $F[G]$ is finitely presented over $F$, the functor $\Hom_{\Alg_F}(F[G],\func)$ commutes with colimits. We have
\[G(\cB_0)=G(\varinjlim_L \cR_L)=\varinjlim_L G(\cR_L),\]
where $L$ runs over all finite separable extensions of $\kk \lb t \rb$, we thus find a finite separable extension $L$ of $\kk \lb t \rb$ such that $b\in G(\cR_L)$.

For any $(V,\rho)\in \Rep_F(G)$, it follows from the construction of $b$ that the automorphism $\rho(b^{-1})\colon V_{\cB_0} \to V_{\cB_0}$ factors through $(V_{\cB_0})^{\Theta_{X_0}=0} \otimes \cB_0$. Notice that $\Theta_{X_0}$ and $X_0$ agree on $\omega_1(V)=V_{K^{\nr}}$. Therefore, we have
\begin{align}\label{E:X_0}
\rho(b)X_0\rho(b^{-1})-\partial(\rho(b))\rho(b^{-1})=0.
\end{align}

We now fix a faithful representation $(V,\rho)$. The equality~\eqref{E:X_0} then implies
\[\Gamma_{b}(X_0)=0.\]
Put $X_1\coloneqq X-X_0\in \fg_\cR$, we then have
\begin{align*}
\Gamma_{b}(X) &=\Ad(b)(X_0+X_1)-\dlog(b)\\
&= \Ad(b)(X_0)-\dlog(b)+\Ad(b)(X_1)\\
&=\Gamma_{b}(X_0)+\Ad(b)(X_1)\\
&=\Ad(b)(X_1).
\end{align*}
Conserving the notation as in the second paragraph, $\Theta_X=\rho(b) X_1 \rho(b^{-1})$ acts in the basis $\bw_1^{(1)},\cdots,\bw_{d_1}^{(1)},\cdots,\bw_1^{(l)},\cdots,\bw_{d_l}^{(l)}$ via a matrix of the form
\begin{equation*}\footnotesize
\begin{pmatrix}
 0  & &\\   
&& 0  &~~~~\Ast \\
 &&& \ddots  \\
&&& &    0
\end{pmatrix}\in\Mat_{d,d}(\cR_L).
\end{equation*}
Here, the $i$-th $0$ in the diagonal denotes the zero matrix of size $d_i\times d_i$. Hence, $\Gamma_{b}(X)\in \Lie \big(U_{G_{\cR_L}}(-\lambda_{g,\cR_L})\big)=\Lie\big( U_{G_\cR}(-\lambda_g)_{\cR_L} \big)=\Lie \big(U_{G_\cR}(-\lambda_g)\big)_{\cR_L}$, as desired.

\end{proof}

\bibliography{G-phi-nabla}
\bibliographystyle{plain}

\par
\medskip
\begin{tabular}{@{}l@{}}
	\textsc{Department of Mathematics, East China Normal University,} \\
	\textsc{500 Dongchuan Road, Shanghai, 200241 P.R.\ China} \\[1.5pt]
	\textit{E-mail address}: \texttt{syye@math.ecnu.edu.cn}
\end{tabular}

\end{document}